\newtheorem{theorem}{Theorem}
\newtheorem{lemma}[theorem]{Lemma}
\newtheorem{remark}[theorem]{Remark}
\newtheorem{proposition}[theorem]{Proposition}
\newtheorem{definition}[theorem]{Definition}
\newtheorem{corollary}[theorem]{Corollary}
\newcommand{\rd}{\,\mathrm{d}}
\newcommand{\rtr}{\,\mathrm{tr}}
\newcommand{\bsk}{\boldsymbol{k}}
\newcommand{\bsl}{\boldsymbol{l}}
\newcommand{\bsq}{\boldsymbol{q}}
\newcommand{\bsx}{\boldsymbol{x}}
\newcommand{\bsy}{\boldsymbol{y}}
\newcommand{\bsz}{\boldsymbol{z}}
\newcommand{\bsalpha}{\boldsymbol{\alpha}}
\newcommand{\bsgamma}{\boldsymbol{\gamma}}
\newcommand{\bssigma}{\boldsymbol{\sigma}}
\newcommand{\bstau}{\boldsymbol{\tau}}
\newcommand{\bszero}{\boldsymbol{0}}
\newcommand{\nat}{\mathbb{N}}
\newcommand{\FF}{\mathbb{F}}
\newcommand{\RR}{\mathbb{R}}
\newcommand{\ZZ}{\mathbb{Z}}
\newcommand{\Dcal}{\mathcal{D}}
\newcommand{\Ecal}{\mathcal{E}}
\newcommand{\Hcal}{\mathcal{H}}
\newcommand{\Kcal}{\mathcal{K}}
\newcommand{\Lcal}{\mathcal{L}}
\newcommand{\sh}{\mathrm{sh}}
\newcommand{\tr}{\mathrm{tr}}
\newcommand{\wal}{\mathrm{wal}}
\begin{document}

\title{The $b$-adic tent transformation for quasi-Monte Carlo integration using digital nets\thanks{The work of the first author was supported by Grant-in-Aid for JSPS Fellows No.24-4020. The works of the second and third authors were supported by the Program for Leading Graduate Schools, MEXT, Japan.}}

\author{Takashi Goda\thanks{Graduate School of Engineering, The University of Tokyo, 7-3-1 Hongo, Bunkyo-ku, Tokyo 113-8656 ({\tt goda@frcer.t.u-tokyo.ac.jp}).}, Kosuke Suzuki\thanks{Graduate School of Mathematical Sciences, The University of Tokyo, 3-8-1 Komaba, Meguro-ku, Tokyo 153-8914 ({\tt ksuzuki@ms.u-tokyo.ac.jp}).}, Takehito Yoshiki\thanks{Graduate School of Mathematical Sciences, The University of Tokyo, 3-8-1 Komaba, Meguro-ku, Tokyo 153-8914 ({\tt yosiki@ms.u-tokyo.ac.jp}).}}

\date{\today}

\maketitle

\begin{abstract}
In this paper we investigate quasi-Monte Carlo (QMC) integration using digital nets over $\ZZ_b$ in reproducing kernel Hilbert spaces. The tent transformation, or the baker's transformation, was originally used for lattice rules by Hickernell (2002) to achieve higher order convergence of the integration error for smooth non-periodic integrands, and later, has been successfully applied to digital nets over $\ZZ_2$ by Cristea et al. (2007) and Goda (2014). The aim of this paper is to generalize the latter two results to digital nets over $\ZZ_b$ for an arbitrary prime $b$. For this purpose, we introduce the {\em $b$-adic tent transformation} for an arbitrary positive integer $b$ greater than 1, which is a generalization of the original (dyadic) tent transformation. Further, again for an arbitrary positive integer $b$ greater than 1, we analyze the mean square worst-case error of QMC rules using digital nets over $\ZZ_b$ which are randomly digitally shifted and then folded using the $b$-adic tent transformation in reproducing kernel Hilbert spaces. Using this result, for a prime $b$, we prove the existence of good higher order polynomial lattice rules over $\ZZ_b$ among the smaller number of candidates as compared to the result by Dick and Pillichshammer (2007), which achieve almost the optimal convergence rate of the mean square worst-case error in unanchored Sobolev spaces of smoothness of arbitrary high order.
\end{abstract}
{\em Keywords}:\; Quasi-Monte Carlo, numerical integration, digital nets, tent transformation, higher order polynomial lattice point sets\\
{\em MSC classifications}:\; 65C05, 65D30, 65D32

%%%%%%%%%%%%%%%%%%%%%%%%%%%%%%%%%%%%%%%%%%%%%%%%%%%%%%%%%%%
\section{Introduction}\label{sec:intro}
In this paper we are interested in approximating multivariate integrals of functions defined over the $s$-dimensional unit cube
  \begin{align*}
     I(f):=\int_{[0,1]^s}f(\bsx)\rd \bsx ,
  \end{align*}
by quasi-Monte Carlo (QMC) rules
  \begin{align*}
     Q(f;P_{N,s}) := \frac{1}{N}\sum_{\bsx\in P_{N,s}}f(\bsx) ,
  \end{align*}
where $P_{N,s}\subset [0,1]^s$ is a point set consisting of $N$ points. In order to obtain a small integration error, we need to choose $P_{N,s}$ carefully depending on the class of integrands under consideration. Two prominent ways to construct good point sets which are known are integration lattices, see, e.g., \cite{N92b,SJ94}, and digital nets and sequences, see, e.g., \cite{DP10,N92b}. QMC rules based on integration lattices are usually called lattice rules. In this paper, we focus on QMC rules using digital nets as point sets.

The typical convergence rate of the integration error using QMC rules is $O(N^{-1+\varepsilon})$ with arbitrarily small $\varepsilon >0$. In order to achieve higher order convergence of the integration error, it is of interest to study how to construct point sets which can exploit the smoothness of an integrand. It has long been known that it is possible to achieve higher order convergence for smooth periodic integrands by using lattice rules, whereas neither lattice rules nor QMC rules using digital nets can exploit the smoothness of non-periodic integrands so as to achieve higher order convergence. More recently, regarding QMC rules using digital nets, Dick \cite{D07,D08} analyzed the decay of Walsh coefficients of smooth periodic and non-periodic functions, respectively, and introduced higher order digital nets that can achieve higher order convergence. Higher order polynomial lattice point sets, which were first studied in \cite{DP07} by generalizing the definition of polynomial lattice point sets in \cite{N92a}, are one of the special examples of higher order digital nets. (In this paper, we shall use the word {\em digital nets} as a generic term that includes higher order digital nets.) Regarding lattice rules, on the other hand, the tent transformation, also known as the baker's transformation, was used by Hickernell \cite{H02} to achieve higher order convergence for non-periodic integrands in unanchored Sobolev spaces of smoothness of second order. Here we note that the tent transformation has been originally introduced and studied in the context of dynamical systems, see, e.g., \cite{R99}.

The tent transformation was later analyzed in the context of QMC rules using digital nets by Cristea et al. \cite{CDLP07}, where the tent transformation was successfully applied to randomly digitally shifted digital nets over $\ZZ_2$ to achieve almost the optimal convergence rate for integrands in unanchored Sobolev spaces of smoothness of second order. Their result has been generalized very recently by one of the authors \cite{Gxx} to unanchored Sobolev spaces of smoothness of arbitrary high order for the purpose of constructing good higher order polynomial lattice rules over $\ZZ_2$ with modulus of reduced degree as compared to \cite{BDGP11,DP07}. We refer to Subsection \ref{subsec:hopoly} for what {\em modulus} means here.

The aim of this paper is to further generalize the last two studies to QMC rules using digital nets over $\ZZ_b$, where $b$ is an arbitrary positive integer greater than 1. Since the original tent transformation no longer meets this purpose, we need to introduce the {\em $b$-adic tent transformation} ($b$-TT), which is a generalization of the original (dyadic) tent transformation. Another generalization of the tent transformation was studied in \cite{BFS14}, but is different from ours. Employing digital nets over $\ZZ_b$ that are randomly digitally shifted and then folded using the $b$-TT as point sets, the mean square worst-case error in reproducing kernel Hilbert spaces can be analyzed in a way analogous to \cite[Section~3]{CDLP07}. Using this result, for a prime $b$, we can prove the existence of good higher order polynomial lattice rules over $\ZZ_b$ with modulus of reduced degree as compared to \cite{BDGP11,DP07}, which achieve almost the optimal convergence rate of the mean square worst-case error in unanchored Sobolev spaces of smoothness of arbitrary high order, as shown in \cite[Section~4]{Gxx} for the case $b=2$. This means that we can find good higher order polynomial lattice rules among the smaller number of candidates. Hence it would be of interest to study how to construct such good rules efficiently in a manner similar to \cite[Section~6]{Gxx}, but answering this question is beyond the scope of this paper and will be discussed in \cite{GSYxx}.

The remainder of this paper is organized as follows. In Section \ref{sec:pre}, we recall the necessary background and notation, including Walsh functions, digital nets and higher order polynomial lattice point sets. In Section \ref{sec:bbt}, we introduce the $b$-TT and describe its properties that will be used in the subsequent analysis. We then investigate the mean square worst-case error of QMC rules using digital nets over $\ZZ_b$ that are randomly digitally shifted and then folded using the $b$-TT in reproducing kernel Hilbert spaces in Section \ref{sec:mse}. Finally, in Section \ref{sec:hopoly}, we consider unanchored Sobolev spaces of smoothness of arbitrary high order, and for a prime $b$, we prove the existence of good higher order polynomial lattice rules over $\ZZ_b$ with modulus of reduced degree, which achieve almost the optimal convergence rate of the mean square worst-case error.

%%%%%%%%%%%%%%%%%%%%%%%%%%%%%%%%%%%%%%%%%%%%%%%%%%%%%%%%%%%
\section{Preliminaries}\label{sec:pre}
Throughout this paper, we shall use the following notation. Let $\nat$ be the set of positive integers and let $\nat_0:=\nat\cup \{0\}$. For a positive integer $b\ge 2$, let $\ZZ_b$ be a finite ring with $b$ elements, which we identify with the set $\{0,1,\dots,b-1\}$ equipped with addition and multiplication modulo $b$. For $x\in [0,1]$, its $b$-adic expansion $x=\sum_{i=1}^{\infty}\xi_i b^{-i}$, with $\xi_i\in \ZZ_b$ for all $i$, is unique in the sense that infinitely many of the $\xi_i$ are different from $b-1$ if $x \neq 1$ and that all $\xi_i$ are equal to $b-1$ if $x = 1$. The operators $\oplus$ and $\ominus$ denote digitwise addition and subtraction modulo $b$, respectively. That is, for $x, x'\in [0,1]$ whose unique $b$-adic expansions are $x=\sum_{i=1}^{\infty}\xi_i b^{-i}$ and $x'=\sum_{i=1}^{\infty}\xi'_i b^{-i}$ with $\xi_i,\xi'_i\in \ZZ_b$ for all $i$, $\oplus$ and $\ominus$ are defined as
  \begin{align*}
    x\oplus x' = \sum_{i=1}^{\infty}\eta_i b^{-i}\quad \text{and}\quad x\ominus x' = \sum_{i=1}^{\infty}\eta'_i b^{-i},
  \end{align*}
where $\eta_i=\xi_i+\xi'_i \pmod b$ and $\eta'_i=\xi_i-\xi'_i \pmod b$, respectively. Similarly, we define digitwise addition and subtraction for non-negative integers based on their $b$-adic expansions. In case of vectors in $[0,1]^s$ or $\nat_0^s$, the operators $\oplus$ and $\ominus$ are applied componentwise.

\subsection{Walsh functions}\label{subsec:walsh}
Walsh functions often play a central role in the analysis of digital nets. We refer to \cite[Appendix~A]{DP10} for background on Walsh functions. We first give the definition for the one-dimensional case.

\begin{definition}\label{def:wal_1}
Let $b\ge 2$ be a positive integer and let $\omega_b:=\exp(2\pi \sqrt{-1}/b)$ be the primitive $b$-th root of unity. We denote the $b$-adic expansion of $k\in \nat_0$ by $k = \kappa_0+\kappa_1b+\dots+\kappa_{a-1}b^{a-1}$ with $\kappa_i\in \ZZ_b$. Then the $k$-th $b$-adic Walsh function ${}_b\wal_k: [0,1]\to \{1,\omega_b,\dots,\omega_b^{b-1}\}$ is defined as
  \begin{align*}
    {}_b\wal_k(x) := \omega_b^{\kappa_0\xi_1+\dots+\kappa_{a-1}\xi_a} ,
  \end{align*}
for $x\in [0,1]$ with its unique $b$-adic expansion $x=\xi_1b^{-1}+\xi_2b^{-2}+\cdots$.
\end{definition}

\begin{remark}
Walsh functions are usually defined on $[0,1)$. In Definition \ref{def:wal_1}, however, they are defined on $[0,1]$, since in the subsequent analysis we shall need to consider the values of Walsh functions for $x=\xi_1b^{-1}+\xi_2b^{-2}+\cdots$ such that all $\xi_i$ are equal to $b-1$. \end{remark}
\noindent
Definition \ref{def:wal_1} can be generalized to the higher-dimensional case.

\begin{definition}\label{def:wal_s}
Let $b\ge 2$ be a positive integer. For a dimension $s\in \nat$, let $\bsx=(x_1,\ldots, x_s)\in [0,1]^s$ and $\bsk=(k_1,\ldots, k_s)\in \nat_0^s$. Then the $\bsk$-th $b$-adic Walsh function ${}_b\wal_{\bsk}: [0,1]^s \to \{1,\omega_b,\ldots, \omega_b^{b-1}\}$ is defined as
  \begin{align*}
    {}_b\wal_{\bsk}(\bsx) := \prod_{j=1}^s {}_b\wal_{k_j}(x_j) .
  \end{align*}
\end{definition}

Since we shall always use Walsh functions in a fixed base $b$, we omit the subscript and simply write $\wal_k$ or $\wal_{\bsk}$ in the remainder of this paper. Following the exposition in \cite[Appendix~A.2]{DP10}, several important properties of Walsh functions and their related fact are summarized below. In the following, we call $x\in [0,1]$ a $b$-adic rational if $x$ is represented by a finite $b$-adic expansion. Here we note that $x=1$ is not a $b$-adic rational from our unique expansion.

\begin{proposition}\label{prop:walsh}
We have the following:
\begin{enumerate}
\item Let $k,l\in \nat_0$ and $x,y\in [0,1]$. If $x\oplus y$ is not a $b$-adic rational, we have
 $$
    \wal_k(x)\wal_l(x)=\wal_{k\oplus l}(x), \quad \wal_k(x)\wal_k(y)=\wal_k(x\oplus y) .
$$
If $x\ominus y$ is not a $b$-adic rational, we have
$$
    \wal_k(x)\overline{\wal_l(x)}=\wal_{k\ominus l}(x), \quad \wal_k(x)\overline{\wal_k(y)}=\wal_k(x\ominus y) .
$$
\item For $k\in \nat_0$, we have
  \begin{align*}
    \int_0^1 \wal_k(x)\rd x = \left\{ \begin{array}{ll}
     1 & \text{if}\quad k=0  ,  \\
     0 & \text{otherwise} .  \\
    \end{array} \right.
  \end{align*}
\item For all $\bsk,\bsl\in \nat_0^s$, we have
  \begin{align*}
    \int_{[0,1]^s} \wal_{\bsk}(\bsx)\overline{\wal_{\bsl}(\bsx)}\rd \bsx = \left\{ \begin{array}{ll}
     1 & \text{if}\quad \bsk=\bsl  ,  \\
     0 & \text{otherwise} .  \\
    \end{array} \right.
  \end{align*}
\item Let $\bssigma\in [0,1]^s$. Then for any $f\in \Lcal_2([0,1]^s)$, we have
  \begin{align*}
    \int_{[0,1]^s} f(\bsx\oplus \bssigma)\rd \bsx = \int_{[0,1]^s} f(\bsx)\rd \bsx.
  \end{align*}
\item The system $\{\wal_{\bsk}: \bsk\in \nat_0^s\}$ is a complete orthonormal system in $\Lcal_2([0,1]^s)$ for any $s\in \nat$.
\end{enumerate}
\end{proposition}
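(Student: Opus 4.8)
The plan is to treat the five items in logical dependency order, deriving the later ones from the earlier ones and from Definition~\ref{def:wal_1}, so that only the first two require direct computation. For item (1) I would argue straight from the definition together with the single fact $\omega_b^b=1$. Writing the $b$-adic digits of $k,l$ as $\kappa_i,\lambda_i$ and of $x$ as $\xi_i$, the two index-combining identities $\wal_k(x)\wal_l(x)=\wal_{k\oplus l}(x)$ and $\wal_k(x)\overline{\wal_l(x)}=\wal_{k\ominus l}(x)$ hold for \emph{every} $x$, because the exponents $(\kappa_i\pm\lambda_i)\xi_{i+1}$ and $((\kappa_i\pm\lambda_i)\bmod b)\,\xi_{i+1}$ differ by an integer multiple of $b$; no restriction is needed here since the $b$-adic expansion of a non-negative integer is genuinely finite and unique. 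The two argument-combining identities $\wal_k(x)\wal_k(y)=\wal_k(x\oplus y)$ and $\wal_k(x)\overline{\wal_k(y)}=\wal_k(x\ominus y)$ are where the hypothesis enters: the naive digitwise sequence for $x\oplus y$ (resp.\ $x\ominus y$) agrees with the \emph{canonical} expansion used in Definition~\ref{def:wal_1} precisely when $x\oplus y$ (resp.\ $x\ominus y$) is not a $b$-adic rational, so that no competing tail of digits equal to $b-1$ can appear, and the same exponent bookkeeping then closes the argument.

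For item (2) I would use the factorization $\wal_k(x)=\prod_{i\ge 0}\omega_b^{\kappa_i\xi_{i+1}}$ and integrate digit by digit: since the digits $\xi_{i+1}$ of a uniformly distributed $x$ are independent and uniform on $\ZZ_b$, the integral factors into $\prod_i\bigl(b^{-1}\sum_{\xi\in\ZZ_b}\omega_b^{\kappa_i\xi}\bigr)$, and each factor equals $1$ if $\kappa_i=0$ and $0$ otherwise by the geometric-sum identity for roots of unity; hence the product is $1$ exactly when $k=0$. Item (3) then follows by combining (1) and (2) coordinatewise: for a single coordinate $\wal_{k_j}(x_j)\overline{\wal_{l_j}(x_j)}=\wal_{k_j\ominus l_j}(x_j)$ holds for almost every $x_j$ (the exceptional $b$-adic rationals form a null set), so by (2) its integral is $1$ if $k_j=l_j$ and $0$ otherwise, and taking the product over $j=1,\dots,s$ via Definition~\ref{def:wal_s} gives the Kronecker delta in $\bsk=\bsl$. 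For item (4) I would avoid any appeal to (5) and instead show directly that the digitwise shift $\bsx\mapsto\bsx\oplus\bssigma$ is measure-preserving on $[0,1]^s$: at each level $n$ it permutes the $b^{ns}$ elementary $b$-adic sub-cubes of side $b^{-n}$ and acts on the remaining digits again by digitwise addition, so Lebesgue measure (the Haar measure of the underlying compact digit group) is invariant, and the change of variables leaves the integral of any $f\in\Lcal_2([0,1]^s)$ unchanged.

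The main obstacle is item (5), completeness, which is the only statement not reducible to elementary digit computations. My plan is the standard density argument. Fix $n\in\nat$ and consider, in one dimension, the $b^n$ functions $\wal_k$ with $0\le k<b^n$; each is constant on every elementary interval $[a b^{-n},(a+1)b^{-n})$, and the resulting $b^n\times b^n$ value matrix is, up to normalization, the character table of the group $\ZZ_b^{\,n}$, hence nonsingular. Consequently the linear span of $\{\wal_k:0\le k<b^n\}$ coincides with the space of all step functions constant on the level-$n$ elementary intervals. Letting $n\to\infty$, these step functions are dense in $\Lcal_2([0,1])$, so the one-dimensional Walsh system is complete; being orthonormal by (3), it is a complete orthonormal system, and the $s$-dimensional system is complete by the usual tensor-product argument. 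The one point demanding care throughout is the treatment of $b$-adic rationals and of the endpoint $x=1$, where the convention fixing the canonical expansion in Definition~\ref{def:wal_1} must be invoked to keep the pointwise identities consistent; fortunately these exceptional points form a Lebesgue-null set and therefore do not affect any of the $\Lcal_2$ statements.
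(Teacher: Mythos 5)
Your proof is correct, but there is no internal proof to compare it against: the paper states Proposition~\ref{prop:walsh} without proof, presenting it as a summary of known facts with a pointer to \cite[Appendix~A.2]{DP10}. So the only meaningful comparison is with that standard treatment, and your route is essentially it, organized by logical dependency: digit bookkeeping with $\omega_b^b=1$ for the product rules in item 1, the root-of-unity sum $\sum_{\xi\in\ZZ_b}\omega_b^{\kappa\xi}=0$ for $\kappa\not\equiv 0 \pmod b$ in item 2, reduction of item 3 to items 1 and 2 up to null sets, translation invariance of the Haar (product) measure on the digit group for item 4, and density of $b$-adic step functions, via nonsingularity of the character table of $\ZZ_b^n$, for item 5. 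Your handling of the canonical expansion is exactly where the hypothesis ``not a $b$-adic rational'' does its work: a value that is not a $b$-adic rational (or equals $1$) admits a unique digit representation, so the digitwise-sum sequence must coincide with the canonical expansion used in Definition~\ref{def:wal_1}. Two minor inaccuracies, neither of which damages the argument: first, in item 1 ``precisely when'' overstates an equivalence---non-rationality of $x\oplus y$ is sufficient but not necessary (in base $3$, $x=y=1/3$ gives the $b$-adic rational $x\oplus y=2/3$, yet the digitwise-sum sequence $2,0,0,\dots$ is still the canonical expansion), and sufficiency is all the proposition asserts; second, in item 2 the claim that the digits of a uniformly distributed $x$ are i.i.d.\ uniform on $\ZZ_b$ is itself a measure-theoretic fact resting on the same identification of Lebesgue measure with the product measure on digit sequences that you invoke in item 4, so a more self-contained variant is to note that $\wal_k$ depends only on the first $a$ digits, is constant on each elementary interval $[cb^{-a},(c+1)b^{-a})$, and its $b^a$ values sum to zero when $k\neq 0$.
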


\begin{remark}\label{remark:walsh}
In Item 1 of Proposition \ref{prop:walsh}, we exclude the case that $x\oplus y$ is a $b$-adic rational and the case that $x\ominus y$ is a $b$-adic rational. However, if we fix one variable, this exclusion holds for an at most countably infinite subset of $[0,1]$. For example, for any $y \in [0,1]$, the set $S =\{ x \mid \text{ $x \oplus y$ is a $b$-adic rational} \} \subset [0,1]$ is countable, and thus the Lebesgue measure of $S$ is 0. For this reason, this exclusion does not violate our subsequent analysis.
\end{remark}

\subsection{Digital nets}\label{subsec:digital_net}
In order to consider digital nets over $\ZZ_b$ for an arbitrary positive integer $b\ge 2$, we introduce the definition of digital nets over $\ZZ_b$ as in \cite{DM13}. For $m,n\in \nat$ with $m\le n$, we consider a point set $P_{b^m,s}\subset [0,1]^s$ consisting of $N=b^m$ points. We assume that every coordinate of $\bsx=(x_1,\dots,x_s)\in P_{b^m,s}$ is expressed with $n$-digit precision, which means that $x_j$ is given in the form of $x_j=\xi_{1,j}b^{-1}+\xi_{2,j}b^{-2}+\dots+\xi_{n,j}b^{-n}$ for $\xi_{1,j},\dots,\xi_{n,j}\in \ZZ_b$. Thus every point $\bsx$ can be identified with one element in $\ZZ_b^{s\times n}$, and similarly, $P_{b^m,s}$ can be identified with a subset of $\ZZ_b^{s\times n}$ consisting of $b^m$ elements. We then call $P_{b^m,s}$ {\em a digital net over $\ZZ_b$} when $P_{b^m,s}$ is identified with a {\em subgroup} of $\ZZ_b^{s\times n}$, where the group operation is the componentwise addition modulo $b$.

We now introduce a well-known construction principle of digital nets over $\ZZ_b$, see \cite{LNS96} for details. Let $C_1,\dots,C_s\in \ZZ_b^{n\times m}$ be $n\times m$ matrices over $\ZZ_b$ with $m\le n$. For $0\le l<b^m$, we denote the $b$-adic expansion of $l$ by $l=\iota_0+\iota_1b+\dots+\iota_{m-1}b^{m-1}$ with $\iota_0,\dots,\iota_{m-1}\in \ZZ_b$. For the vector $\vec{l}=(\iota_0,\dots,\iota_{m-1})^{\top}\in \ZZ_b^m$, we consider
  \begin{align*}
    \vec{y}_{l,j}=C_j\vec{l} \in \ZZ_b^n ,
  \end{align*}
for $1\le j\le s$, where $\vec{y}_{l,j}=(y_{1,l,j},\dots,y_{n,l,j})^{\top}$. Then we define
  \begin{align*}
    x_{l,j}=\frac{y_{1,l,j}}{b}+\frac{y_{2,l,j}}{b^2}+\dots+\frac{y_{n,l,j}}{b^n} \in [0,1] ,
  \end{align*}
for $1\le j\le s$. In this way we obtain the $l$-th point $\bsx_l=(x_{l,1},\dots,x_{l,s})$. Then a point set $P_{b^m,s}=\{\bsx_0,\dots,\bsx_{b^m-1}\}\subset [0,1]^s$ becomes a digital net over $\ZZ_b$. The matrices $C_1,\ldots,C_s$ are called the generating matrices of a digital net $P_{b^m,s}$. 

In this construction principle, how to find good generating matrices $C_1,\dots,C_s$ is of major concern. There have been many good explicit constructions of these matrices proposed by Sobol', Faure, Niederreiter, Niederreiter and Xing as well as others, see \cite[Section~8]{DP10} for more information. Higher order polynomial lattice point sets yield another construction of these matrices, which will be introduced in the next subsection.

The {\em dual net} of a digital net, which is defined as follows, plays an important role in the subsequent analysis.
\begin{definition}\label{def:dual_net}
For $m,n\in \nat$ with $m\le n$, let $P_{b^m,s}$ be a digital net over $\ZZ_b$ (with $n$-digit precision). The dual net of $P_{b^m,s}$, denoted by $D^{\perp}(P_{b^m,s})$, is defined as
  \begin{align*}
     D^{\perp}(P_{b^m,s}):=\{\bsk\in \nat_0^s: \vec{k}_1\cdot \vec{x}_1+ \dots + \vec{k}_s\cdot \vec{x}_s= 0 \pmod b\quad \text{for all}\; \bsx\in P_{b^m,s}\} ,
  \end{align*}
where, for $1\le j\le s$, $\vec{k}_j=(\kappa_{0,j},\dots,\kappa_{n-1,j})^{\top}\in \ZZ_b^n$ for $k_j$ with $b$-adic expansion $k_j=\kappa_{0,j}+\kappa_{1,j}b+\cdots$, which is actually a finite expansion, and $\vec{x}_j=(\xi_{1,j},\dots,\xi_{n,j})^{\top}\in \ZZ_b^n$ for $x_j$ with $b$-adic expansion $x_j=\xi_{1,j}b^{-1}+\dots+\xi_{n,j}b^{-n}$.
\end{definition}
\noindent
Since a digital net is identified with a subgroup of $\ZZ_b^{s\times n}$, the next lemma can be established from Definition \ref{def:dual_net}, which connects a digital net with Walsh functions. This is an obvious adaptation of \cite[Lemma~4.75]{DP10} to our context.
\begin{lemma}\label{lem:dual_Walsh}
Let $P_{b^m,s}$ be a digital net over $\ZZ_b$, and let $D^{\perp}(P_{b^m,s})$ be its dual net. Then we have
  \begin{align*}
     \sum_{\bsx\in P_{b^m,s}}\wal_{\bsk}(\bsx) = \left\{ \begin{array}{ll}
     b^m & \text{if} \ \bsk\in D^{\perp}(P_{b^m,s}) ,  \\
     0 & \text{otherwise} .  \\
    \end{array} \right.
  \end{align*}
\end{lemma}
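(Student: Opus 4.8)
The plan is to identify the Walsh character sum with a sum of values of a group character over the finite abelian group with which $P_{b^m,s}$ is identified, so that the result reduces to the standard orthogonality relation for characters (equivalently, to a one-line shift-and-cancel argument).

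First I would express $\wal_{\bsk}(\bsx)$ for a net point $\bsx\in P_{b^m,s}$ in terms of the bilinear pairing appearing in Definition~\ref{def:dual_net}. Writing $x_j=\xi_{1,j}b^{-1}+\dots+\xi_{n,j}b^{-n}$ and $k_j=\kappa_{0,j}+\kappa_{1,j}b+\cdots$, the digits $\kappa_{i,j}$ with $i\ge n$ pair with the vanishing digits $\xi_{i+1,j}=0$ in Definition~\ref{def:wal_1}, so only the first $n$ digits of each $k_j$ contribute. Hence, by Definitions~\ref{def:wal_1} and~\ref{def:wal_s},
$$\wal_{\bsk}(\bsx)=\omega_b^{\sum_{j=1}^s\vec{k}_j\cdot\vec{x}_j},$$
where $\vec{k}_j,\vec{x}_j\in\ZZ_b^n$ are exactly the vectors from Definition~\ref{def:dual_net} and the exponent is read modulo $b$. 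This exhibits $\bsx\mapsto\wal_{\bsk}(\bsx)$ as a homomorphism from $P_{b^m,s}$ (viewed as a subgroup of $\ZZ_b^{s\times n}$ under componentwise addition modulo $b$) into the unit circle: since $\oplus$ produces no carries, $\bsx\oplus\bsx_0$ again has $n$-digit precision and lies in the group, and the bilinear form is additive modulo $b$, so that $\wal_{\bsk}(\bsx\oplus\bsx_0)=\wal_{\bsk}(\bsx)\wal_{\bsk}(\bsx_0)$ holds directly for all net points $\bsx,\bsx_0$.

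With this in hand the conclusion is the usual character-sum dichotomy. If $\bsk\in D^{\perp}(P_{b^m,s})$ then the exponent vanishes for every $\bsx$, every summand equals $1$, and the sum equals the cardinality $b^m$. If $\bsk\notin D^{\perp}(P_{b^m,s})$ then there is some $\bsx_0\in P_{b^m,s}$ with $\wal_{\bsk}(\bsx_0)\neq 1$; since $\bsx\mapsto\bsx\oplus\bsx_0$ permutes $P_{b^m,s}$, reindexing gives
$$\wal_{\bsk}(\bsx_0)\sum_{\bsx\in P_{b^m,s}}\wal_{\bsk}(\bsx)=\sum_{\bsx\in P_{b^m,s}}\wal_{\bsk}(\bsx\oplus\bsx_0)=\sum_{\bsx\in P_{b^m,s}}\wal_{\bsk}(\bsx),$$
whence $(\wal_{\bsk}(\bsx_0)-1)\sum_{\bsx\in P_{b^m,s}}\wal_{\bsk}(\bsx)=0$ forces the sum to vanish.

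I expect no serious obstacle: the only point requiring care is the first step, namely checking that the restriction of $\wal_{\bsk}$ to the net is genuinely a character of the group. This rests on two elementary facts specific to the $b$-adic finite-precision setting, that the higher digits of $k_j$ contribute nothing because net points carry only $n$ digits, and that digitwise addition $\oplus$ introduces no carries so the subgroup structure is respected. Note in particular that one should \emph{not} invoke Item~1 of Proposition~\ref{prop:walsh} at net points, since those are $b$-adic rationals and are therefore excluded there; the needed multiplicativity instead follows directly from the bilinear-form representation above. Everything after that is the textbook orthogonality of characters on a finite abelian group, exactly as in the cited \cite[Lemma~4.75]{DP10}.
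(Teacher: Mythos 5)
Your proof is correct and takes essentially the same approach as the paper, which offers no written proof of its own but points to \cite[Lemma~4.75]{DP10}: one views the restriction of $\wal_{\bsk}$ to the net as a character of the finite group $P_{b^m,s}\subset \ZZ_b^{s\times n}$ via the pairing in Definition~\ref{def:dual_net}, and then applies the standard shift-and-cancel (character orthogonality) dichotomy. Your extra care in deriving multiplicativity from the bilinear-form representation rather than from Item~1 of Proposition~\ref{prop:walsh} (which excludes $b$-adic rationals, hence all net points) is exactly the right way to make the ``obvious adaptation'' rigorous.
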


Randomization of point sets is useful to obtain some statistical information on the integration error. Especially for digital nets, randomization algorithms by a random digital shift and Owen's scrambling have been often discussed in the literature, see, e.g., \cite[Chapter~13]{DP10}. In this paper we shall use a random digital shift. Let $\bssigma=(\sigma_1,\dots,\sigma_s)\in [0,1]^s$ be such that $\sigma_1,\dots,\sigma_s$ are independently and uniformly distributed in $[0,1]$. Then a randomly digitally shifted digital net $P_{b^m,s}\oplus \bssigma$ is obtained by
  \begin{align*}
     P_{b^m,s}\oplus \bssigma=\{\bsx\oplus \bssigma: \bsx\in P_{b^m,s}\},
  \end{align*}
where, as in the beginning of this section, all $\oplus$ operations are actually well-defined. In Sections \ref{sec:mse} and \ref{sec:hopoly}, we shall employ randomly digitally shifted digital nets that are folded using the $b$-TT.

\subsection{Higher order polynomial lattice point sets}\label{subsec:hopoly}
Higher order polynomial lattice point sets are digital nets over $\ZZ_b$ whose construction is based on rational functions over finite fields. Polynomial lattice point sets were originally introduced by Niederreiter in \cite{N92a}, and later, the definition has been generalized to introduce higher order polynomial lattice point sets, see, e.g., \cite{DP07,DP10}.

Throughout this subsection, let $b$ be a prime. We denote by $\ZZ_b[x]$ the set of all polynomials over $\ZZ_b$ and by $\ZZ_b((x^{-1}))$ the field of formal Laurent series over $\ZZ_b$. Every element of $\ZZ_b((x^{-1}))$ can be uniquely expressed in the form
  \begin{align*}
    L=\sum_{l=w}^{\infty}t_lx^{-l},
  \end{align*}
for some integer $w$ and $t_l\in \ZZ_b$. For $n\in \nat$, we define the mapping $v_n$ from $\ZZ_b((x^{-1}))$ to the unit interval $[0,1]$ by
  \begin{align*}
    v_n\left( \sum_{l=w}^{\infty}t_l x^{-l}\right) =\sum_{l=\max(1,w)}^{n}t_l b^{-l}.
  \end{align*}
We shall often identify an integer $n=n_0+n_1b+\cdots\in \nat_0$ with a polynomial $n(x)=n_0+n_1x+\cdots \in \ZZ_b[x]$. Then higher order polynomial lattice point sets are constructed as follows.

\begin{definition}\label{def:hopoly}
For $m, n,s \in \nat$ with $m\le n$, let $p \in \ZZ_b[x]$ with $\deg(p)=n$ and let $\bsq=(q_1,\ldots,q_s) \in (\ZZ_b[x])^s$. A higher order polynomial lattice point set $P_{b^m,s}(\bsq,p)$ consists of $b^m$ points that are given by
  \begin{align*}
    \bsx_h &:= \left( v_{n}\left( \frac{h(x)q_1(x)}{p(x)} \right) , \ldots , v_{n}\left( \frac{h(x)q_s(x)}{p(x)} \right) \right) \in [0,1]^s ,
  \end{align*}
for $0\le h <b^m$. A QMC rule using a higher order polynomial lattice point set is called a higher order polynomial lattice rule with a generating vector $\bsq$ and a modulus $p$.
\end{definition}

\begin{remark}\label{rem:hopoly}
From the viewpoint of the construction principle in the preceding subsection, higher order polynomial lattice point sets are understood as follows. Let us consider the expansions
  \begin{align*}
    \frac{q_j(x)}{p(x)}=\sum_{l=w_j}^{\infty}t_{l}^{(j)}x^{-l} \in \ZZ_b((x^{-1})) ,
  \end{align*}
for $1\le j\le s$, where $w_j$ is an integer and all $t_{l}^{(j)}\in \ZZ_b$. Then the $n\times m$ matrix $C_j=(c_{l,r}^{(j)})$ is obtained by
  \begin{align*}
    c_{l,r}^{(j)}=t_{l+r-1}^{(j)} ,
  \end{align*}
for $1\le j\le s$, $1\le l\le n$ and $1\le r\le m$. Then the matrices $C_1,\ldots,C_s$ are used as the generating matrices of a digital net.
\end{remark}
\noindent
Using \cite[Lemma~15.26]{DP10}, the dual net of a higher order polynomial lattice point set $P_{b^m,s}(\bsq,p)$ can be expressed in a different way from Definition \ref{def:dual_net}.
\begin{lemma}\label{lem:hopoly_dual_net}
For $m,n\in \nat$ with $m\le n$, let $P_{b^m,s}(\bsq,p)$ be a higher order polynomial lattice point set. The dual net of $P_{b^m,s}(\bsq,p)$, denoted by $D^{\perp}(\bsq,p)$, is given as
  \begin{align*}
     D^{\perp}(\bsq,p) := \{ & \bsk=(k_1,\dots,k_s)\in \nat_0^s:\\
                                        & \tr_n(k_1)q_1 + \dots + \tr_n(k_s)q_s\equiv a \pmod p\quad \text{with}\quad \deg(a)<n-m \} ,
  \end{align*}
where we define the truncated polynomial $\tr_n(k)$, associated with $k\in \nat_0$ whose $b$-adic expansion is given by $k=\kappa_0+\kappa_1 b+\cdots$, as
  \begin{align*}
     \tr_n(k)(x)=\kappa_0+\kappa_1 x+\dots +\kappa_{n-1} x^{n-1} .
  \end{align*}
\end{lemma}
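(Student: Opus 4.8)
The plan is to translate the orthogonality condition of Definition \ref{def:dual_net} first into a condition on the generating matrices, then into a statement about Laurent coefficients, and finally into the congruence modulo $p$ asserted in the lemma. I would begin by rewriting the membership condition $\bsk \in D^\perp(P_{b^m,s})$ in matrix form. By the construction principle together with Remark \ref{rem:hopoly}, the digit vector of the $j$-th coordinate of the point indexed by $h$ is $\vec{x}_{h,j} = C_j \vec{h}$, where $\vec{h}=(h_0,\dots,h_{m-1})^\top$ ranges over all of $\ZZ_b^m$ as $h$ runs over $0\le h<b^m$. Hence $\sum_j \vec{k}_j\cdot \vec{x}_{h,j} = \bigl(\sum_j C_j^\top \vec{k}_j\bigr)^\top \vec{h}$, and this vanishes modulo $b$ for every $\vec{h}\in \ZZ_b^m$ precisely when $\sum_{j=1}^s C_j^\top \vec{k}_j = \vec{0}$ in $\ZZ_b^m$. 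So the dual net is exactly the set of $\bsk$ satisfying these $m$ linear relations.

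Next I would make the matrix entries explicit. Writing $q_j/p=\sum_{l\ge w_j} t_l^{(j)} x^{-l}$ and using $c_{l,r}^{(j)} = t_{l+r-1}^{(j)}$, the $r$-th component of $\sum_j C_j^\top \vec{k}_j$ equals $\sum_{j=1}^s\sum_{l=1}^n \kappa_{l-1,j}\, t_{l+r-1}^{(j)}$, which is exactly the coefficient of $x^{-r}$ in the formal Laurent series $\sum_{j=1}^s \tr_n(k_j)(x)\, q_j(x)/p(x)$; here $\tr_n(k_j)$ is precisely the degree-$<n$ polynomial whose coefficient vector is $\vec{k}_j$. This identification is the content of \cite[Lemma~15.26]{DP10}. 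Consequently, $\bsk\in D^\perp(P_{b^m,s})$ if and only if the coefficients of $x^{-1},\dots,x^{-m}$ in $\sum_j \tr_n(k_j)\, q_j/p$ all vanish.

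Finally I would pass to the congruence. Set $g:=\sum_j \tr_n(k_j)\, q_j\in \ZZ_b[x]$ and let $a$ be the remainder of $g$ modulo $p$, so that $g\equiv a\pmod p$ with $\deg a<n=\deg p$. Since $(g-a)/p$ is a polynomial and contributes no negative powers of $x$, the coefficients of $x^{-1},\dots,x^{-m}$ in $g/p$ coincide with those in $a/p$. It therefore remains to show that these first $m$ Laurent coefficients of $a/p$ vanish if and only if $\deg a < n-m$, which would complete the identification $D^\perp(P_{b^m,s})=D^\perp(\bsq,p)$.

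I expect this last equivalence to be the only real obstacle. Writing $a/p=\sum_{l\ge 1} r_l x^{-l}$ and expanding $a=p\cdot(a/p)$, and using that $b$ is prime so that the leading coefficient of $p$ is a unit in $\ZZ_b$, the coefficients of $x^{n-1},\dots,x^{n-m}$ in $a$ are related to $r_1,\dots,r_m$ by a lower-triangular linear system with that unit on the diagonal. Inverting this system shows $r_1=\dots=r_m=0$ exactly when those top $m$ coefficients of $a$ vanish, i.e.\ when $\deg a\le n-m-1$. I would verify this triangular recursion carefully; the remaining bookkeeping, namely the consistency of the truncation $\tr_n$ with the $n$-digit precision and the fact that only $k_j \bmod b^n$ enters either description, is routine.
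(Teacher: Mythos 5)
Your proof is correct. Note, however, that the paper offers no proof of this lemma at all: it simply quotes the statement as an adaptation of \cite[Lemma~15.26]{DP10}, so there is no internal argument to compare against, and what you have written is essentially the standard derivation underlying that citation. All of your steps check out: the reduction of Definition \ref{def:dual_net} to the condition $\sum_{j=1}^s C_j^\top \vec{k}_j=\vec{0}$ in $\ZZ_b^m$ is valid because $\vec{h}$ runs through all of $\ZZ_b^m$ (taking $\vec{h}$ to be the standard basis vectors); the identification of the $r$-th component of $C_j^\top\vec{k}_j$ with the coefficient of $x^{-r}$ in $\tr_n(k_j)\,q_j/p$ follows by direct expansion from $c^{(j)}_{l,r}=t^{(j)}_{l+r-1}$; and discarding the polynomial part $(g-a)/p$ is legitimate since it contributes no negative powers. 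For your final step, the triangular-system argument is sound (it uses that the leading coefficient of $p$ is a unit, which holds since $b$ is assumed prime throughout Subsection \ref{subsec:hopoly}), but there is a one-line alternative: writing $a/p=\sum_{l\ge w}r_lx^{-l}$ with $r_w\ne 0$ for $a \ne 0$, the identity $a=p\cdot(a/p)$ forces $\deg a = n-w$ because the product of the two leading coefficients is nonzero in the field $\ZZ_b$; hence the coefficients of $x^{-1},\dots,x^{-m}$ all vanish iff $w\ge m+1$ iff $\deg a<n-m$ (the case $a=0$ being trivial). One small attribution slip: \cite[Lemma~15.26]{DP10} is essentially the statement of the present lemma itself, not merely of your Laurent-coefficient identification in the second step; this does not affect the mathematics.
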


%%%%%%%%%%%%%%%%%%%%%%%%%%%%%%%%%%%%%%%%%%%%%%%%%%%%%%%%%%%
\section{The $b$-adic tent transformation and its properties}\label{sec:bbt}
In this section, we introduce the $b$-adic tent transformation ($b$-TT) and describe its properties. Before, we recall that the original (dyadic) tent transformation (2-TT) used by Hickernell \cite{H02} is given by $\phi_2(x)=1-|2x-1|$. The two key properties of $\phi_2$, used in the analysis by Cristea et al. \cite{CDLP07}, are essentially that for any $f\in \Lcal_2([0,1])$ the integral of $f\circ \phi_2$ over the interval $[0,1]$ equals that of $f$, and that the $k$-th dyadic Walsh coefficient of $f\circ \phi_2$ becomes 0 if the dyadic sum-of-digits of $k$ is odd. Here the dyadic sum-of-digits is defined as $\delta_2(k)=\kappa_0+\kappa_1+\cdots$ for $k\in \nat_0$ with dyadic expansion $k=\kappa_0+\kappa_12+\cdots$. (We note that Cristea et al. did not explicitly state the latter property in this way.)

We now introduce the $b$-TT, which shall be denoted by $\phi_b$. Let us denote the $b$-adic expansion of $x\in [0,1]$ by
  \begin{align*}
     x=\sum_{i=1}^{\infty}\frac{\xi_i}{b^i} ,
  \end{align*}
where $\xi_i\in \ZZ_b$ for all $i$, which is unique in the sense that infinitely many of the $\xi_i$ are different from $b-1$ if $x \neq 1$ and that all $\xi_i$ are equal to $b-1$ if $x = 1$. Then $\phi_b$ is given by
  \begin{align*}
     \phi_b(x) := \sum_{i=1}^{\infty}\frac{\eta_i}{b^i} \quad \text{with}\quad \eta_i=\xi_{i+1}- \xi_1 \pmod b .
  \end{align*}
For $\bsx = (x_1, \dots, x_s) \in [0,1]^s$, we define $\phi_b(\bsx) := (\phi_b(x_1), \dots, \phi_b(x_s))$ (we use the same symbol $\phi_b$).

In order to give another expression of $\phi_b$, we define two more functions $\sigma_b$ and $\tau_b$ as
  \begin{align*}
     \sigma_b(x) := \sum_{i=1}^{\infty}\frac{\xi_{i+1}}{b^i}=bx-\xi_1 ,
  \end{align*}
and
  \begin{align*}
     \tau_b(x) := \sum_{i=1}^{\infty}\frac{\xi_1}{b^i}=\frac{\xi_1}{b-1} ,
  \end{align*}
respectively. Here we note that only for the case $\xi_1=b-1$ we allow $\tau_b$ to have the $b$-adic expansion with infinitely many digits equal to $b-1$. Using $\sigma_b$ and $\tau_b$, we can express $\phi_b$ as
  \begin{align*}
     \phi_b(x) = \sigma_b(x)\ominus \tau_b(x) = (bx-\xi_1)\ominus \left( \frac{\xi_1}{b-1}\right).
  \end{align*}

\begin{remark}
The $b$-TT is plotted in Figure \ref{fig:tent} for the case $b=3$. For visualization, we consider the {\em truncated} $b$-TT with $n$-digit. That is, we consider the mapping
  \begin{align*}
     \phi_{b,n}(x)=\sum_{i=1}^{n}\frac{\eta_i}{b^i} \quad \text{with}\quad \eta_i=\xi_{i+1}- \xi_1 \pmod b .
  \end{align*}
As can be seen, the $b$-TT is not generally a continuous mapping except for the case $b=2$ or for the interval $[0,1/b)$ in which we have $\phi_b(x)=bx$. For the case $b=2$, we have $\xi_1=1$ in the interval $[1/2,1]$, so that
  \begin{align*}
     \phi_2(x) = & \sum_{i=1}^{\infty}\frac{1-\xi_{i+1}}{2^i} = 1-\sum_{i=1}^{\infty}\frac{\xi_{i+1}}{2^i} \\
     = & 1-(2x-1) = 2-2x ,
  \end{align*}
for $x \in [1/2, 1]$. Hence we recover the original tent transformation as in \cite{H02}.
\end{remark}

\begin{figure}
\begin{center}
\includegraphics{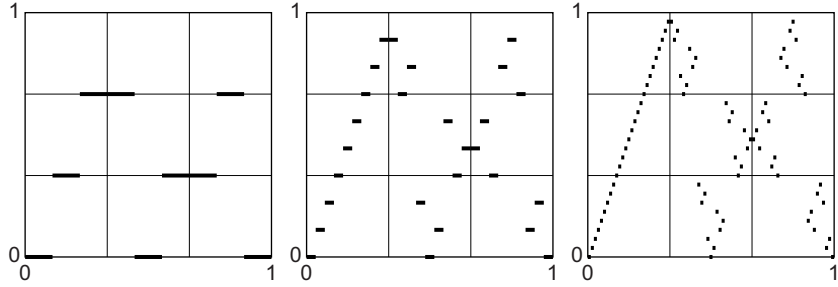}
\caption{The $b$-adic tent transformation with $b=3$ for $n=1,2,3$.}
\label{fig:tent}
\end{center}
\end{figure}
\noindent
In the following, we prove that for any $f\in \Lcal_2([0,1])$ the integral of $f\circ \phi_b$ over the interval $[0,1]$ equals that of $f$, see Theorem \ref{thm:integral_bbt}, and that the $k$-th $b$-adic Walsh coefficient of $f\circ \phi_b$ becomes 0 if the $b$-adic sum-of-digits of $k$ modulo $b$ does not equal 0, see Theorem \ref{thm:Walsh_coeff_bbt}. Here the $b$-adic sum-of-digits of $k$ is defined as $$\delta_b(k):=\kappa_0+\kappa_1+\cdots+\kappa_{a-1}$$ for $k\in \nat_0$ with $b$-adic expansion $k=\kappa_0+\kappa_1b+\cdots+\kappa_{a-1}b^{a-1}$.

\begin{theorem}\label{thm:integral_bbt}
Let $b\ge 2$ be an integer. For any $f\in \Lcal_2([0,1])$, we have
  \begin{align*}
     \int_0^1 (f\circ \phi_b) (x)\rd x = \int_0^1 f(x)\rd x.
  \end{align*}
\end{theorem}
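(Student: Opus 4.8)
The plan is to decompose $[0,1]$ according to the first $b$-adic digit $\xi_1$ of $x$, to observe that on each piece $\phi_b$ acts as an affine rescaling followed by a digitwise translation, and then to invoke the digital shift invariance recorded in Item~4 of Proposition~\ref{prop:walsh}. Concretely, for $j\in\{0,1,\dots,b-1\}$ put $I_j=[j/b,(j+1)/b)$, so that $[0,1)$ is the disjoint union of the $I_j$ (the single point $x=1$ is negligible for integration). Every $x\in I_j$ has first digit $\xi_1=j$, so the closed-form expression $\phi_b(x)=\sigma_b(x)\ominus\tau_b(x)$ from the definition of the $b$-TT gives, for $x\in I_j$,
\[
  \phi_b(x)=(bx-j)\ominus\frac{j}{b-1}.
\]

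First I would perform, on each $I_j$, the change of variables $u=\sigma_b(x)=bx-j$, an affine bijection of $I_j$ onto $[0,1)$ with $\rd x=b^{-1}\rd u$, obtaining
\[
  \int_{I_j}(f\circ\phi_b)(x)\rd x=\frac{1}{b}\int_0^1 f\!\left(u\ominus\frac{j}{b-1}\right)\rd u.
\]
Next I would eliminate the digitwise shift by writing $u\ominus\tfrac{j}{b-1}=u\oplus c_j$, where $c_j\in[0,1]$ is the constant all of whose digits equal $(b-j)\bmod b$; then Item~4 of Proposition~\ref{prop:walsh} in dimension $s=1$ yields
\[
  \int_0^1 f\!\left(u\ominus\frac{j}{b-1}\right)\rd u=\int_0^1 f(u)\rd u.
\]
Summing over $j=0,\dots,b-1$ and using additivity over the partition $\{I_j\}$ then gives $\int_0^1(f\circ\phi_b)(x)\rd x=b\cdot\tfrac1b\int_0^1 f(u)\rd u=\int_0^1 f(u)\rd u$, as claimed.

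The computation is essentially bookkeeping, and the only points requiring care — the main, if modest, obstacle — are the measure-zero exceptional sets arising from the uniqueness convention on $b$-adic expansions. The identity $\phi_b(x)=(bx-j)\ominus\tfrac{j}{b-1}$ on $I_j$, and the rewriting $u\ominus\tfrac{j}{b-1}=u\oplus c_j$, can fail digit-by-digit exactly at $b$-adic rationals, where carries or the expansion convention interfere; but these form a countable, hence Lebesgue-null, set, so they affect none of the integrals. This is precisely the kind of exclusion already noted in Remark~\ref{remark:walsh}. Discarding those null sets, both the change of variables and the appeal to shift invariance are valid, which completes the argument.
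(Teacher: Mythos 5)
Your proof is correct and takes essentially the same route as the paper's own proof: partition $[0,1]$ according to the first digit $\xi_1$, apply the affine change of variables $y=bx-\xi_1$ on each piece, and invoke the digit-shift invariance of the integral (Item~4 of Proposition~\ref{prop:walsh}). The only cosmetic difference is that you explicitly rewrite $u\ominus\frac{j}{b-1}$ as $u\oplus c_j$ with a complementary constant before applying shift invariance (a step the paper leaves implicit), and you spell out the measure-zero caveats; both are harmless refinements of the same argument.
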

\begin{proof}
By first dividing the interval $[0,1]$ into the $b$ intervals with the same length $[0,1/b),[1/b,2/b),\dots,[(b-1)/b,1]$ and then transforming the variable in each interval, we have
  \begin{align*}
     \int_0^1 (f\circ \phi_b) (x)\rd x = & \sum_{\xi_1=0}^{b-1}\int_{\xi_1/b}^{(\xi_1+1)/b} (f\circ \phi_b)(x)\rd x \\
     = & \sum_{\xi_1=0}^{b-1}\int_{\xi_1/b}^{(\xi_1+1)/b} f\left((bx-\xi_1)\ominus \left(\frac{\xi_1}{b-1} \right)  \right)\rd x \\
     = & \frac{1}{b}\sum_{\xi_1=0}^{b-1}\int_0^1 f\left(y\ominus  \left(\frac{\xi_1}{b-1} \right)  \right)\rd y \\
     = & \frac{1}{b}\sum_{\xi_1=0}^{b-1}\int_0^1 f(y)\rd y = \int_0^1 f(y)\rd y ,
  \end{align*}
where we use Item 4 of Proposition \ref{prop:walsh} in the fourth equality. Hence the result follows.
\end{proof}
\noindent
Since the system $\{\wal_k \colon k\in \nat_0\}$ is a complete orthonormal system in $\Lcal_2([0,1])$ as stated in Item 5 of Proposition \ref{prop:walsh}, we have a Walsh series expansion for any $f\in \Lcal_2([0,1])$,
  \begin{align*}
     f(x) \sim \sum_{k=0}^{\infty}\hat{f}(k)\wal_k(x) ,
  \end{align*}
where the $k$-th Walsh coefficient is given by
  \begin{align*}
     \hat{f}(k) := \int_0^1 f(x)\overline{\wal_k(x)}\rd x .
  \end{align*}
Since we only discuss the Walsh coefficient here, we do not need to have a pointwise absolute convergence of the Walsh series expansion at this moment. In the following theorem, we consider a Walsh series expansion of $f\circ \phi_b$ and calculate the $k$-th Walsh coefficient $\widehat{(f\circ \phi_b)}(k)$. For $k\in \nat_0$ with $b$-adic expansion $k=\kappa_0+\kappa_1 b+\kappa_2 b^2+\cdots+\kappa_{a-1}b^{a-1}$, we define $$\lfloor k/b\rfloor:=\kappa_1 +\kappa_2 b+\cdots+\kappa_{a-1}b^{a-2}.$$

\begin{theorem}\label{thm:Walsh_coeff_bbt}
For any $f\in \Lcal_2([0,1])$ and $k\in \nat_0$, we have 
  \begin{align*}
     \widehat{(f\circ \phi_b)}(k) = \left\{ \begin{array}{ll}
     \hat{f}(\lfloor k/b\rfloor) & \text{if} \quad \delta_b(k)\equiv 0 \pmod b ,  \\
     0 & \text{otherwise} ,  \\
    \end{array} \right.
  \end{align*}
where $\delta_b(k)$ is the $b$-adic sum-of-digits of $k$ as defined above.
\end{theorem}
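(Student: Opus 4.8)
The plan is to compute the Walsh coefficient $\widehat{(f\circ\phi_b)}(k)=\int_0^1 f(\phi_b(x))\overline{\wal_k(x)}\rd x$ directly, mimicking the interval-splitting argument used in the proof of Theorem \ref{thm:integral_bbt} but now carrying the extra factor $\overline{\wal_k(x)}$ through the changes of variables. First I would split $[0,1]$ into the $b$ subintervals $[\xi_1/b,(\xi_1+1)/b)$ indexed by the leading digit $\xi_1$, and on each piece substitute $y=bx-\xi_1=\sigma_b(x)$, which ranges over $[0,1)$ and satisfies $\phi_b(x)=y\ominus(\xi_1/(b-1))$ as in the earlier proof. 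The crucial new observation is the factorization of the Walsh character: writing $k=\kappa_0+\kappa_1 b+\cdots+\kappa_{a-1}b^{a-1}$ and using that $x$ has leading digit $\xi_1$ followed by the digits of $y$, the definition of $\wal_k$ gives $\wal_k(x)=\omega_b^{\kappa_0\xi_1}\,\wal_{\lfloor k/b\rfloor}(y)$, since $\lfloor k/b\rfloor$ simply drops the lowest digit $\kappa_0$.

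After this first substitution the coefficient becomes $\frac{1}{b}\sum_{\xi_1=0}^{b-1}\omega_b^{-\kappa_0\xi_1}\int_0^1 f\bigl(y\ominus(\xi_1/(b-1))\bigr)\overline{\wal_{\lfloor k/b\rfloor}(y)}\rd y$. I would then perform a second, measure-preserving change of variable $z=y\ominus(\xi_1/(b-1))$ on $[0,1)$, and apply the Walsh shift identity from Item 1 of Proposition \ref{prop:walsh} to write $\overline{\wal_{\lfloor k/b\rfloor}(y)}=\overline{\wal_{\lfloor k/b\rfloor}(z)}\,\overline{\wal_{\lfloor k/b\rfloor}(\xi_1/(b-1))}$. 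This turns each inner integral into $\overline{\wal_{\lfloor k/b\rfloor}(\xi_1/(b-1))}\,\hat{f}(\lfloor k/b\rfloor)$. The number $\xi_1/(b-1)$ has all $b$-adic digits equal to $\xi_1$, so evaluating the Walsh function on the digits $\kappa_1,\dots,\kappa_{a-1}$ of $\lfloor k/b\rfloor$ yields $\wal_{\lfloor k/b\rfloor}(\xi_1/(b-1))=\omega_b^{\xi_1(\delta_b(k)-\kappa_0)}$.

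Combining the two exponential factors, the dependence on $\kappa_0$ cancels and the total exponent reduces to $-\xi_1\delta_b(k)$, so the coefficient equals $\frac{1}{b}\hat{f}(\lfloor k/b\rfloor)\sum_{\xi_1=0}^{b-1}\omega_b^{-\xi_1\delta_b(k)}$. The final step is to evaluate this geometric sum over the $b$-th roots of unity: it equals $b$ when $\delta_b(k)\equiv 0\pmod b$ and vanishes otherwise, which gives precisely the claimed dichotomy.

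I expect the main obstacle to be bookkeeping rather than a deep difficulty: one must track the digit indices carefully so that the factorization $\wal_k(x)=\omega_b^{\kappa_0\xi_1}\wal_{\lfloor k/b\rfloor}(y)$ and the evaluation $\wal_{\lfloor k/b\rfloor}(\xi_1/(b-1))=\omega_b^{\xi_1(\delta_b(k)-\kappa_0)}$ are exactly right, since it is the cancellation of the $\kappa_0$-terms that makes $\delta_b(k)$, rather than $\delta_b(k)-\kappa_0$, appear in the geometric sum. The one analytic subtlety is that the Walsh product identity in Proposition \ref{prop:walsh} excludes arguments whose digitwise sum is a $b$-adic rational; by Remark \ref{remark:walsh} this occurs only on a set of measure zero in $z$, so it does not affect the integrals.
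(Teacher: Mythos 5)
Your proof is correct and follows essentially the same route as the paper's: the same splitting of $[0,1]$ into the $b$ subintervals indexed by the leading digit, the same factorization $\wal_k(x)=\omega_b^{\kappa_0\xi_1}\wal_{\lfloor k/b\rfloor}(y)$, and the same reduction to the geometric sum $\sum_{\xi_1=0}^{b-1}\omega_b^{-\xi_1\delta_b(k)}$ (the paper writes the exponent as $(b-\xi_1)\delta_b(k)$, which is identical modulo $b$). Your change of variable $z=y\ominus(\xi_1/(b-1))$ is precisely the digital-shift invariance (Item 4 of Proposition \ref{prop:walsh}) that the paper invokes before applying Item 1, and you correctly flag the measure-zero exclusion handled by Remark \ref{remark:walsh}.
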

\begin{proof}
Following the proof of Theorem \ref{thm:integral_bbt}, we have
  \begin{align}\label{eq:Walsh_coeff_bbt0}
     \widehat{(f\circ \phi_b)}(k) = & \int_0^1 (f\circ \phi_b)(x)\overline{\wal_k(x)} \rd x \nonumber \\
     = & \sum_{\xi_1=0}^{b-1}\int_{\xi_1/b}^{(\xi_1+1)/b} (f\circ \phi_b)(x)\overline{\wal_k(x)} \rd x \nonumber \\
     = & \sum_{\xi_1=0}^{b-1}\int_{\xi_1/b}^{(\xi_1+1)/b} f\left((bx-\xi_1)\ominus \left(\frac{\xi_1}{b-1} \right)  \right)\overline{\wal_k(x)} \rd x \nonumber \\
     = & \frac{1}{b}\sum_{\xi_1=0}^{b-1}\int_0^1 f\left(y\ominus \left(\frac{\xi_1}{b-1} \right) \right)\overline{\wal_k\left(\frac{y+\xi_1}{b} \right)} \rd y .
  \end{align}
For any $y\in [0,1)$, we have $(y+\xi_1)/b=(y/b)\oplus (\xi_1/b)$. Using Item 1 of Proposition \ref{prop:walsh} and from the definition of Walsh functions, we have
  \begin{align*}
     \wal_k\left(\frac{y+\xi_1}{b} \right)= \wal_k\left(\frac{\xi_1}{b} \right)\wal_k\left(\frac{y}{b} \right)=\wal_k\left(\frac{\xi_1}{b} \right)\wal_{\lfloor k/b\rfloor}(y) ,
  \end{align*}
for $y \in [0,1)$ except the countably infinite set of points, see Remark \ref{remark:walsh}. Substituting this result to (\ref{eq:Walsh_coeff_bbt0}) and then using Item 4 of Proposition \ref{prop:walsh}, we have
  \begin{align}\label{eq:Walsh_coeff_bbt1}
     \widehat{(f\circ \phi_b)}(k) = & \frac{1}{b}\sum_{\xi_1=0}^{b-1}\overline{\wal_k\left(\frac{\xi_1}{b} \right)}\int_0^1 f\left(y\ominus \left(\frac{\xi_1}{b-1} \right) \right)\overline{\wal_{\lfloor k/b\rfloor}(y)} \rd y \nonumber \\
     = & \frac{1}{b}\sum_{\xi_1=0}^{b-1}\overline{\wal_k\left(\frac{\xi_1}{b} \right)}\int_0^1 f(y)\overline{\wal_{\lfloor k/b\rfloor}\left(y\oplus \left(\frac{\xi_1}{b-1}\right)\right)} \rd y.
  \end{align}
\noindent
In the following, we denote the $b$-adic expansion of $k$ by $k=\kappa_0+\kappa_1 b+\cdots+\kappa_{a-1}b^{a-1}$ with $\kappa_{a-1}\ne 0$. Then the $b$-adic expansion of $\lfloor k/b\rfloor$ can be denoted by $\lfloor k/b\rfloor=\kappa_1+\kappa_2 b+\cdots+\kappa_{a-1}b^{a-2}$. For $\xi_1=0,1,\dots,b-1$, by using Item 1 of Proposition \ref{prop:walsh} and from the definition of Walsh functions, we have
  \begin{align}\label{eq:Walsh_coeff_bbt2}
     \int_0^1 f(y)\overline{\wal_{\lfloor k/b\rfloor}\left(y\oplus \left(\frac{\xi_1}{b-1}\right)\right)} \rd y = & \overline{\wal_{\lfloor k/b\rfloor}\left(\frac{\xi_1}{b-1}\right)}\int_0^1 f(y)\overline{\wal_{\lfloor k/b\rfloor}(y)} \rd y \nonumber \\
     = & \overline{\omega_b^{\kappa_1 \xi_1+\dots + \kappa_{a-1} \xi_1}}\hat{f}(\lfloor k/b\rfloor) \nonumber \\
     = & \omega_b^{(b-\xi_1)\delta_b(\lfloor k/b\rfloor)}\hat{f}(\lfloor k/b\rfloor) .
  \end{align}
%\textcolor{red}{
%Note that \eqref{eq:Walsh_coeff_bbt2} is also valid when $\xi_1 = b-1$
%by the definition of $\wal_{\lfloor k/b\rfloor}(1)$.
%}
%We consider the case with $\xi_1=b-1$ next. Following an argument similar to \cite[Lemma~3]{CDLP07}, we have
%  \begin{align*}
%     \overline{\wal_{\lfloor k/b\rfloor}\left(y\oplus \left(\frac{\xi_1}{b-1}\right)\right)} = & \overline{\omega_b^{\kappa_1(\eta_1-1)+\cdots+\kappa_{a-1}(\eta_{a-1}-1)}} \\
%     = & \omega_b^{\delta_b(\lfloor k/b\rfloor)}\overline{\wal_{\lfloor k/b\rfloor}(y)}
%  \end{align*}
%for $y\in [0,1]$ with $b$-adic expansion $y=\eta_1b^{-1}+\eta_2b^{-2}+\cdots$, where $\eta_e\ne 0$ for some $e\ge a$. Thus, we have
%  \begin{align}\label{eq:Walsh_coeff_bbt3}
%     \int_0^1 f(y)\overline{\wal_{\lfloor k/b\rfloor}\left(y\oplus \left(\frac{\xi_1}{b-1}\right)\right)} \rd y = & \omega_b^{\delta_b(\lfloor k/b\rfloor)}\int_0^1 f(y)\overline{\wal_{\lfloor k/b\rfloor}(y)} \rd y \nonumber \\
 %    = & \omega_b^{\delta_b(\lfloor k/b\rfloor)}\hat{f}(\lfloor k/b\rfloor).
%  \end{align}
Substituting (\ref{eq:Walsh_coeff_bbt2}) into (\ref{eq:Walsh_coeff_bbt1}), we have
  \begin{align*}
     \widehat{(f\circ \phi_b)}(k) = & \frac{1}{b}\sum_{\xi_1=0}^{b-1}\overline{\wal_k\left(\frac{\xi_1}{b} \right)}\omega_b^{(b-\xi_1)\delta_b(\lfloor k/b\rfloor)}\hat{f}(\lfloor k/b\rfloor) \\
     = & \frac{1}{b}\sum_{\xi_1=0}^{b-1}\omega_b^{(b-\xi_1)\kappa_0}\omega_b^{(b-\xi_1)\delta_b(\lfloor k/b\rfloor)} \hat{f}(\lfloor k/b\rfloor) \\
     = & \frac{1}{b}\sum_{\xi_1=0}^{b-1}\omega_b^{(b-\xi_1)\delta_b(k)} \hat{f}(\lfloor k/b\rfloor) \\
     = & \left\{ \begin{array}{ll}
     \hat{f}(\lfloor k/b\rfloor) & \text{if}\quad \delta_b(k) \equiv 0 \pmod b ,  \\
     0 & \text{otherwise} ,  \\
    \end{array} \right.
  \end{align*}
where the third equality stems from the identity $\delta_b(k)=\kappa_0+\delta_b(\lfloor k/b\rfloor)$. Hence, the result follows.
\end{proof}

%%%%%%%%%%%%%%%%%%%%%%%%%%%%%%%%%%%%%%%%%%%%%%%%%%%%%%%%%%%
\section{Mean square worst-case error in reproducing kernel Hilbert spaces}\label{sec:mse}
In this section, we study the mean square worst-case error of QMC rules using digital nets over $\ZZ_b$ that are randomly digitally shifted and then folded using the $b$-TT in reproducing kernel Hilbert spaces. Let us consider a reproducing kernel Hilbert space $\Hcal$ with reproducing kernel $\Kcal:[0,1]^s\times [0,1]^s\to \RR$. The inner product in $\Hcal$ is denoted by $\langle f,g \rangle_{\Hcal}$ for $f,g\in \Hcal$ and the associated norm is denoted by $\| f\|_{\Hcal}:=\sqrt{\langle f,f \rangle_{\Hcal}}$.

It is known that if the reproducing kernel $\Kcal$ satisfies $\int_{[0,1]^s}\sqrt{\Kcal(\bsx,\bsx)}\rd \bsx<\infty$, then the square worst-case error in the space $\Hcal$ with reproducing kernel $\Kcal$ of a QMC rule using a point set $P_{N,s}$ is given by
  \begin{align}\label{eq:worst-case_error}
     e^2(P_{N,s},\Kcal) := & \left( \sup_{\substack{f\in \Hcal \\ \|f\|_{\Hcal}\le 1}}|I(f)-Q(f;P_{N,s})|\right)^2 \nonumber \\ 
     = & \int_{[0,1]^{2s}}\Kcal(\bsx,\bsy)\rd \bsx \rd \bsy-\frac{2}{N}\sum_{\bsx\in P_{N,s}}\int_{[0,1]^s}\Kcal(\bsx,\bsy)\rd \bsy+\frac{1}{N^2}\sum_{\bsx,\bsy\in P_{N,s}}\Kcal(\bsx,\bsy) ,
  \end{align}
and the square initial error is given by
  \begin{align*}
     e^2(P_{0,s},\Kcal) := & \left( \sup_{\substack{f\in \Hcal\\ \|f\|_{\Hcal}\le 1}}|I(f)|\right)^2 = \int_{[0,1]^{2s}}\Kcal(\bsx,\bsy)\rd \bsx \rd \bsy .
  \end{align*}
We refer to \cite[Chapter~2]{DP10} for details. In the following, we always assume $\int_{[0,1]^s}\sqrt{\Kcal(\bsx,\bsx)}\rd \bsx<\infty$ and consider the mean square worst-case error of QMC rules using digital nets over $\ZZ_b$ which are randomly digitally shifted and then folded using the $b$-TT in the space $\Hcal$. For a randomly chosen $\bssigma\in [0,1]^s$ we denote by $\phi_b(P_{N,s}\oplus \bssigma)$ a point set $P_{N,s}$ that is digitally shifted by $\bssigma$ and then folded using the $b$-TT, that is,
  \begin{align*}
      \phi_b(P_{N,s}\oplus \bssigma) := \{\phi_b(\bsx) : \bsx\in P_{N,s}\oplus \bssigma\}.
  \end{align*}
Then the mean square worst-case error $\hat{e}^2(P_{N,s},\Kcal)$ of $\phi_b(P_{N,s}\oplus \bssigma)$ with respect to $\bssigma$ is defined by
  \begin{align*}
     \hat{e}^2(P_{N,s},\Kcal) := & \int_{[0,1]^s}e^2(\phi_b(P_{N,s}\oplus \bssigma),\Kcal) \rd \bssigma .
  \end{align*}
Furthermore the folded digitally shifted reproducing kernel $\Kcal_{\sh,\phi_b}:[0,1]^s\times [0,1]^s \to \RR$ is defined by
  \begin{align*}
     \Kcal_{\sh,\phi_b}(\bsx,\bsy) := \int_{[0,1]^s}\Kcal(\phi_b(\bsx\oplus \bssigma),\phi_b(\bsy\oplus \bssigma))\rd \bssigma .
  \end{align*}
Then we have the following theorem on the mean square worst-case error. Since the result follows in exactly the same way as \cite[Theorem~1]{CDLP07} which requires only the property in our Theorem \ref{thm:integral_bbt}, we omit the proof.
\begin{theorem}\label{thm:mse_bbt}
Let $\Kcal, \Kcal_{\sh,\phi_b}\in \Lcal_2([0,1]^{2s})$ be a reproducing kernel and its folded digitally shifted reproducing kernel, respectively, such that $\int_{[0,1]^s}\sqrt{\Kcal(\bsx,\bsx)}\rd \bsx<\infty$. For a point set $P_{N,s}$, we denote by $\phi_b(P_{N,s}\oplus \bssigma)$ a point set $P_{N,s}$ that is digitally shifted by a randomly chosen $\bssigma$ and then folded using the $b$-TT. Then we have
  \begin{align*}
     \hat{e}^2(P_{N,s},\Kcal) = e^2(P_{N,s},\Kcal_{\sh,\phi_b}) .
  \end{align*}
\end{theorem}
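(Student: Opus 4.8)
The plan is to expand the mean square worst-case error via the closed form (\ref{eq:worst-case_error}) applied to the folded shifted point set $\phi_b(P_{N,s}\oplus\bssigma)$, whose points are $\phi_b(\bsx\oplus\bssigma)$ for $\bsx\in P_{N,s}$, and then integrate over $\bssigma\in[0,1]^s$, matching the three resulting terms against the three terms of $e^2(P_{N,s},\Kcal_{\sh,\phi_b})$. Concretely, (\ref{eq:worst-case_error}) gives
  \begin{align*}
     e^2(\phi_b(P_{N,s}\oplus\bssigma),\Kcal)
     &= \int_{[0,1]^{2s}}\Kcal(\bsx,\bsy)\rd\bsx\rd\bsy
     - \frac{2}{N}\sum_{\bsx\in P_{N,s}}\int_{[0,1]^s}\Kcal(\phi_b(\bsx\oplus\bssigma),\bsy)\rd\bsy \\
     &\quad + \frac{1}{N^2}\sum_{\bsx,\bsy\in P_{N,s}}\Kcal(\phi_b(\bsx\oplus\bssigma),\phi_b(\bsy\oplus\bssigma)),
  \end{align*}
and I would integrate this identity over $\bssigma$, using Fubini (justified by the hypotheses $\Kcal,\Kcal_{\sh,\phi_b}\in\Lcal_2([0,1]^{2s})$ together with the finiteness of the sums) to pull the $\bssigma$-integral inside each term.

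The quadratic term is immediate: interchanging the finite double sum with the $\bssigma$-integral reproduces $\frac{1}{N^2}\sum_{\bsx,\bsy}\Kcal_{\sh,\phi_b}(\bsx,\bsy)$ straight from the definition of $\Kcal_{\sh,\phi_b}$. For the constant and linear terms the key ingredient is the $s$-dimensional integral-preservation property $\int_{[0,1]^s}g(\phi_b(\bsz))\rd\bsz=\int_{[0,1]^s}g(\bsz)\rd\bsz$ valid for every $g\in\Lcal_2([0,1]^s)$, which follows from Theorem \ref{thm:integral_bbt} applied coordinatewise via Fubini, since $\phi_b$ acts componentwise. For the constant term, after expanding the definition of $\Kcal_{\sh,\phi_b}$ and using Fubini to place the $\bssigma$-integral outermost in $\int_{[0,1]^{2s}}\Kcal_{\sh,\phi_b}(\bsx,\bsy)\rd\bsx\rd\bsy$, for each fixed $\bssigma$ the shift-invariance of Lebesgue measure under $\oplus$ (Item 4 of Proposition \ref{prop:walsh}) removes the $\oplus\bssigma$, and integral-preservation in both $\bsx$ and $\bsy$ collapses the inner double integral to $\int_{[0,1]^{2s}}\Kcal(\bsx,\bsy)\rd\bsx\rd\bsy$; as this is $\bssigma$-independent, integrating over $\bssigma$ leaves it unchanged and matches the constant term above. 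The linear term is handled analogously: for each fixed $\bsx\in P_{N,s}$, starting from $\int_{[0,1]^s}\Kcal_{\sh,\phi_b}(\bsx,\bsy)\rd\bsy$ and applying Fubini, the substitution $\bsy\mapsto\bsy\oplus\bssigma$ together with integral-preservation in $\bsy$ turns the inner integral into $\int_{[0,1]^s}\Kcal(\phi_b(\bsx\oplus\bssigma),\bsy)\rd\bsy$, which upon integrating over $\bssigma$ reproduces the linear term above.

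Since this is essentially the computation of \cite[Theorem~1]{CDLP07} with $\phi_2$ replaced by $\phi_b$, the genuine mathematical content is carried entirely by Theorem \ref{thm:integral_bbt}; everything else is bookkeeping with Fubini and the shift-invariance of $\oplus$. The only point requiring care, and the place I expect to spend the most attention, is the justification of the $s$-dimensional integral-preservation step and the changes of variables in the presence of the countable exceptional set of $b$-adic rationals on which $\phi_b$ and $\oplus$ are delicate; as recorded in Remark \ref{remark:walsh}, this set has Lebesgue measure zero, so it does not affect any of the integrals, and I would state this explicitly to keep the argument rigorous. This is precisely why the authors can defer to \cite{CDLP07} and omit the proof.
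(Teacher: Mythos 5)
Your proposal is correct and is essentially the proof the paper points to: the paper omits the argument precisely because it is the three-term matching computation of \cite[Theorem~1]{CDLP07}, with the dyadic tent transformation replaced by $\phi_b$ and the only new ingredient being the integral-preservation property of Theorem~\ref{thm:integral_bbt} (applied coordinatewise), exactly as you carry out. Your handling of Fubini, the shift-invariance from Item 4 of Proposition~\ref{prop:walsh}, and the measure-zero exceptional set matches the intended argument, so there is nothing to add.
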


We now consider the Walsh series expansion of $\Kcal_{\sh,\phi_b}$,
  \begin{align*}
     \Kcal_{\sh,\phi_b}(\bsx,\bsy) \sim \sum_{\bsk,\bsl\in \nat_0^s}\hat{\Kcal}_{\sh,\phi_b}(\bsk,\bsl)\wal_{\bsk}(\bsx)\overline{\wal_{\bsl}(\bsy)} ,
  \end{align*}
where the $(\bsk,\bsl)$-th Walsh coefficient is given by
  \begin{align*}
     \hat{\Kcal}_{\sh,\phi_b}(\bsk,\bsl)= \int_{[0,1]^{2s}}\Kcal_{\sh,\phi_b}(\bsx,\bsy)\overline{\wal_{\bsk}(\bsx)}\wal_{\bsl}(\bsy)\rd \bsx \rd \bsy .
  \end{align*}
We shall discuss a pointwise absolute convergence of the Walsh series expansion later in Proposition \ref{prop:the Walsh expansion of kernel}. Regarding the Walsh coefficient of $\Kcal_{\sh,\phi_b}$, we have the following theorem.
\begin{theorem}\label{thm:reproducing_kernel}
Let $\Kcal, \Kcal_{\sh,\phi_b}\in \Lcal_2([0,1]^{2s})$ be a reproducing kernel and its folded digitally shifted reproducing kernel, respectively, such that $\int_{[0,1]^s}\sqrt{\Kcal(\bsx,\bsx)}\rd \bsx<\infty$. For $\bsk,\bsl\in \nat_0^s$, the $(\bsk,\bsl)$-th Walsh coefficient of $\Kcal_{\sh,\phi_b}$ is given by
  \begin{align*}
     \hat{\Kcal}_{\sh,\phi_b}(\bsk,\bsl) = \left\{ \begin{array}{ll}
     \hat{\Kcal}(\lfloor \bsk/b\rfloor,\lfloor \bsk/b\rfloor) & \text{if}\quad \bsk=\bsl\quad \text{and}\quad \bsk\in (\Ecal_b\cup\{0\})^s,  \\
     0 & \text{otherwise} ,  \\
    \end{array} \right.
  \end{align*}
where $\lfloor \bsk/b\rfloor=(\lfloor k_1/b\rfloor,\dots,\lfloor k_s/b\rfloor)$ and we define
  \begin{align*}
     \Ecal_b:=\{k\in \nat: \delta_b(k)\equiv 0\pmod b\}.
  \end{align*}
Moreover, the Walsh series expansion of $\Kcal_{\sh,\phi_b}$ is given by
  \begin{align*}
     \Kcal_{\sh,\phi_b}(\bsx,\bsy) \sim \sum_{\bsk\in (\Ecal_b\cup\{0\})^s}\hat{\Kcal}(\lfloor \bsk/b\rfloor,\lfloor \bsk/b\rfloor)\wal_{\bsk}(\bsx) \overline{\wal_{\bsk}(\bsy)}.
  \end{align*}
\end{theorem}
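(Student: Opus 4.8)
The plan is to start from the definition $\hat{\Kcal}_{\sh,\phi_b}(\bsk,\bsl)=\int_{[0,1]^{2s}}\Kcal_{\sh,\phi_b}(\bsx,\bsy)\overline{\wal_{\bsk}(\bsx)}\wal_{\bsl}(\bsy)\rd\bsx\rd\bsy$, to substitute the definition of $\Kcal_{\sh,\phi_b}$, and to interchange the order of integration by Fubini's theorem, which is justified since $\Kcal\in\Lcal_2([0,1]^{2s})$. For each fixed $\bssigma$ I would then perform the measure-preserving change of variables $\bsv=\bsx\oplus\bssigma$ and $\bsz=\bsy\oplus\bssigma$. Using Item 1 of Proposition~\ref{prop:walsh} (valid outside a null set, cf.\ Remark~\ref{remark:walsh}) the two Walsh factors split as $\overline{\wal_{\bsk}(\bsx)}=\overline{\wal_{\bsk}(\bsv)}\,\wal_{\bsk}(\bssigma)$ and $\wal_{\bsl}(\bsy)=\wal_{\bsl}(\bsz)\,\overline{\wal_{\bsl}(\bssigma)}$, so that the $\bssigma$-dependence collects into the single factor $\wal_{\bsk}(\bssigma)\overline{\wal_{\bsl}(\bssigma)}$, which is independent of $\bsv,\bsz$. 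Integrating this factor over $\bssigma$ and invoking the orthonormality in Item 3 of Proposition~\ref{prop:walsh} gives $\int_{[0,1]^s}\wal_{\bsk}(\bssigma)\overline{\wal_{\bsl}(\bssigma)}\rd\bssigma$, which equals $1$ if $\bsk=\bsl$ and $0$ otherwise. This already forces $\hat{\Kcal}_{\sh,\phi_b}(\bsk,\bsl)=0$ whenever $\bsk\neq\bsl$ and reduces the diagonal case to evaluating $J:=\int_{[0,1]^{2s}}\Kcal(\phi_b(\bsv),\phi_b(\bsz))\overline{\wal_{\bsk}(\bsv)}\wal_{\bsk}(\bsz)\rd\bsv\rd\bsz$.

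The core of the proof is then to compute $J$ using Theorem~\ref{thm:Walsh_coeff_bbt}. Since $\phi_b$ acts componentwise and Walsh functions factor over coordinates, I would first record the $s$-dimensional analogue of Theorem~\ref{thm:Walsh_coeff_bbt}, namely that for $g\in\Lcal_2([0,1]^s)$ one has $\widehat{(g\circ\phi_b)}(\bsk)=\hat{g}(\lfloor\bsk/b\rfloor)$ if $\bsk\in(\Ecal_b\cup\{0\})^s$ and $0$ otherwise; this follows by applying the one-dimensional statement coordinate by coordinate (iterated Fubini), or by checking it on product functions and extending by linearity and $\Lcal_2$-continuity of the Walsh functionals. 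Applying this in the $\bsv$-block, with $g(\cdot)=\Kcal(\cdot,\phi_b(\bsz))$ and $\phi_b(\bsz)$ treated as a parameter, shows that $J=0$ unless $\bsk\in(\Ecal_b\cup\{0\})^s$, and otherwise replaces the inner integral by $\int_{[0,1]^s}\Kcal(\bsv',\phi_b(\bsz))\overline{\wal_{\lfloor\bsk/b\rfloor}(\bsv')}\rd\bsv'$, i.e.\ it turns $\phi_b(\bsv)$ into a free variable $\bsv'$ and lowers the index from $\bsk$ to $\lfloor\bsk/b\rfloor$.

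It remains to carry out the $\bsz$-integration, and here the one subtlety I anticipate is that the surviving integral carries $\wal_{\bsk}(\bsz)$ rather than $\overline{\wal_{\bsk}(\bsz)}$, so Theorem~\ref{thm:Walsh_coeff_bbt} cannot be applied verbatim. I would resolve this by conjugation: writing the remaining function of $\bsz$ as $H\circ\phi_b$ with $H(\bsz')=\int_{[0,1]^s}\Kcal(\bsv',\bsz')\overline{\wal_{\lfloor\bsk/b\rfloor}(\bsv')}\rd\bsv'$, and using that $\phi_b$ (and $\Kcal$) are real-valued so that $\overline{H\circ\phi_b}=\bar{H}\circ\phi_b$, one gets $\int_{[0,1]^s}(H\circ\phi_b)(\bsz)\wal_{\bsk}(\bsz)\rd\bsz=\overline{\widehat{(\bar{H}\circ\phi_b)}(\bsk)}$. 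Since we are already in the case $\bsk\in(\Ecal_b\cup\{0\})^s$, Theorem~\ref{thm:Walsh_coeff_bbt} evaluates this to $\overline{\hat{\bar{H}}(\lfloor\bsk/b\rfloor)}=\int_{[0,1]^s}H(\bsz')\wal_{\lfloor\bsk/b\rfloor}(\bsz')\rd\bsz'$. Substituting back the definition of $H$ collapses $J$ into $\int_{[0,1]^{2s}}\Kcal(\bsv',\bsz')\overline{\wal_{\lfloor\bsk/b\rfloor}(\bsv')}\wal_{\lfloor\bsk/b\rfloor}(\bsz')\rd\bsv'\rd\bsz'=\hat{\Kcal}(\lfloor\bsk/b\rfloor,\lfloor\bsk/b\rfloor)$, which is the claimed formula. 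I expect this conjugate bookkeeping, together with the clean statement of the $s$-dimensional extension, to be the main (though minor) technical obstacle; everything else is a direct application of Proposition~\ref{prop:walsh} and Fubini's theorem. Finally, the asserted Walsh series expansion of $\Kcal_{\sh,\phi_b}$ follows at once by inserting this coefficient formula into the general expansion and discarding the vanishing terms.
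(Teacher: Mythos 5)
Your proposal is correct, and its first half---substituting the definition of $\Kcal_{\sh,\phi_b}$, removing the shift by the measure-preserving change of variables, and using orthonormality of the Walsh system to kill every off-diagonal coefficient---is exactly the paper's argument. Where you genuinely diverge is the diagonal case. The paper does not invoke Theorem~\ref{thm:Walsh_coeff_bbt} as a black box: it re-runs that theorem's interval-splitting computation jointly in all $2s$ variables, arriving at
$\prod_{j=1}^{s}\frac{1}{b}\sum_{\xi_j=0}^{b-1}\omega_b^{(b-\xi_j)\delta_b(k_j)}\cdot\prod_{j=1}^{s}\frac{1}{b}\sum_{\eta_j=0}^{b-1}\overline{\omega_b^{(b-\eta_j)\delta_b(k_j)}}$
times $\hat{\Kcal}(\lfloor \bsk/b\rfloor,\lfloor \bsk/b\rfloor)$, which is nonzero precisely when $\delta_b(k_j)\equiv 0 \pmod b$ for all $j$. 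You instead formulate an $s$-dimensional extension of Theorem~\ref{thm:Walsh_coeff_bbt} and apply it twice, once per block, fixing by explicit conjugation the fact that the $\bsz$-block carries $\wal_{\bsk}(\bsz)$ rather than $\overline{\wal_{\bsk}(\bsz)}$. Both routes are valid and yield the same coefficient. Your modular route buys reusability (the multivariate statement is proved once, and the reduction to the one-dimensional case by iterated Fubini or by density is standard), at the cost of having to state that extension and carry the conjugation bookkeeping; the paper's inline computation avoids both, since the conjugated geometric sums $\frac{1}{b}\sum_{\eta_j}\overline{\omega_b^{(b-\eta_j)\delta_b(k_j)}}$ are handled on the same footing as the unconjugated ones---each equals $1$ or $0$ under exactly the same condition on $\delta_b(k_j)$---so the asymmetry you correctly anticipated never surfaces there. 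The final assertion about the Walsh series expansion is, in both treatments, an immediate consequence of the coefficient formula, since the expansion is only asserted formally (with $\sim$).
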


\begin{proof}
The proof is essentially the same as the proof of \cite[Theorem~2]{CDLP07}. From the definition of $\Kcal_{\sh,\phi_b}$ and applying Item 4 of Proposition \ref{prop:walsh}, we have
  \begin{align*}
     \hat{\Kcal}_{\sh,\phi_b}(\bsk,\bsl) = & \int_{[0,1]^{2s}}\int_{[0,1]^s}\Kcal(\phi_b(\bsx\oplus \bssigma),\phi_b(\bsy\oplus \bssigma)) \overline{\wal_{\bsk}(\bsx)}\wal_{\bsl}(\bsy)\rd \bssigma \rd \bsx \rd \bsy \\
     = & \int_{[0,1]^{2s}}\int_{[0,1]^s}\Kcal(\phi_b(\bsx),\phi_b(\bsy)) \overline{\wal_{\bsk}(\bsx\ominus \bssigma)}\wal_{\bsl}(\bsy\ominus \bssigma)\rd \bssigma \rd \bsx \rd \bsy \\
     = & \int_{[0,1]^{2s}}\Kcal(\phi_b(\bsx),\phi_b(\bsy)) \overline{\wal_{\bsk}(\bsx)}\wal_{\bsl}(\bsy) \rd \bsx \rd \bsy\int_{[0,1]^s}\wal_{\bsk\ominus \bsl}(\bssigma) \rd \bssigma .
  \end{align*}
Here we have from Item 2 of Proposition \ref{prop:walsh} that
  \begin{align*}
     \int_{[0,1]^s}\wal_{\bsk\ominus \bsl}(\bssigma) \rd \bssigma = \left\{ \begin{array}{ll}
     1 & \text{if} \quad \bsk=\bsl ,  \\
     0 & \text{otherwise} .  \\
    \end{array} \right.
  \end{align*}
Thus we have $\hat{\Kcal}_{\sh,\phi_b}(\bsk,\bsl)=0$ for $\bsk\ne \bsl$. Otherwise for $\bsk=\bsl$, by following an argument similar to the proof of Theorem \ref{thm:Walsh_coeff_bbt}, we divide the interval $[0,1]$ into the $b$ intervals with the same length $[0,1/b),[1/b,2/b),\dots,[(b-1)/b,1]$ for $2s$ variables $x_1,\dots,x_s$ and $y_1,\dots,y_s$, and transform the variables in each interval. Then we have
  \begin{align*}
     \hat{\Kcal}_{\sh,\phi_b}(\bsk,\bsk) = & \int_{[0,1]^{2s}}\Kcal(\phi_b(\bsx),\phi_b(\bsy)) \overline{\wal_{\bsk}(\bsx)}\wal_{\bsk}(\bsy) \rd \bsx \rd \bsy \\
     = & \frac{1}{b^{2s}}\sum_{\xi_1,\dots,\xi_s,\eta_1,\dots,\eta_s=0}^{b-1}\prod_{j=1}^{s}\omega_b^{(b-\xi_j)\delta_b(k_j)}\overline{\omega_b^{(b-\eta_j)\delta_b(k_j)}}\hat{\Kcal}(\lfloor \bsk/b\rfloor, \lfloor \bsk/b\rfloor) \\
     = & \prod_{j=1}^{s}\frac{1}{b}\sum_{\xi_j=0}^{b-1}\omega_b^{(b-\xi_j)\delta_b(k_j)}\prod_{j=1}^{s}\frac{1}{b}\sum_{\eta_j=0}^{b-1}\overline{\omega_b^{(b-\eta_j)\delta_b(k_j)}}\hat{\Kcal}(\lfloor \bsk/b\rfloor, \lfloor \bsk/b\rfloor) \\
     = & \left\{ \begin{array}{ll}
     \hat{\Kcal}(\lfloor \bsk/b\rfloor, \lfloor \bsk/b\rfloor) & \text{if}\quad \delta_b(k_j)\equiv 0 \pmod b\quad \text{for}\quad 1\le j\le s ,  \\
     0 & \text{otherwise} .  \\
    \end{array} \right.
  \end{align*}
Hence the result follows.
\end{proof}
\noindent
Combining (\ref{eq:worst-case_error}), and Theorems \ref{thm:mse_bbt} and \ref{thm:reproducing_kernel}, we have a formula for the mean square worst-case error.
\begin{theorem}\label{thm:mse_formula}
Let $\Kcal, \Kcal_{\sh,\phi_b}\in \Lcal_2([0,1]^{2s})$ be a reproducing kernel and its folded digitally shifted reproducing kernel, respectively, such that $\int_{[0,1]^s}\sqrt{\Kcal(\bsx,\bsx)}\rd \bsx<\infty$. Suppose that the Walsh series expansion of $\Kcal_{\sh,\phi_b}$ converges to $\Kcal_{\sh,\phi_b}$ pointwise absolutely. For a point set $P_{N,s}$, we denote by $\phi_b(P_{N,s}\oplus \bssigma)$ the point set $P_{N,s}$ that is digitally shifted by a randomly chosen $\bssigma$ and then folded using the $b$-TT. Then the mean square worst-case error of $\phi_b(P_{N,s}\oplus \bssigma)$ is given by
  \begin{align*}
     \hat{e}^2(P_{N,s},\Kcal) = \sum_{\bsk\in (\Ecal_b\cup\{0\})^s\setminus \{\bszero\}}\hat{\Kcal}(\lfloor \bsk/b\rfloor,\lfloor \bsk/b\rfloor)\frac{1}{N^2}\sum_{\bsx,\bsy\in P_{N,s}}\wal_{\bsk}(\bsx) \overline{\wal_{\bsk}(\bsy)} ,
  \end{align*}
where $\bszero$ is the vector consisting of $s$ zeros. In particular when $P_{b^m,s}$ is a digital net over $\ZZ_b$ with $N=b^m$, we have
  \begin{align*}
     \hat{e}^2(P_{b^m,s},\Kcal) = \sum_{\substack{\bsk\in (\Ecal_b\cup\{0\})^s\setminus \{\bszero\}\\ \bsk\in D^{\perp}(P_{b^m,s})}}\hat{\Kcal}(\lfloor \bsk/b\rfloor,\lfloor \bsk/b\rfloor) ,
  \end{align*}
where $D^{\perp}(P_{b^m,s})$ is the dual net of $P_{b^m,s}$.
\end{theorem}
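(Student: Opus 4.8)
The plan is to assemble the statement from the three results flagged immediately before it, performing no new analysis beyond bookkeeping with Walsh coefficients. First I would apply Theorem~\ref{thm:mse_bbt} to rewrite the randomized, folded mean square worst-case error as the ordinary square worst-case error of the \emph{unfolded} point set $P_{N,s}$ measured against the folded digitally shifted kernel, that is, $\hat{e}^2(P_{N,s},\Kcal)=e^2(P_{N,s},\Kcal_{\sh,\phi_b})$. Feeding $\Kcal_{\sh,\phi_b}$ into the closed form (\ref{eq:worst-case_error}) then expresses this as the usual three-term combination of a double integral, a single-point average, and the double point-set sum of $\Kcal_{\sh,\phi_b}$. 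Into each term I would substitute the explicit Walsh series of $\Kcal_{\sh,\phi_b}$ furnished by Theorem~\ref{thm:reproducing_kernel}, namely $\Kcal_{\sh,\phi_b}(\bsx,\bsy)=\sum_{\bsk\in(\Ecal_b\cup\{0\})^s}\hat{\Kcal}(\lfloor\bsk/b\rfloor,\lfloor\bsk/b\rfloor)\wal_{\bsk}(\bsx)\overline{\wal_{\bsk}(\bsy)}$, interchanging summation with the integrals (and with the finite point-set sums).

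The next step is to evaluate the three terms using the orthogonality relations of Proposition~\ref{prop:walsh}. By Item~2 (equivalently Item~3 with the second index $\bszero$), integrating $\wal_{\bsk}$ over $[0,1]^s$ annihilates every frequency except $\bsk=\bszero$, so both the double-integral term and the single-point-average term collapse to $\hat{\Kcal}(\bszero,\bszero)$, contributing $+1$ and $-2$ copies of it respectively. In the double point-set sum, the $\bsk=\bszero$ frequency contributes $\hat{\Kcal}(\bszero,\bszero)\cdot N^{-2}\sum_{\bsx,\bsy}1=\hat{\Kcal}(\bszero,\bszero)$, one more copy. These three copies cancel ($1-2+1=0$), which is exactly why $\bszero$ is removed from the index set, leaving the first displayed formula with the sum running over $(\Ecal_b\cup\{0\})^s\setminus\{\bszero\}$.

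For the specialization to a digital net $P_{b^m,s}$, the remaining factor $N^{-2}\sum_{\bsx,\bsy\in P_{b^m,s}}\wal_{\bsk}(\bsx)\overline{\wal_{\bsk}(\bsy)}$ factors as $|b^{-m}\sum_{\bsx}\wal_{\bsk}(\bsx)|^2$, and Lemma~\ref{lem:dual_Walsh} evaluates the inner character sum to $b^m$ or $0$ according as $\bsk\in D^{\perp}(P_{b^m,s})$ or not; hence the factor is $1$ on the dual net and $0$ off it, yielding the second formula. The proof is otherwise routine, and the only genuinely delicate point is the term-by-term integration in the first step: this is precisely what the standing hypothesis of pointwise absolute convergence of the Walsh series is there to license, so I would invoke that hypothesis at the moment of interchanging $\sum_{\bsk}$ with the integrals rather than attempt any independent convergence argument.
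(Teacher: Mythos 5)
Your proposal is correct, but it handles the two integral terms differently from the paper, and the difference is worth noting. The paper never substitutes the Walsh series into the integrals at all: it shows directly from the definition of $\Kcal_{\sh,\phi_b}$, using the digital-shift invariance (Item 4 of Proposition \ref{prop:walsh}), Theorem \ref{thm:integral_bbt} and Fubini's theorem, that $\int_{[0,1]^s}\Kcal_{\sh,\phi_b}(\bsx,\bsy)\rd \bsy=\hat{\Kcal}(\bszero,\bszero)$ for \emph{every} $\bsx$, so the first two terms of (\ref{eq:worst-case_error}) contribute $\hat{\Kcal}(\bszero,\bszero)-2\hat{\Kcal}(\bszero,\bszero)=-\hat{\Kcal}(\bszero,\bszero)$ with no infinite series in sight; the pointwise-convergence hypothesis is then invoked only to expand the \emph{finite} double point-set sum, where interchanging the series with the sum is trivial, and the $\bsk=\bszero$ term cancels the $-\hat{\Kcal}(\bszero,\bszero)$. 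You instead expand all three terms in the Walsh series and kill the nonzero frequencies by orthogonality, which makes Theorem \ref{thm:integral_bbt} unnecessary but forces you to integrate an infinite series term by term. Your appeal to the pointwise absolute convergence hypothesis to license that interchange is legitimate, but it deserves the one-line argument you omit: since $|\wal_{\bsk}(\bsx)\overline{\wal_{\bsk}(\bsy)}|=1$, absolute convergence at a single point is the same as $\sum_{\bsk\in(\Ecal_b\cup\{0\})^s}|\hat{\Kcal}(\lfloor \bsk/b\rfloor,\lfloor \bsk/b\rfloor)|<\infty$, so the series converges uniformly (Weierstrass $M$-test) and dominated convergence (or Fubini--Tonelli) justifies term-by-term integration over the finite-measure domain $[0,1]^{2s}$. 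Pointwise absolute convergence of a general function series would \emph{not} suffice for this step; it works here only because the Walsh factors have constant modulus. With that sentence added, your route is a fully valid, slightly more uniform alternative to the paper's, at the cost of a convergence justification the paper's arrangement deliberately avoids; the remainder (the cancellation $1-2+1=0$ and the dual-net evaluation via Lemma \ref{lem:dual_Walsh}) matches the paper exactly.
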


\begin{proof}
Using Item 4 of Proposition \ref{prop:walsh}, Theorem \ref{thm:integral_bbt} and Fubini's theorem, we have
  \begin{align*}
     \int_{[0,1]^s}\Kcal_{\sh,\phi_b}(\bsx,\bsy)\rd \bsy = & \int_{[0,1]^{2s}}\Kcal(\phi_b(\bsx\oplus \bssigma),\phi_b(\bsy\oplus \bssigma)) \rd \bsy \rd \bssigma \\
     = & \int_{[0,1]^{2s}}\Kcal(\phi_b(\bsx\oplus \bssigma),\phi_b(\bsy)) \rd \bsy \rd \bssigma \\
     = & \int_{[0,1]^{2s}}\Kcal(\phi_b(\bsx\oplus \bssigma),\bsy) \rd \bsy \rd \bssigma \\
     = & \int_{[0,1]^{2s}}\Kcal(\phi_b(\bssigma),\bsy) \rd \bssigma \rd \bsy \\
     = & \int_{[0,1]^{2s}}\Kcal(\bssigma,\bsy) \rd \bssigma \rd \bsy = \hat{\Kcal}(\bszero, \bszero) ,
  \end{align*}
for any $\bsx\in [0,1]^s$. Using this result, it follows from (\ref{eq:worst-case_error}), and Theorems \ref{thm:mse_bbt} and \ref{thm:reproducing_kernel} that for a point set $P_{N,s}$ we have
  \begin{align*}
     \hat{e}^2(P_{N,s},\Kcal) = & -\hat{\Kcal}(\bszero, \bszero)+\frac{1}{N^2}\sum_{\bsx,\bsy\in P_{N,s}}\Kcal_{\sh,\phi_b}(\bsx,\bsy) \\
     = & -\hat{\Kcal}(\bszero, \bszero)+\sum_{\bsk\in (\Ecal_b\cup\{0\})^s}\hat{\Kcal}(\lfloor \bsk/b\rfloor,\lfloor \bsk/b\rfloor)\frac{1}{N^2}\sum_{\bsx,\bsy\in P_{N,s}}\wal_{\bsk}(\bsx)\overline{\wal_{\bsk}(\bsy)} .
  \end{align*}
Hence the first part of the theorem follows.

We now suppose that $P_{b^m,s}$ is a digital net over $\ZZ_b$. As mentioned in Subsection \ref{subsec:digital_net}, $P_{b^m,s}$ can be identified with a subgroup of $\ZZ_b^{s\times n}$ with $n\ge m$. Thus, using Lemma \ref{lem:dual_Walsh}, we have
  \begin{align*}
     \frac{1}{b^{2m}}\sum_{\bsx,\bsy\in P_{b^m,s}}\wal_{\bsk}(\bsx)\overline{\wal_{\bsk}(\bsy)} = & \frac{1}{b^{2m}}\sum_{\bsx\in P_{b^m,s}}\wal_{\bsk}(\bsx) \overline{\sum_{\bsy\in P_{b^m,s}}\wal_{\bsk}(\bsy)}\\
     = & \left\{ \begin{array}{ll}
     1 & \text{if} \quad \bsk\in D^{\perp}(P_{b^m,s}) ,  \\
     0 & \text{otherwise} .  \\
    \end{array} \right.
  \end{align*}
Hence the second part of the theorem follows.
\end{proof}

In Proposition~\ref{prop:the Walsh expansion of kernel} below, we give a sufficient condition that the Walsh series expansion of $\Kcal_{\sh,\phi_b}$ converges to $\Kcal_{\sh,\phi_b}$ pointwise absolutely. For $\bsx = (x_1, \dots, x_s), \bsx' = (x'_1, \dots, x'_s) \in [0,1]^s$
with $b$-adic expansions $x_j = \sum_{i=1}^{\infty}\xi_{i,j} b^{-i}$ and $x'_j = \sum_{i=1}^{\infty}\xi'_{i,j} b^{-i}$, respectively, we define
$$
v(x_j, x'_j) := \min\{i - 1 \mid \xi_{i,j} \neq \xi'_{i,j}\},
$$
where we set $v(x_j, x'_j) = \infty$ if $x_j = x'_j$, and
$$
v(\bsx, \bsx') := \min_{1 \leq j \leq s} v(x_j, x'_j).
$$
Further for $\bsx \in [0,1]^s$ and $N \in \nat$, we define a cube $I_N(\bsx) := \{\bsx' \in [0,1]^s \colon v(\bsx, \bsx') \geq N\}$. Note that each edge of $I_N(\bsx)$ is of length $b^{-N}$.
We need the following lemma.
\begin{lemma}\label{lem:measure-zero}
Let $\bsx = (x_1, \dots, x_s), \bsx' = (x'_1, \dots, x'_s) \in [0,1]^s$.
Assume that $\bsx' \in I_{N+1}(\bsx)$.
Then the Lebesgue measure of the set
$$
S := \{
\bsz \in [0,1]^s \colon
\phi_b(\bsx' \oplus \bsz) \not\in I_N(\phi_b(\bsx \oplus \bsz))
\}
$$ is $0$.
\end{lemma}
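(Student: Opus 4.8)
The plan is to trace exactly where the definition of $\phi_b$, which uses the \emph{canonical} $b$-adic expansion, can disagree with the digitwise operation $\oplus$, and to show that all such disagreements are confined to a null set. First I would restate the hypothesis $\bsx'\in I_{N+1}(\bsx)$ as: for every $j$ the canonical digits $\xi_{i,j}$ of $x_j$ and $\xi'_{i,j}$ of $x'_j$ satisfy $\xi_{i,j}=\xi'_{i,j}$ for $1\le i\le N+1$. Fixing $\bsz$ with canonical digits $\zeta_{i,j}$ and writing $u_j=x_j\oplus z_j$, $u'_j=x'_j\oplus z_j$, the defining digitwise sums are $\mu_{i,j}=\xi_{i,j}+\zeta_{i,j}\pmod b$ and $\mu'_{i,j}=\xi'_{i,j}+\zeta_{i,j}\pmod b$, so that $\mu_{i,j}=\mu'_{i,j}$ for $1\le i\le N+1$.

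The key step is to remove, for each coordinate $j$, the set $B_j$ of those $z_j\in[0,1]$ for which the sequence $(\mu_{i,j})_i$ or the sequence $(\mu'_{i,j})_i$ is eventually constant. I would show $B_j$ is countable: an eventually constant $\ZZ_b$-valued sequence is determined by a threshold $M\in\nat$, a constant $c\in\ZZ_b$, and the finitely many entries before position $M$, so there are only countably many of them; and each admissible sequence $(\mu_{i,j})_i$ pins down $z_j$ uniquely through $\zeta_{i,j}=\mu_{i,j}-\xi_{i,j}\pmod b$ (likewise for $(\mu'_{i,j})_i$). Hence each $B_j$ is a Lebesgue-null subset of $[0,1]$, and therefore $B:=\bigcup_{j=1}^s\{\bsz\in[0,1]^s\colon z_j\in B_j\}$ is null.

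It then remains to prove $S\subseteq B$, which I would do coordinatewise for $\bsz\notin B$. Since $(\mu_{i,j})_i$ is not eventually constant it is in particular not eventually equal to $b-1$, so it is the canonical expansion of $u_j$ and $\phi_b(u_j)$ has digits $\eta_{i,j}=\mu_{i+1,j}-\mu_{1,j}\pmod b$; moreover $(\eta_{i,j})_i$ could be eventually $b-1$ only if $(\mu_{i,j})_i$ were eventually constant, which is excluded, so $(\eta_{i,j})_i$ is itself the canonical expansion of $\phi_b(u_j)$. The identical reasoning applies to $u'_j$. For $1\le i\le N$ we have $i+1\le N+1$, hence $\eta_{i,j}=\mu_{i+1,j}-\mu_{1,j}=\mu'_{i+1,j}-\mu'_{1,j}=\eta'_{i,j}$, so the canonical digits of $\phi_b(u_j)$ and $\phi_b(u'_j)$ agree in positions $1,\dots,N$, i.e.\ $v(\phi_b(u_j),\phi_b(u'_j))\ge N$. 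Taking the minimum over $j$ yields $\phi_b(\bsx'\oplus\bsz)\in I_N(\phi_b(\bsx\oplus\bsz))$, so $\bsz\notin S$; thus $S\subseteq B$ is null.

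I expect the main obstacle to be precisely this canonicalization bookkeeping: the ``obvious'' digitwise sequence can fail to be the canonical expansion of $u_j$, and, independently, the sequence $(\eta_{i,j})_i$ can fail to be the canonical expansion of $\phi_b(u_j)$ (this already happens for certain non-$b$-adic-rational $u_j$ whose digits are eventually constant). The clean point that makes the proof work is that both failure modes are instances of $(\mu_{i,j})_i$ being eventually constant, so a single countable exceptional set in each coordinate controls them simultaneously.
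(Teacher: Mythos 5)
Your proof is correct and takes essentially the same route as the paper's: reduce to a single coordinate, excise a countable exceptional set of shifts $z$ outside of which the digitwise formulas are the canonical $b$-adic expansions, and then read off agreement of the first $N$ digits of $\phi_b(x\oplus z)$ and $\phi_b(x'\oplus z)$ from $v(x,x')\ge N+1$. The only difference is bookkeeping: the paper defines the exceptional set by demanding that $x\oplus z$, $x'\oplus z$, $\phi_b(x\oplus z)$ and $\phi_b(x'\oplus z)$ all avoid $b$-adic rationals, while you demand that the digitwise-sum sequences be not eventually constant (which simultaneously forces canonicity of the sum digits and of the $\phi_b$-image digits) --- two equally countable exclusions serving the identical purpose.
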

\begin{proof}
Since
$S = \cup_{1 \leq j \leq s}
\{\bsz=(z_1, \dots, z_s) \colon v(\phi_b(x_j \oplus z_j), \phi_b(x'_j \oplus z_j)) < N \}$,
it suffices to show that
the set
$T := \{z\in[0,1] \colon v(\phi_b(x \oplus z), \phi_b(x' \oplus z)) < N  \}$
is countable for $x, x' \in [0,1]$ with $v(x, x') \geq N+1$.
If $x \oplus z$ and $\phi_b(x \oplus z)$ are not $b$-adic rationals
for $x = \sum_{i=1}^{\infty}\xi_{i} b^{-i}$ and $z = \sum_{i=1}^{\infty}\zeta_{i} b^{-i}$,
then the $b$-adic expansion of $\phi_b(x \oplus z)$ is uniquely represented by
$$
\phi_b(x \oplus z) = \sum_{i=1}^\infty \eta_ib^{-i} \quad \text{with}\quad \eta_i=(\xi_{i+1} + \zeta_{i+1}) -(\xi_1 + \zeta_1) \pmod b.
$$
It follows that if $x \oplus z$, $x' \oplus z$, $\phi_b(x \oplus z)$ and $\phi_b(x' \oplus z)$ are not $b$-adic rationals,
\begin{align*}
  v(\phi_b(x \oplus z), \phi_b(x' \oplus z)) & = v(x \oplus z, x' \oplus z)-1 \\
  & = v(x , x')-1 \\
  & \ge (N+1) -1 =N,
\end{align*}
which violates the condition of $T$.
Thus we have
\begin{align*}
[0,1] \backslash T
&\supset
\{z\colon x \oplus z \text{ is not a $b$-adic rational} \}
\cap
\{z\colon x' \oplus z \text{ is not a $b$-adic rational} \}\\
&\quad \cap
\{z\colon \phi_b(x \oplus z) \text{ is not a $b$-adic rational} \}\\
&\quad \cap
\{z\colon \phi_b(x' \oplus z) \text{ is not a $b$-adic rational} \}.
\end{align*}
The complement of every set on the right-hand side is countable.
Hence the set $T$ is countable.
\end{proof}
In the following lemma, we first give a sufficient condition on a function $f:[0,1]^s\to \RR$ such that the Walsh series expansion of $f$ converges to $f$ pointwise absolutely.

\begin{lemma}\label{lem: convergence of the Walsh expansion}
Let $f \colon [0,1]^s \to \RR$ be a function.
We assume that the following conditions hold:
\begin{enumerate}
\item For every $\bsx \in [0,1]^s$ and every $\epsilon > 0$,
there exists $N \in \nat$ such that $|f(\bsx) - f(\bsx')| \leq \epsilon$ holds
for any $\bsx' \in I_N(\bsx)$.
\item $\sum_{\bsk \in \nat_0^s} | \hat{f}(\bsk) | < \infty$.
\end{enumerate}
Then we have pointwise absolute convergence
$$
f(\bsx) = \sum_{\bsk \in \nat_0^s} \hat{f}(\bsk) \wal_{\bsk}(\bsx).
$$
\end{lemma}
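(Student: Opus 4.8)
The plan is to compare $f$ with the partial sums of its Walsh series taken over the nested family of ``boxes'' $\{\bsk \in \nat_0^s : k_j < b^N \text{ for all } j\}$, and to show that these particular partial sums converge to $f(\bsx)$ by identifying each of them with a local average of $f$ over the cube $I_N(\bsx)$. Condition 1 will drive the averages to $f(\bsx)$, while Condition 2 will guarantee that this same sequence of partial sums represents the sum of the full (absolutely convergent) Walsh series.

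First I would compute the relevant Dirichlet kernel. For fixed $N \in \nat$ set
$$S_N(\bsx) := \sum_{\substack{\bsk \in \nat_0^s \\ k_j < b^N \text{ for all } j}} \hat{f}(\bsk)\,\wal_{\bsk}(\bsx).$$
Since this is a finite sum, I may insert $\hat{f}(\bsk) = \int_{[0,1]^s} f(\bsy)\,\overline{\wal_{\bsk}(\bsy)}\rd \bsy$ and exchange summation with integration to get
$$S_N(\bsx) = \int_{[0,1]^s} f(\bsy) \prod_{j=1}^s \Bigl( \sum_{k_j=0}^{b^N-1}\wal_{k_j}(x_j)\,\overline{\wal_{k_j}(y_j)}\Bigr)\rd \bsy.$$
Directly from Definition \ref{def:wal_1}, the inner sum factors over the first $N$ digits and equals $b^N$ exactly when the first $N$ $b$-adic digits of $x_j$ and $y_j$ coincide, and $0$ otherwise; hence the whole kernel equals $b^{Ns}$ on $I_N(\bsx)$ and vanishes elsewhere. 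Since $I_N(\bsx)$ has Lebesgue measure $b^{-Ns}$, this shows that $S_N(\bsx) = b^{Ns}\int_{I_N(\bsx)} f(\bsy)\rd \bsy$ is precisely the average of $f$ over the cube $I_N(\bsx)$.

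Next I would pass to the limit $N \to \infty$ in two complementary ways. On the one hand, Condition 1 provides, for each $\epsilon > 0$, an index $N$ with $|f(\bsx) - f(\bsx')| \leq \epsilon$ for all $\bsx' \in I_N(\bsx)$; because $I_{N'}(\bsx) \subseteq I_N(\bsx)$ whenever $N' \geq N$, the averaging identity above yields $|S_{N'}(\bsx) - f(\bsx)| \leq \epsilon$ for all such $N'$, so $S_N(\bsx) \to f(\bsx)$. On the other hand, Condition 2 together with $|\wal_{\bsk}(\bsx)| = 1$ shows that the full series $\sum_{\bsk \in \nat_0^s} \hat{f}(\bsk)\wal_{\bsk}(\bsx)$ converges absolutely; as the boxes $\{k_j < b^N\}$ exhaust $\nat_0^s$ as $N \to \infty$, unconditional convergence of an absolutely convergent series forces $S_N(\bsx)$ to converge to the total sum $\sum_{\bsk \in \nat_0^s} \hat{f}(\bsk)\wal_{\bsk}(\bsx)$. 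Equating the two limits gives the claimed pointwise identity, and the absolute convergence is already furnished by Condition 2.

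The kernel computation and the two limiting arguments are each short and routine; the only step that uses the hypotheses in an essential way is the pairing of Condition 1 with the averaging interpretation of $S_N(\bsx)$. I therefore expect the identification of the box partial sum $S_N(\bsx)$ with the average of $f$ over $I_N(\bsx)$ to be the crux of the argument: once it is in place, Condition 2 serves only to ensure that this same sequence of partial sums converges to the sum of the full absolutely convergent Walsh series, allowing the two limits to be identified.
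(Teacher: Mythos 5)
Your proposal is correct and follows essentially the same route as the paper's proof: both identify the box partial sums $\sum_{\bsk < b^N} \hat{f}(\bsk)\wal_{\bsk}(\bsx)$ with the average $b^{Ns}\int_{I_N(\bsx)} f(\bsy)\rd\bsy$, use Condition 1 to drive these averages to $f(\bsx)$, and use Condition 2 (absolute, hence unconditional, convergence) to identify their limit with the full Walsh series. The only cosmetic difference is that you compute the Dirichlet kernel directly from Definition \ref{def:wal_1}, whereas the paper cites \cite[Lemma~A.17]{DP10} for the same identity.
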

\begin{proof}
By the second assumption, we have
$$
\sum_{\bsk \in \nat_0^s} |\hat{f}(\bsk) \wal_{\bsk}(\bsx)|
= \sum_{\bsk \in \nat_0^s} |\hat{f}(\bsk)| < \infty,
$$
and thus $\sum_{\bsk \in \nat_0^s} \hat{f}(\bsk) \wal_{\bsk}(\bsx)$
converges absolutely.
Therefore it suffices to show that
$\lim_{l \to \infty} \sum_{\bsk < b^l} \hat{f}(\bsk) \wal_{\bsk} (\bsx) = f(\bsx)$,
where $\bsk < b^l$ means that $k_j < b^l$ holds for every $j$.
In fact, we have
\begin{align*}
\sum_{\bsk < b^l} \hat{f}(\bsk) \wal_{\bsk} (\bsx)
& = \sum_{\bsk < b^l} \int_{\bsy \in [0,1]^s} f(\bsy) \overline{\wal_{\bsk}(\bsy)} \rd \bsy \cdot \wal_{\bsk}(\bsx)\\
& = \int_{\bsy \in [0,1]^s} f(\bsy) \sum_{\bsk < b^l} \wal_{\bsk} (\bsx \ominus \bsy) \rd \bsy \\
& = b^{ls} \int_{\bsy \in I_l(\bsx)} f(\bsy) \rd \bsy \\
& \to f(\bsx) \quad \mathrm{as}\quad l\to \infty ,
\end{align*}
where we use \cite[Lemma~A.17]{DP10} in the third equality and the first assumption implies the last convergence.
\end{proof}
\noindent
Based on Lemma~\ref{lem: convergence of the Walsh expansion}, we give a sufficient condition that the Walsh series expansion of $\Kcal_{\sh,\phi_b}$ converges to $\Kcal_{\sh,\phi_b}$ pointwise absolutely.
\begin{proposition}\label{prop:the Walsh expansion of kernel}
Let $\Kcal(\bsx, \bsy) \colon [0,1]^{2s} \to \RR$ be continuous.
If
$$
\sum_{\bsk \in \nat_0^s} |\hat{\Kcal}(\bsk, \bsk)| < \infty,
$$
then the Walsh expansion of $\Kcal_{\sh,\phi_b}$ converges to $\Kcal_{\sh,\phi_b}$
pointwise absolutely. In particular, we have
$$
\Kcal_{\sh,\phi_b}(\bsx,\bsy) =
\sum_{\bsk \in \nat_0^s} \hat{\Kcal}(\lfloor \bsk/b\rfloor, \lfloor \bsk/b\rfloor)\wal_{\bsk}(\bsx)\overline{\wal_{\bsk}(\bsy)}.
$$
\end{proposition}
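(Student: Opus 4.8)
The plan is to apply Lemma~\ref{lem: convergence of the Walsh expansion} to $\Kcal_{\sh,\phi_b}$, regarded as a real-valued function on $[0,1]^{2s}$ (so with $2s$ in place of $s$), and then to read off its Walsh coefficients from Theorem~\ref{thm:reproducing_kernel}. Since $\Kcal$ is continuous on the compact set $[0,1]^{2s}$, it is bounded and uniformly continuous; hence $\Kcal_{\sh,\phi_b}$ is a well-defined bounded function, and the only thing to do is to verify the two hypotheses of Lemma~\ref{lem: convergence of the Walsh expansion}.

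For the summability hypothesis, I would invoke Theorem~\ref{thm:reproducing_kernel}: the nonzero Walsh coefficients of $\Kcal_{\sh,\phi_b}$ are precisely $\hat{\Kcal}_{\sh,\phi_b}(\bsk,\bsk)=\hat{\Kcal}(\lfloor\bsk/b\rfloor,\lfloor\bsk/b\rfloor)$ for $\bsk\in(\Ecal_b\cup\{0\})^s$, so the sum of their absolute values is $\sum_{\bsk\in(\Ecal_b\cup\{0\})^s}|\hat{\Kcal}(\lfloor\bsk/b\rfloor,\lfloor\bsk/b\rfloor)|$. The key observation is that the map $\bsk\mapsto\lfloor\bsk/b\rfloor$ restricted to $(\Ecal_b\cup\{0\})^s$ is a bijection onto $\nat_0^s$: componentwise, given a target $k'_j$ the lowest digit $\kappa_{0,j}$ of its preimage is forced by the constraint $\delta_b(k_j)\equiv 0\pmod b$ to be $(-\delta_b(k'_j))\bmod b$, which is the unique admissible choice. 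Consequently this sum equals $\sum_{\bsk'\in\nat_0^s}|\hat{\Kcal}(\bsk',\bsk')|$, finite by assumption; this settles the second hypothesis.

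The main work is the first (quasi-continuity) hypothesis, and this is exactly where Lemma~\ref{lem:measure-zero} is indispensable, since $\phi_b$ is itself discontinuous. Fix $(\bsx,\bsy)$ and $\epsilon>0$. By uniform continuity of $\Kcal$, choose $M$ so large that two points of $[0,1]^{2s}$ agreeing in their first $M$ $b$-adic digits in every coordinate (hence within $b^{-M}$ in each coordinate) have $\Kcal$-values differing by at most $\epsilon$. Set $N=M+1$ and take any $(\bsx',\bsy')\in I_N((\bsx,\bsy))$, that is, $\bsx'\in I_{M+1}(\bsx)$ and $\bsy'\in I_{M+1}(\bsy)$. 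Writing the difference of kernel values as $\int_{[0,1]^s}\bigl[\Kcal(\phi_b(\bsx\oplus\bssigma),\phi_b(\bsy\oplus\bssigma))-\Kcal(\phi_b(\bsx'\oplus\bssigma),\phi_b(\bsy'\oplus\bssigma))\bigr]\rd\bssigma$, I would apply Lemma~\ref{lem:measure-zero} to the $\bsx$-part and the $\bsy$-part separately: outside a $\bssigma$-set of measure zero one has $\phi_b(\bsx'\oplus\bssigma)\in I_M(\phi_b(\bsx\oplus\bssigma))$ and $\phi_b(\bsy'\oplus\bssigma)\in I_M(\phi_b(\bsy\oplus\bssigma))$, so by the choice of $M$ the integrand is at most $\epsilon$ for almost every $\bssigma$. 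Because $\Kcal$ is bounded, the measure-zero exceptional set contributes nothing, and integrating gives $|\Kcal_{\sh,\phi_b}(\bsx,\bsy)-\Kcal_{\sh,\phi_b}(\bsx',\bsy')|\le\epsilon$, which is precisely the first hypothesis.

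With both hypotheses in hand, Lemma~\ref{lem: convergence of the Walsh expansion} yields the pointwise absolute convergence of the Walsh series of $\Kcal_{\sh,\phi_b}$ to itself, and substituting the coefficients from Theorem~\ref{thm:reproducing_kernel} produces the claimed expansion. One bookkeeping point needs care: the kernel expansion used here carries the conjugated factor $\overline{\wal_{\bsl}(\bsy)}$, whereas Lemma~\ref{lem: convergence of the Walsh expansion} is phrased with the plain Walsh system. This is reconciled by the digitwise-negation reindexing $\bsl\mapsto\ominus\bsl$, under which $\overline{\wal_{\bsl}(\bsy)}=\wal_{\ominus\bsl}(\bsy)$; since $\ominus$ is an involutive bijection of $\nat_0^s$ it preserves the set of absolute values of coefficients, so the summability check above is unaffected. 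The genuinely hard step is the quasi-continuity bound, and its difficulty is caused entirely by the discontinuity of the folding $\phi_b$, which is circumvented by Lemma~\ref{lem:measure-zero} together with the averaging over the random shift $\bssigma$.
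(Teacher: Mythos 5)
Your proposal is correct and matches the paper's own proof in all essentials: both apply Lemma~\ref{lem: convergence of the Walsh expansion} to $\Kcal_{\sh,\phi_b}$ viewed as a function on $[0,1]^{2s}$, verifying the summability hypothesis via Theorem~\ref{thm:reproducing_kernel} and the quasi-continuity hypothesis via uniform continuity of $\Kcal$ combined with Lemma~\ref{lem:measure-zero}. The only differences are cosmetic refinements within the same approach: you evaluate the coefficient sum exactly through the digit bijection $(\Ecal_b\cup\{0\})^s\to\nat_0^s$ where the paper settles for the cruder multiplicity bound $b^{2s}\sum_{\bsk}|\hat{\Kcal}(\bsk,\bsk)|$, you apply Lemma~\ref{lem:measure-zero} to the $\bsx$- and $\bsy$-halves separately and take a union of null sets where the paper invokes it once for the joint $2s$-dimensional point, and you make explicit the $\bsl\mapsto\ominus\bsl$ reindexing that reconciles the conjugated kernel expansion with the plain Walsh system, a bookkeeping point the paper leaves tacit.
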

\begin{proof}
By Theorem~\ref{thm:reproducing_kernel} we have
\begin{align*}
\sum_{\bsk, \bsl \in \nat_0^s} |\hat{\Kcal}_{\sh,\phi_b}(\bsk, \bsl)|
&\leq \sum_{\bsk \in \nat_0^s} |\hat{\Kcal}(\lfloor \bsk/b\rfloor, \lfloor \bsk/b\rfloor)|\\
&\leq b^{2s} \sum_{\bsk \in \nat_0^s} |\hat{\Kcal}(\bsk, \bsk)|
< \infty,
\end{align*}
and thus $\Kcal_{\sh,\phi_b}$ satisfies the second assumption of Lemma~\ref{lem: convergence of the Walsh expansion}.
Therefore it suffices to show that $\Kcal_{\sh,\phi_b}$ satisfies the first assumption of Lemma~\ref{lem: convergence of the Walsh expansion}.
Let $\bsx, \bsy \in [0,1]^s$ and $\epsilon > 0$.
Since $\Kcal$ is continuous on a compact topological space $[0,1]^{2s}$,
$\Kcal$ is uniformly continuous.
Hence there exists a positive integer $N$ such that
$|\Kcal(\bsx', \bsy') - \Kcal(\bsx'', \bsy'')| < \epsilon$ holds
for any $\bsx', \bsx'', \bsy', \bsy'' \in [0,1]^s$
with $(\bsx'', \bsy'') \in I_{N}((\bsx', \bsy'))$.
Suppose that $(\bsx', \bsy') \in I_{N + 1}((\bsx, \bsy))$.
By Lemma~\ref{lem:measure-zero}, the Lebesgue measure of the set 
$\{\bssigma \in [0,1]^s \colon (\phi_b(\bsx' \oplus \bssigma), \phi_b(\bsy' \oplus \bssigma))
\not\in I_N((\phi_b(\bsx \oplus \bssigma), \phi_b(\bsy \oplus \bssigma)))\}$ is $0$.
Therefore we have
\begin{align*}
& |\Kcal_{\sh,\phi_b}(\bsx, \bsy) - \Kcal_{\sh,\phi_b}(\bsx', \bsy')| \\
= & \left| \int_{[0,1]^{s}}\Kcal(\phi_b(\bsx \oplus \bssigma), \phi_b(\bsy \oplus \bssigma)) -	\Kcal(\phi_b(\bsx' \oplus \bssigma), \phi_b(\bsy' \oplus \bssigma))	\rd \bssigma \right| \\
\leq & \int_{[0,1]^{s}}
|\Kcal(\phi_b(\bsx \oplus \bssigma), \phi_b(\bsy \oplus \bssigma)) -
	\Kcal(\phi_b(\bsx' \oplus \bssigma), \phi_b(\bsy' \oplus \bssigma))|
	\rd \bssigma \\
\leq & \int_{[0,1]^{s}} \epsilon \rd \bssigma = \epsilon. \qedhere
\end{align*}
\end{proof}

\begin{remark}
Although Cristea et al.~\cite{CDLP07} skip the above argument, Proposition \ref{prop:the Walsh expansion of kernel} gives a sufficient condition to have Theorem \ref{thm:mse_formula} and \cite[Theorem~3]{CDLP07} also, since we have to suppose that the Walsh series expansion of $\Kcal_{\sh,\phi_b}$ converges to $\Kcal_{\sh,\phi_b}$ pointwise absolutely.

We further note that there is a proof that the digital shift invariant kernel is equal to its Walsh expansion with some condition in \cite[Lemma~12.2]{DP10}. The statement of the lemma itself is correct, although the proof uses the following incorrect claim. For $\bsx=(x_1,\ldots,x_s),\bsy=(y_1,\ldots,y_s)\in [0,1)^s$ with $x_i=\xi_{i,1}b^{-1}+\xi_{i,2}b^{-2}+\cdots$ and $y_i=\eta_{i,1}b^{-1}+\eta_{i,2}b^{-2}+\cdots$, if $\max_{1\le i\le s}|x_i-y_i|<b^{-a}$ for some $a\in \nat$, it follows that $\xi_{i,k}=\eta_{i,k}$ for all $1\le i\le s$ and $1\le k\le a$. However, this is {\em not} always the case. A counterexample is given by setting
$$\xi_{i,1}=1,\quad \xi_{i,2}=\xi_{i,3}=\cdots=0,$$ and $$\eta_{i,1}=0,\quad \eta_{i,2}=\cdots = \eta_{i,a+1}=b-1,\quad \eta_{i,a+2}=\eta_{i,a+3}=\cdots=0,$$ for all $1\le i\le s$. For such $\bsx,\bsy$, we have $\max_{1\le i\le s}|x_i-y_i|<b^{-a}$, but {\em not} $\xi_{i,k}=\eta_{i,k}$ for all $1\le i\le s$ and $1\le k\le a$. Hence, the continuity of the reproducing kernel does {\em not} necessarily imply the continuity of its digital shift invariant kernel in the usual topology of $[0,1]^s$. The proof can be fixed in a way similar to our proof above.

%The lemma itself is correct, although the proof uses the incorrect claim that if the kernel is continuous its digital shift invariant kernel is also continuous (in the usual topology of $[0,1]^s$). The proof can be fixed in a way similar to our proof above.
\end{remark}

%%%%%%%%%%%%%%%%%%%%%%%%%%%%%%%%%%%%%%%%%%%%%%%%%%%%%%%%%%%
\section{Existence of good higher order polynomial lattice rules in unanchored Sobolev spaces}\label{sec:hopoly}
Finally, in this section, we consider unanchored Sobolev spaces of smoothness of arbitrary high order $\alpha\ge 2$ as $\Hcal$, denoted by $\Hcal_{\alpha,\bsgamma}$, and higher order polynomial lattice point sets over $\ZZ_b$ as $P_{b^m,s}$ for a prime $b$. Our goal here is to prove the existence of good higher order polynomial lattice rules which achieve almost the optimal rate of the mean square worst-case error in these spaces when $n\ge \alpha m/2$. As shown in \cite{S63}, we cannot achieve the convergence rate of the mean square worst-case error of order $b^{-2\alpha m}$ in $\Hcal_{\alpha,\bsgamma}$. Thus, the convergence rate of order $b^{-2\alpha m+\epsilon}$ with arbitrary small $\epsilon>0$ is almost optimal.

\subsection{A bound on the mean square worst-case error}
First we follow the expositions of \cite{BD09,DP07} to introduce the reproducing kernel Hilbert space $\Hcal_{\alpha,\bsgamma}$ that we consider in this section. Let $\alpha$ be a positive integer greater than 1 and let $\bsgamma=(\gamma_u)_{u\subseteq \{1,\dots,s\}}$ be a set of non-negative numbers.

Here $\bsgamma$ are called {\em weights} and play a major role in moderating the importance of different variables or groups of variables in the space $\Hcal_{\alpha,\bsgamma}$ and also in analyzing the information complexity that is defined as the minimum number of points $N(\varepsilon,s)$ required to reduce the initial error by a factor $\varepsilon\in (0,1)$, see \cite{SW98}. Our particular interest is to give sufficient conditions on the weights when the bound on $N(\varepsilon,s)$ does not depend on the dimension, or does depend only polynomially on the dimension, see Corollary \ref{cor:tractability}.

For $\bsx,\bsy\in [0,1]^s$, the reproducing kernel of $\Hcal_{\alpha,\bsgamma}$ is given by
  \begin{align}\label{eq:Sobolev_kernel}
    \Kcal_{\alpha,\bsgamma}(\bsx,\bsy)= & \sum_{u\subseteq \{1,\ldots,s\}}\gamma_u \prod_{j\in u}\Kcal_{\alpha,(1)}(x_j,y_j) \nonumber \\
    = & \sum_{u\subseteq \{1,\ldots,s\}}\gamma_u \prod_{j\in u}\left( \sum_{\tau=1}^{\alpha}\frac{B_{\tau}(x_j)B_{\tau}(y_j)}{(\tau !)^2}+(-1)^{\alpha+1}\frac{B_{2\alpha}(|x_j-y_j|)}{(2\alpha)!}\right) ,
  \end{align}
where $B_{\tau}$ denotes the Bernoulli polynomial of degree $\tau$, and $x_j$ and $y_j$ denote the $j$-th coordinates of $\bsx$ and $\bsy$, respectively. In the following, for $f\in \Hcal_{\alpha,\bsgamma}$ and a vector $(\alpha_1,\dots,\alpha_s)\in \nat_0^s$ with $0\le \alpha_j\le \alpha$ for all $j$, we denote by $f^{(\alpha_1,\dots,\alpha_s)}$ the partial mixed derivative of $f$ of $(\alpha_1,\dots,\alpha_s)$-th order. Then, for $f,g\in \Hcal_{\alpha,\bsgamma}$, the inner product is defined as
  \begin{align*}
    \langle f,g\rangle_{\Hcal_{\alpha,\bsgamma}} = & \sum_{u\subseteq \{1,\ldots,s\}}\gamma_u^{-1}\sum_{v\subseteq u}\sum_{\bstau_{u\setminus v}\in \{1,\ldots, \alpha-1\}^{|u\setminus v|}}\\
    & \int_{[0,1]^{|v|}} \left(\int_{[0,1]^{s-|v|}}f^{(\bstau_{u\setminus v},\bsalpha_v,\bszero)}(\bsx)\rd\bsx_{-v}\right) \\
    & \times \left(\int_{[0,1]^{s-|v|}}g^{(\bstau_{u\setminus v},\bsalpha_v,\bszero)}(\bsx)\rd\bsx_{-v}\right)\rd\bsx_{v} ,
  \end{align*}
where we use the following notation: For $\bstau_{u\setminus v}=(\tau_j)_{j\in u\setminus v}$, we denote by $(\bstau_{u\setminus v},\bsalpha_v,\bszero)$ the vector in which the $j$-th component is $\tau_j$ for $j\in u\setminus v$, $\alpha$ for $j\in v$, and 0 for $\{1,\ldots,s\}\setminus u$. For $v\subseteq \{1,\ldots,s\}$, we simply write $-v:=\{1,\ldots,s\}\setminus v$, $\bsx_{v}=(x_j)_{j\in v}$ and $\bsx_{-v}=(x_j)_{j\in -v}$. As in \cite{BD09}, for $u\subseteq \{1,\ldots,s\}$ such that $\gamma_u=0$, we assume that the corresponding inner double sum equals 0 and we set $0/0=0$.

We now consider the Walsh series expansion of $\Kcal_{\alpha,\bsgamma}$,
  \begin{align*}
     \Kcal_{\alpha,\bsgamma}(\bsx,\bsy) = & \sum_{\bsk,\bsl\in \nat_0^s}\hat{\Kcal}_{\alpha,\bsgamma}(\bsk,\bsl)\wal_{\bsk}(\bsx)\overline{\wal_{\bsl}(\bsy)} \\
     = & \sum_{u,v\subseteq \{1,\dots,s\}}\sum_{\bsk_u\in \nat^{|u|}}\sum_{\bsl_v\in \nat^{|v|}}\hat{\Kcal}_{\alpha,\bsgamma}((\bsk_u,\bszero),(\bsl_v,\bszero))\wal_{(\bsk_u,\bszero)}(\bsx)\overline{\wal_{(\bsl_v,\bszero)}(\bsy)},
  \end{align*}
where we write $(\bsk_u,\bszero)$ for the $s$-dimensional vector whose $j$-th component is $k_j$ if $j\in u$ and zero otherwise, and we use the same notation for $(\bsl_v,\bszero)$. According to \cite[Section~3]{BD09}, we have the following.

\begin{lemma}
For $u,v\subseteq \{1,\dots,s\}$, $\bsk_u\in \nat^{|u|}$ and $\bsl_v\in \nat^{|v|}$, the $((\bsk_u,\bszero),(\bsl_v,\bszero))$-th Walsh coefficient is given by
  \begin{align*}
     \hat{\Kcal}_{\alpha,\bsgamma}((\bsk_u,\bszero),(\bsl_v,\bszero)) = & \left\{ \begin{array}{ll}
     \gamma_u \prod_{j\in u}\hat{\Kcal}_{\alpha,(1)}(k_j,l_j) & \text{if} \quad u=v ,  \\
     0 & \text{otherwise} .  \\
    \end{array} \right.
  \end{align*}
A bound on the Walsh coefficients $\hat{\Kcal}_{\alpha,(1)}(k,l)$ for $k,l\in \nat$ is given as
  \begin{align*}
     \left| \hat{\Kcal}_{\alpha,(1)}(k,l) \right| \le C_{\alpha,b}b^{-\mu_{\alpha}(k)-\mu_{\alpha}(l)}  ,
  \end{align*}
where the constant $C_{\alpha,b}$ depends only on $\alpha$ and $b$, and $\mu_{\alpha}(k)$ is defined, for $k\in \nat$, as
  \begin{align*}
     \mu_{\alpha}(k)=a_1+\dots+a_{\min(v,\alpha)},
  \end{align*}
where we denote the $b$-adic expansion of $k$ by $k=\kappa_1b^{a_1-1}+\dots+\kappa_vb^{a_v-1}$ with $0< \kappa_1,\dots,\kappa_v<b$ and $a_1>\dots>a_v>0$.
\end{lemma}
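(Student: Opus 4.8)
The plan is to treat the two assertions separately. For the coefficient formula, I would substitute the product form \eqref{eq:Sobolev_kernel} into the definition of $\hat{\Kcal}_{\alpha,\bsgamma}((\bsk_u,\bszero),(\bsl_v,\bszero))$ and interchange the finite sum over subsets $u'\subseteq\{1,\dots,s\}$ with the $2s$-fold integral. Since both the summand $\prod_{j\in u'}\Kcal_{\alpha,(1)}(x_j,y_j)$ and the Walsh functions factor coordinatewise, the integral splits into a product of one-dimensional integrals: for $j\notin u'$ the factor is $\bigl(\int_0^1\overline{\wal_{k_j}(x_j)}\rd x_j\bigr)\bigl(\int_0^1\wal_{l_j}(y_j)\rd y_j\bigr)$, which by Item~2 of Proposition~\ref{prop:walsh} equals $1$ when $k_j=l_j=0$ and $0$ otherwise, while for $j\in u'$ the factor is exactly $\hat{\Kcal}_{\alpha,(1)}(k_j,l_j)$.

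The crucial extra ingredient is the vanishing $\hat{\Kcal}_{\alpha,(1)}(k,0)=\hat{\Kcal}_{\alpha,(1)}(0,l)=0$ for all $k,l\in\nat_0$, which I would deduce from $\int_0^1\Kcal_{\alpha,(1)}(x,y)\rd y=0$ for every $x$. The separable part contributes zero because $\int_0^1 B_\tau(y)\rd y=0$ for $\tau\ge 1$, and the remaining part vanishes by a short Bernoulli-polynomial computation: using $\int B_{2\alpha}=B_{2\alpha+1}/(2\alpha+1)$, the reflection identity $B_{2\alpha+1}(1-x)=-B_{2\alpha+1}(x)$, and the vanishing of odd-index Bernoulli numbers, one gets $\int_0^1 B_{2\alpha}(|x-y|)\rd y=0$. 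Because $(\bsk_u,\bszero)$ and $(\bsl_v,\bszero)$ are supported exactly on $u$ and $v$, the coordinates with $j\notin u'$ force $u\subseteq u'$ and $v\subseteq u'$, whereas the vanishing fact forces each surviving $j\in u'$ to satisfy $k_j\neq 0$ and $l_j\neq 0$, i.e.\ $u'\subseteq u\cap v$. These inclusions squeeze $u=u'=v$, leaving exactly $\gamma_u\prod_{j\in u}\hat{\Kcal}_{\alpha,(1)}(k_j,l_j)$.

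For the decay bound I would reduce everything to the one-dimensional estimate that a function whose $\alpha$-th derivative has bounded variation has Walsh coefficients bounded by $C_{\alpha,b}b^{-\mu_{\alpha}(k)}$, as developed by Dick~\cite{D08}. Applied to each summand of $\Kcal_{\alpha,(1)}$, the separable terms $B_\tau(x)B_\tau(y)/(\tau!)^2$ have Walsh coefficient $\hat{B}_\tau(k)\hat{B}_\tau(l)/(\tau!)^2$; since $B_\tau$ with $1\le\tau\le\alpha$ is a polynomial whose $\alpha$-th derivative is constant, each factor obeys $|\hat{B}_\tau(k)|\le C b^{-\mu_{\alpha}(k)}$, giving the claimed product. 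The term $(-1)^{\alpha+1}B_{2\alpha}(|x-y|)/(2\alpha)!$ is handled by splitting the unit square along the diagonal $x=y$ and applying the one-dimensional estimate in each variable on each triangle. The main obstacle is precisely this last term: here $x$ and $y$ are coupled and $B_{2\alpha}(|x-y|)$ loses smoothness across $x=y$, so making the two-variable Walsh coefficient separate cleanly into $b^{-\mu_{\alpha}(k)-\mu_{\alpha}(l)}$ uniformly in $k,l$ is where Dick's technical machinery for bounding Walsh coefficients of functions of high smoothness is essential.
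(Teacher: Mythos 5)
First, a point of reference: the paper itself gives no proof of this lemma at all --- it is imported verbatim from \cite[Section~3]{BD09} (``According to [BD09, Section~3], we have the following''), so your attempt can only be compared against that source. Your first half is correct and essentially complete: the coordinatewise factorization of the $2s$-fold integral, the observation that coordinates $j\notin u'$ force $u\subseteq u'$ and $v\subseteq u'$, and the vanishing $\hat{\Kcal}_{\alpha,(1)}(k,0)=\hat{\Kcal}_{\alpha,(1)}(0,l)=0$, which you correctly reduce to $\int_0^1\Kcal_{\alpha,(1)}(x,y)\rd y=0$: the separable part dies since $\int_0^1 B_\tau(y)\rd y=0$, and indeed $\int_0^1 B_{2\alpha}(|x-y|)\rd y=(B_{2\alpha+1}(x)+B_{2\alpha+1}(1-x))/(2\alpha+1)=0$ by the reflection identity. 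The squeeze $u\subseteq u'\subseteq u\cap v$ giving $u=u'=v$ is exactly right.

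The genuine gap is in the decay bound, precisely at the coupled term $B_{2\alpha}(|x-y|)$. Splitting the square along the diagonal and ``applying the one-dimensional estimate in each variable on each triangle'' cannot, as stated, yield the product $b^{-\mu_{\alpha}(k)-\mu_{\alpha}(l)}$: bounding the inner coefficient $F_k(y)=\int_0^1 B_{2\alpha}(|x-y|)\overline{\wal_k(x)}\rd x$ uniformly in $y$ gives only $Cb^{-\mu_{\alpha}(k)}$. To gain the second factor one must show that $F_k$, as a function of $y$, has smoothness $\alpha$ with derivative norms themselves of size $O(b^{-\mu_{\alpha}(k)})$, and this is where the diagonal kink bites: the $y$-derivatives of order near $\alpha$ of $B_{2\alpha}(|x-y|)$ are only piecewise smooth in $x$, with jumps at $x=y$, so their $x$-Walsh coefficients no longer decay at the full rate --- exactly the difficulty your sketch flags but leaves as a black box. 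The clean resolution, and the reason \cite{BD09} is called a ``reproducing kernel Hilbert space approach'', avoids pointwise analysis of $B_{2\alpha}(|x-y|)$ entirely: by Dick's embedding result \cite{D08}, the functional $f\mapsto\hat{f}(k)$ is bounded on the univariate space with reproducing kernel $\Kcal_{\alpha,(1)}$ with operator norm at most $C_{\alpha,b}\,b^{-\mu_{\alpha}(k)}$; writing $g_k$ for its Riesz representer, the reproducing property gives $\hat{\Kcal}_{\alpha,(1)}(k,l)=\overline{\langle g_k,g_l\rangle}$, whence Cauchy--Schwarz yields $|\hat{\Kcal}_{\alpha,(1)}(k,l)|\le\|g_k\|\,\|g_l\|\le C_{\alpha,b}^2\,b^{-\mu_{\alpha}(k)-\mu_{\alpha}(l)}$. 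Without this (or an equivalently careful two-variable analysis), your proposal proves the structural formula but not the bound.
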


Here $\Kcal_{\alpha,\bsgamma}$ is continuous in the usual topology of $[0,1]^s$. Therefore,  we have $\int_{[0,1]^s}\sqrt{\Kcal_{\alpha,\bsgamma}(\bsx,\bsx)}\rd \bsx<\infty$, and $\sum_{\bsk \in \nat_0^s} |\hat{\Kcal}_{\alpha,\bsgamma}(\bsk, \bsk)|$ is also finite since
  \begin{align*}
     \sum_{\bsk \in \nat_0^s} |\hat{\Kcal}_{\alpha,\bsgamma}(\bsk, \bsk)| \le & \sum_{u\subseteq \{1,\dots,s\}}\gamma_u C^{|u|}_{\alpha,b}\sum_{\bsk_u\in \nat^{|u|}}\prod_{j\in u}b^{-2\mu_{\alpha}(k_j)} \\
     \le & \sum_{u\subseteq \{1,\dots,s\}}\gamma_u C^{|u|}_{\alpha,b}\left( \sum_{k=1}^{\infty}b^{-2\mu_{1}(k)}\right)^{|u|} \\
     = & \sum_{u\subseteq \{1,\dots,s\}}\gamma_u C^{|u|}_{\alpha,b}\left( \sum_{l=1}^{\infty}\sum_{k=b^{l-1}}^{b^l-1}b^{-2l}\right)^{|u|} \\
     = & \sum_{u\subseteq \{1,\dots,s\}}\gamma_u C^{|u|}_{\alpha,b}b^{-|u|} < \infty .
  \end{align*}
Thus, from Proposition \ref{prop:the Walsh expansion of kernel}, the Walsh expansion of the folded digitally shifted reproducing kernel of $\Kcal_{\alpha,\bsgamma}$, denoted by $\Kcal_{\alpha,\bsgamma,\sh,\phi_b}$, converges to $\Kcal_{\alpha,\bsgamma,\sh,\phi_b}$ pointwise absolutely.

Combining this result with our Theorem \ref{thm:mse_formula}, we have the following theorem that shows a bound on the mean square worst-case error for digital nets over $\ZZ_b$ which are randomly digitally shifted and then folded using the $b$-TT.

\begin{theorem}\label{thm:mse_sobolev_bound}
Let $\Kcal_{\alpha,\bsgamma}$ be the reproducing kernel described as in (\ref{eq:Sobolev_kernel}). Let $P_{b^m,s}$ be a digital net over $\ZZ_b$ and $\phi_b(P_{b^m,s}\oplus \bssigma)$ be a point set obtained by digitally shifting $P_{b^m,s}$ by a randomly chosen $\bssigma$ and then folding using the $b$-TT. Then the mean square worst-case error of $\phi_b(P_{N,s}\oplus \bssigma)$ is bounded by
    \begin{align*}
     \hat{e}^2(P_{b^m,s},\Kcal_{\alpha,\bsgamma}) \le \sum_{\emptyset \ne u\subseteq \{1,\dots,s\}}\gamma_u C_{\alpha,b}^{|u|}\sum_{\substack{\bsk_u\in \Ecal_{b}^{|u|} \\ (\bsk_u,\bszero)\in D^{\perp}(P_{b^m,s})}}b^{-2\mu_{\alpha}(\lfloor \bsk_u/b\rfloor)} =: B_{\alpha,\bsgamma}(P_{b^m,s}),
  \end{align*}
where $D^{\perp}(P_{b^m,s})$ is the dual net of $P_{b^m,s}$ and $\mu_{\alpha}(\lfloor \bsk_u/b\rfloor)=\sum_{j\in u}\mu_{\alpha}(\lfloor k_j/b\rfloor)$.
\end{theorem}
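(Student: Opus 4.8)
The plan is to start from the exact expression for the mean square worst-case error provided by Theorem~\ref{thm:mse_formula}. The computation immediately preceding the present theorem shows that $\sum_{\bsk\in\nat_0^s}|\hat{\Kcal}_{\alpha,\bsgamma}(\bsk,\bsk)|<\infty$, so by Proposition~\ref{prop:the Walsh expansion of kernel} the Walsh expansion of $\Kcal_{\alpha,\bsgamma,\sh,\phi_b}$ converges to it pointwise absolutely, which is exactly the hypothesis needed to invoke Theorem~\ref{thm:mse_formula}. I would therefore begin by writing
$$
\hat{e}^2(P_{b^m,s},\Kcal_{\alpha,\bsgamma}) = \sum_{\substack{\bsk\in (\Ecal_b\cup\{0\})^s\setminus \{\bszero\}\\ \bsk\in D^{\perp}(P_{b^m,s})}}\hat{\Kcal}_{\alpha,\bsgamma}(\lfloor \bsk/b\rfloor,\lfloor \bsk/b\rfloor).
$$

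The next step is to reorganize this sum according to the support $u:=\{j:k_j\neq 0\}$ of $\bsk$. The crucial observation is that if $k_j\in\Ecal_b$, then $\delta_b(k_j)\equiv 0\pmod b$ together with $k_j\ge 1$ forces $k_j\ge b$, and hence $\lfloor k_j/b\rfloor\ge 1$; conversely $k_j=0$ gives $\lfloor k_j/b\rfloor=0$. Consequently the support of $\lfloor\bsk/b\rfloor$ coincides with the support $u$ of $\bsk$, so that $\lfloor\bsk/b\rfloor=(\lfloor\bsk_u/b\rfloor,\bszero)$ with $\bsk_u=(k_j)_{j\in u}\in\Ecal_b^{|u|}$. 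Applying the Walsh coefficient formula of the lemma preceding this theorem then gives $\hat{\Kcal}_{\alpha,\bsgamma}(\lfloor\bsk/b\rfloor,\lfloor\bsk/b\rfloor)=\gamma_u\prod_{j\in u}\hat{\Kcal}_{\alpha,(1)}(\lfloor k_j/b\rfloor,\lfloor k_j/b\rfloor)$ for $\bsk$ with support $u$, so that summing over nonempty $u$ and over $\bsk_u\in\Ecal_b^{|u|}$ with $(\bsk_u,\bszero)\in D^{\perp}(P_{b^m,s})$ recovers the full sum.

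Finally, I would apply the triangle inequality and then the one-dimensional bound $|\hat{\Kcal}_{\alpha,(1)}(k,l)|\le C_{\alpha,b}b^{-\mu_\alpha(k)-\mu_\alpha(l)}$ with $k=l=\lfloor k_j/b\rfloor$, which yields $\prod_{j\in u}|\hat{\Kcal}_{\alpha,(1)}(\lfloor k_j/b\rfloor,\lfloor k_j/b\rfloor)|\le C_{\alpha,b}^{|u|}b^{-2\mu_\alpha(\lfloor\bsk_u/b\rfloor)}$, using the convention $\mu_\alpha(\lfloor\bsk_u/b\rfloor)=\sum_{j\in u}\mu_\alpha(\lfloor k_j/b\rfloor)$. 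Assembling these estimates produces exactly the bound $B_{\alpha,\bsgamma}(P_{b^m,s})$. I expect the only nonroutine point to be the support-preservation observation: one must notice that membership in $\Ecal_b$ excludes $0<k_j<b$, so that $\lfloor k_j/b\rfloor$ never collapses to zero. This is what allows the support-indexed Walsh coefficient formula to be applied directly and keeps the double sum over $(u,\bsk_u)$ clean; everything else is a routine triangle-inequality estimate.
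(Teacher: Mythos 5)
Your proposal is correct and takes essentially the same route as the paper, which obtains this bound precisely by combining the summability computation preceding the theorem (justifying, via Proposition~\ref{prop:the Walsh expansion of kernel}, the hypothesis of Theorem~\ref{thm:mse_formula}) with the exact error formula of Theorem~\ref{thm:mse_formula}, the support-indexed Walsh-coefficient formula, and the bound $|\hat{\Kcal}_{\alpha,(1)}(k,l)|\le C_{\alpha,b}b^{-\mu_{\alpha}(k)-\mu_{\alpha}(l)}$. Your support-preservation observation (that $k_j\in\Ecal_b$ forces $k_j\ge b$, hence $\lfloor k_j/b\rfloor\ge 1$, so the support of $\lfloor\bsk/b\rfloor$ equals that of $\bsk$) is a detail the paper leaves implicit, and it is correct.
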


\subsection{Existence result}
In this subsection, we focus on higher order polynomial lattice point sets over $\ZZ_b$ as $P_{b^m,s}$. We denote the bound shown in Theorem \ref{thm:mse_sobolev_bound} as $B_{\alpha,\bsgamma}(\bsq,p)$ to emphasize the role of $\bsq$ and $p$ in higher order polynomial lattice point sets, see Definition \ref{def:hopoly}. Here we prove the existence of good higher order polynomial lattice rules which achieve almost the optimal convergence rate in $\Hcal_{\alpha,\bsgamma}$ when $n\ge \alpha m/2$. Without loss of generality, we can restrict ourselves to considering a set of polynomials $\bsq=(q_1,\ldots,q_s)\in G_{b,n}^s$ for $1\le j\le s$, where
  \begin{align*}
    G_{b,n}=\{q\in \ZZ_b[x] \colon \deg(q)<n \} .
  \end{align*}
This implies that we have $b^{ns}$ candidates for $\bsq$ in total. The existence result shows that there exists at least one good generating vector $\bsq$ among them. The following theorem is exactly what we want.

\begin{theorem}\label{theorem:existence}
Let $p\in \FF_b[x]$ be an irreducible polynomial with $\deg(p)=n$. Then, for $\alpha \ge 2$, there exists at least one vector of polynomials $\bsq\in G_{b,n}^s$, such that for the higher order polynomial lattice point set with generating vector $\bsq$ and modulus $p$ we have
  \begin{align*}
    B_{\alpha,\bsgamma}(\bsq,p) \le \frac{1}{b^{\min(m/\lambda, 4n)}}\left[\sum_{\emptyset \ne u\subseteq \{1,\ldots,s\}}\gamma_u^{\lambda}C_{\alpha,b}^{\lambda|u|}\left( A_{\alpha,b,\lambda,1}^{|u|}+A_{\alpha,b,\lambda,2}^{|u|}\right)\right]^{1/\lambda} ,
  \end{align*}
for any $1/(2\alpha)<\lambda \le 1$, where
  \begin{align*}
    A_{\alpha,b,\lambda,1}=\frac{b}{b-1}\left[ \sum_{v=1}^{\alpha-1}\prod_{i=1}^{v}\left( \frac{b-1}{b^{2\lambda i}-1}\right)+\frac{b^{2\lambda \alpha}-1}{b^{2\lambda \alpha}-b}\prod_{i=1}^{\alpha}\left( \frac{b-1}{b^{2\lambda i}-1}\right)\right] ,
  \end{align*}
and
  \begin{align*}
    A_{\alpha,b,\lambda,2}=\frac{1}{b-1}\sum_{v=2}^{\alpha-1}\prod_{i=1}^{v}\left( \frac{b^{2\lambda}(b-1)}{b^{2\lambda i}-1}\right)+\frac{b^{2\lambda}}{b^{2\lambda\alpha}-b}\prod_{i=1}^{\alpha-1}\left( \frac{b^{2\lambda}(b-1)}{b^{2\lambda i}-1}\right) .
  \end{align*}
\end{theorem}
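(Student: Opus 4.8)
The plan is to run the standard averaging argument over the $b^{ns}$ admissible generating vectors $\bsq\in G_{b,n}^s$, combined with Jensen's inequality to bring in the exponent $\lambda$. Fix $\lambda\in(1/(2\alpha),1]$. Every summand of $B_{\alpha,\bsgamma}(\bsq,p)$ is non-negative, so applying $(\sum_i x_i)^\lambda\le\sum_i x_i^\lambda$ to both the sum over $u$ and the inner sum over $\bsk_u$ gives
\begin{align*}
B_{\alpha,\bsgamma}(\bsq,p)^\lambda\le\sum_{\emptyset\ne u\subseteq\{1,\dots,s\}}\gamma_u^\lambda C_{\alpha,b}^{\lambda|u|}\sum_{\substack{\bsk_u\in\Ecal_b^{|u|}\\(\bsk_u,\bszero)\in D^{\perp}(\bsq,p)}}b^{-2\lambda\mu_{\alpha}(\lfloor\bsk_u/b\rfloor)}.
\end{align*}
It then suffices to bound the average of the right-hand side over $\bsq\in G_{b,n}^s$, since the minimum over $\bsq$ is at most the average; taking $\lambda$-th roots at the very end converts $b^{-\min(m,4n\lambda)/\lambda}$ into $b^{-\min(m/\lambda,4n)}$, which is exactly the prefactor in the statement.

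First I would average over $\bsq$. By Lemma~\ref{lem:hopoly_dual_net}, the membership $(\bsk_u,\bszero)\in D^{\perp}(\bsq,p)$ is the linear condition $\sum_{j\in u}\tr_n(k_j)q_j\equiv a\pmod p$ with $\deg a<n-m$, involving only the coordinates $q_j$ with $j\in u$. Because $p$ is irreducible, $\ZZ_b[x]/(p)$ is a field: if $\tr_n(k_j)\ne0$ for some $j\in u$ (equivalently $b^n\nmid k_j$) the corresponding coordinate map is a bijection and the proportion of $\bsq$ meeting the condition is exactly $b^{-m}$, whereas if $\tr_n(k_j)=0$ for all $j\in u$ (equivalently $b^n\mid k_j$ for all $j\in u$) the condition holds for every $\bsq$. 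Splitting the sum on this dichotomy and using $\mu_{\alpha}(\lfloor\bsk_u/b\rfloor)=\sum_{j\in u}\mu_{\alpha}(\lfloor k_j/b\rfloor)$ to factor over $j\in u$, the average is at most $\sum_u\gamma_u^\lambda C_{\alpha,b}^{\lambda|u|}\bigl(T_1^{|u|}+b^{-m}T^{|u|}\bigr)$, where $T:=\sum_{k\in\Ecal_b}b^{-2\lambda\mu_{\alpha}(\lfloor k/b\rfloor)}$ and $T_1:=\sum_{k\in\Ecal_b,\,b^n\mid k}b^{-2\lambda\mu_{\alpha}(\lfloor k/b\rfloor)}$.

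The heart of the argument is the evaluation of these two single-variable sums. The map $k\mapsto\ell:=\lfloor k/b\rfloor$ is a bijection from $\Ecal_b$ onto $\nat$ (the lowest digit of $k$ is forced by $\delta_b(k)\equiv0$), so $T=\sum_{\ell\ge1}b^{-2\lambda\mu_{\alpha}(\ell)}$; grouping $\ell$ by its number of nonzero digits $v$, evaluating the geometric series in the digit gaps, and summing the tail $v\ge\alpha$ via the binomial identity $\sum_{v\ge\alpha}(b-1)^v\binom{a-1}{v-\alpha}=(b-1)^\alpha b^{a-1}$, I expect to obtain exactly $T=\tfrac{b-1}{b}A_{\alpha,b,\lambda,1}\le A_{\alpha,b,\lambda,1}$. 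For $T_1$ the crucial structural point is that $b^n\mid k$ together with $k\in\Ecal_b$ forces $\ell$ to satisfy $b^{n-1}\mid\ell$ \emph{and} $\delta_b(\ell)\equiv0\pmod b$; the digit-sum condition excludes a single nonzero digit, so $\ell$ has at least two nonzero digits and hence $\min(v,\alpha)\ge2$. Writing $\mu_{\alpha}(\ell)=\mu_{\alpha}(\ell')+(n-1)\min(v,\alpha)$, where $\ell'$ is $\ell$ with its digits shifted down by $n-1$ positions, this lower bound $\min(v,\alpha)\ge2$ is precisely what upgrades the naive factor $b^{-2n\lambda}$ to $b^{-4n\lambda}$. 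Bounding the number $M_v$ of admissible digit tuples of weight $\equiv0\pmod b$ by $M_v\le(b-1)^{v-1}$ (with equality at $v=2$) and comparing term by term in $v$, using the elementary exponent inequality $4n-2(n-1)\min(v,\alpha)\le2\min(v,\alpha)$, which is equivalent to $\min(v,\alpha)\ge2$, each term is dominated by the corresponding term of $A_{\alpha,b,\lambda,2}$, giving $T_1\le b^{-4n\lambda}A_{\alpha,b,\lambda,2}$.

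Combining the pieces, $T_1^{|u|}\le b^{-4n\lambda|u|}A_{\alpha,b,\lambda,2}^{|u|}\le b^{-\min(m,4n\lambda)}A_{\alpha,b,\lambda,2}^{|u|}$ and $b^{-m}T^{|u|}\le b^{-\min(m,4n\lambda)}A_{\alpha,b,\lambda,1}^{|u|}$ (using $|u|\ge1$), so the average of $B_{\alpha,\bsgamma}(\bsq,p)^\lambda$ over $\bsq$ is at most $b^{-\min(m,4n\lambda)}\sum_u\gamma_u^\lambda C_{\alpha,b}^{\lambda|u|}\bigl(A_{\alpha,b,\lambda,1}^{|u|}+A_{\alpha,b,\lambda,2}^{|u|}\bigr)$. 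Hence, for the fixed $\lambda$, at least one $\bsq\in G_{b,n}^s$ attains this bound on $B_{\alpha,\bsgamma}(\bsq,p)^\lambda$, and taking $\lambda$-th roots yields the theorem. The main obstacle is the sharp evaluation of $T_1$: obtaining the exponent $4n\lambda$ rather than $2n\lambda$ rests entirely on recognising that the constraint $\delta_b(\ell)\equiv0$ forces at least two nonzero digits, and on matching the digit-count factor $M_v$ and the position sums term-by-term against the rather opaque closed form of $A_{\alpha,b,\lambda,2}$, including the $v\ge\alpha$ tail where the same binomial identity reappears.
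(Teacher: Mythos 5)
Your argument follows the paper's own proof in all essentials: the averaging over $\bsq\in G_{b,n}^s$ combined with Jensen's inequality applied twice, the solution count showing that the dual-net membership $(\bsk_u,\bszero)\in D^{\perp}(\bsq,p)$ holds with proportion $1$ when $b^n\mid k_j$ for all $j\in u$ and with proportion exactly $b^{-m}$ otherwise (this is where irreducibility of $p$ enters), the split into the two single-variable sums $T$ and $T_1$, and the digit-expansion estimates for these, which reproduce the paper's Lemma \ref{lem:sum-of-digit2}; your exponent inequality $4n-2(n-1)\min(v,\alpha)\le 2\min(v,\alpha)$ is precisely the step the paper uses to upgrade $b^{-2\lambda v(n-1)}$ to $b^{-4\lambda n+2\lambda v}$, and your identification of $\min(v,\alpha)\ge 2$ (forced by $\delta_b\equiv 0 \pmod b$) as the source of the exponent $4\lambda n$ is exactly the mechanism in the paper. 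One local improvement is worth noting: your observation that $k\mapsto\lfloor k/b\rfloor$ is a bijection from $\Ecal_b$ onto $\nat$ gives the exact value $T=\frac{b-1}{b}A_{\alpha,b,\lambda,1}$, which is cleaner and slightly sharper than the paper's route via the bound $N_b(v)\le(b-1)^{v-1}$ of Lemma \ref{lem:sum-of-digit1} together with the case distinction $a_v=1$ versus $a_v>1$; I checked this evaluation and it is correct and consistent with the paper's upper bound.

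There is, however, one genuine (though easily repaired) gap: a quantifier inversion. You fix $\lambda$ at the outset and conclude that \emph{for that fixed $\lambda$} some $\bsq$ — which a priori depends on $\lambda$ — satisfies the bound. This proves ``for all $\lambda$ there exists $\bsq$'', whereas the theorem asserts ``there exists $\bsq$ such that for all $\lambda\in(1/(2\alpha),1]$ the bound holds'', and this uniformity in $\lambda$ is what the applications actually use (in Remark \ref{remark:existence} and Corollary \ref{cor:tractability} one optimizes over $\lambda$ for a \emph{single} point set). The standard fix: choose $\bsq^*$ minimizing $B_{\alpha,\bsgamma}(\cdot,p)$ itself over $G_{b,n}^s$; since $x\mapsto x^\lambda$ is increasing on $[0,\infty)$, for every admissible $\lambda$ simultaneously one has $B_{\alpha,\bsgamma}^{\lambda}(\bsq^*,p)=\min_{\tilde{\bsq}}B_{\alpha,\bsgamma}^{\lambda}(\tilde{\bsq},p)\le\frac{1}{b^{ns}}\sum_{\tilde{\bsq}\in G_{b,n}^s}B_{\alpha,\bsgamma}^{\lambda}(\tilde{\bsq},p)$, after which your computation goes through unchanged for this one $\bsq^*$. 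This is also how the averaging step in the paper's proof must be read.
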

\noindent
In order to prove the above theorem, we need the following lemmas.

\begin{lemma}\label{lem:sum-of-digit1}
Let $b$ be a prime. For $v\in \nat$, define
  \begin{align*}
    N_b(v) := | \{\kappa_1,\dots,\kappa_v\in \ZZ_b\setminus \{0\} \colon \kappa_1+\dots+\kappa_v\equiv 0 \pmod b\}| .
  \end{align*}
Then we have
  \begin{align*}
    N_b(v) \le \left\{ \begin{array}{ll}
     0 & \text{if}\quad v = 1,  \\
     (b-1)^{v-1} & \text{if}\quad v > 1.  \\
    \end{array} \right.
  \end{align*}
\end{lemma}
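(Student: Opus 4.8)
The plan is to establish the bound by the elementary ``one forced coordinate'' counting principle, handling the two cases separately. Since we only need an upper bound, no delicate number theory is required; in fact the argument works for every $b\ge 2$ and does not use primality at all.

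For $v=1$ the claim is immediate: the condition demands $\kappa_1\equiv 0\pmod b$ while at the same time $\kappa_1\in \ZZ_b\setminus\{0\}$, and no such $\kappa_1$ exists, so $N_b(1)=0$. For $v>1$ I would argue as follows. Choose the first $v-1$ coordinates $\kappa_1,\dots,\kappa_{v-1}$ freely among the $b-1$ nonzero elements of $\ZZ_b$, which gives $(b-1)^{v-1}$ possibilities. Once these are fixed, the congruence $\kappa_1+\dots+\kappa_v\equiv 0\pmod b$ determines the residue of the last coordinate uniquely, namely $\kappa_v\equiv -(\kappa_1+\dots+\kappa_{v-1})\pmod b$. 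As $\kappa_v$ is required to lie in $\ZZ_b\setminus\{0\}$, there is \emph{at most} one admissible value of $\kappa_v$ for each choice of the initial $v-1$ coordinates (and none precisely when the forced residue turns out to be $0$). Summing over the $(b-1)^{v-1}$ choices of $(\kappa_1,\dots,\kappa_{v-1})$ yields $N_b(v)\le (b-1)^{v-1}$, as claimed.

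I do not anticipate any real obstacle here; the single point worth flagging is \emph{why} the statement is an inequality rather than an equality. This is exactly because the forced value of $\kappa_v$ is sometimes $0$, so not every choice of the first $v-1$ coordinates extends to a valid tuple. If an exact count were ever wanted, one could instead evaluate $N_b(v)=\tfrac1b\sum_{t=0}^{b-1}\bigl(\sum_{\kappa=1}^{b-1}\omega_b^{t\kappa}\bigr)^v=\tfrac1b\bigl((b-1)^v+(-1)^v(b-1)\bigr)$ using the orthogonality relation $\sum_{\kappa=0}^{b-1}\omega_b^{t\kappa}=0$ for $t\not\equiv 0\pmod b$; this closed form is readily seen to be at most $(b-1)^{v-1}$ (with equality only at $v=2$). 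For the present lemma, however, the simple counting bound is all that is needed, and it keeps the proof completely self-contained.
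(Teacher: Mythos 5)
Your proof is correct and follows essentially the same route as the paper: both arguments fix the first $v-1$ nonzero digits and observe that the congruence forces the last digit, giving at most one admissible $\kappa_v$ per choice (the paper packages this as the exact recursion $N_b(v)=(b-1)^{v-1}-N_b(v-1)$ before dropping the subtracted term, and, like you, it never actually uses primality of $b$). Your closed-form character-sum remark is a nice extra but not needed for the lemma.
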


\begin{proof}
For $v=1$, $\kappa_1\in \{1,2,\dots,b-1\}$ cannot be a multiple of $b$. Hence we have $N_b(1)=0$.

We now suppose $v>1$. Since $\kappa_v\in \{1,2,\dots,b-1\}$, $\kappa_1+\dots+\kappa_{v-1}\not\equiv 0\pmod b$. To put it the other way around, if $\kappa_1,\dots,\kappa_{v-1}$ are given such that $\kappa_1+\dots+\kappa_{v-1}\not\equiv 0\pmod b$, then we have exactly one choice $\kappa_v\in \{1,2,\dots,b-1\}$ which satisfies $\kappa_1+\dots+\kappa_v\equiv 0 \pmod b$. Thus we have
  \begin{align*}
    N_b(v) = & | \{\kappa_1,\dots,\kappa_{v-1}\in \ZZ_b\setminus \{0\} \colon \kappa_1+\dots+\kappa_{v-1}\not\equiv 0 \pmod b\}| \\
    = & | \{\kappa_1,\dots,\kappa_{v-1}\in \ZZ_b\setminus \{0\}\}| \\
    & - | \{\kappa_1,\dots,\kappa_{v-1}\in \ZZ_b\setminus \{0\} \colon \kappa_1+\dots+\kappa_{v-1}\equiv 0 \pmod b\}| \\
    = & (b-1)^{v-1}-N_b(v-1) \le (b-1)^{v-1}.
  \end{align*}
\end{proof}

\begin{lemma}\label{lem:sum-of-digit2}
Let $b$ be a prime, $\alpha\ge 2$ be an integer and $\lambda >1/(2\alpha)$ be a real number. Let $A_{\alpha,b,\lambda,1}$ and $A_{\alpha,b,\lambda,2}$ be given as in Theorem \ref{theorem:existence}.
\begin{enumerate}
\item We have 
  \begin{align*}
    \sum_{k\in \Ecal_b}b^{-2\lambda\mu_{\alpha}(\lfloor k/b\rfloor)}\le A_{\alpha,b,\lambda,1}.
  \end{align*}
\item For $n\in \nat$, we have 
  \begin{align*}
    \sum_{\substack{k\in \Ecal_b\\ b^n\mid k}}b^{-2\lambda\mu_{\alpha}(\lfloor k/b\rfloor)}\le \frac{A_{\alpha,b,\lambda,2}}{b^{4\lambda n}} .
  \end{align*}
\end{enumerate}
\end{lemma}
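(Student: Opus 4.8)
The plan is to turn each sum into a product of geometric series by parametrizing each integer through its nonzero digits and their positions, and then to match the resulting closed forms against $A_{\alpha,b,\lambda,1}$ and $A_{\alpha,b,\lambda,2}$. A recurring device will be the gap substitution: for positions $a_1>\cdots>a_v\ge 1$ put $h_j:=a_j-a_{j+1}\ge 1$ (with $h_v:=a_v$), so that $a_j=\sum_{i\ge j}h_i$ and $a_1+\cdots+a_v=\sum_{i=1}^v i\,h_i$; after summing each independent geometric series in $h_i$ a position-sum factorizes as $\prod_i (b^{2\lambda i}-1)^{-1}$.

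For the first bound I would begin with a bijection. Writing $k=bm+\kappa_0$ with $\kappa_0\in\{0,\dots,b-1\}$ and $m=\lfloor k/b\rfloor$, the relation $\delta_b(k)=\kappa_0+\delta_b(m)$ shows that for each $m$ there is exactly one $\kappa_0$ with $\delta_b(k)\equiv 0\pmod b$, and $k\in\nat$ forces $m\ge 1$; hence $\sum_{k\in\Ecal_b}b^{-2\lambda\mu_{\alpha}(\lfloor k/b\rfloor)}=\sum_{m=1}^{\infty}b^{-2\lambda\mu_{\alpha}(m)}$. I would then group $m$ by its number $v$ of nonzero digits. For $v<\alpha$ all positions enter $\mu_\alpha$, and the gap substitution gives a contribution $\prod_{i=1}^{v}\frac{b-1}{b^{2\lambda i}-1}$. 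For the tail $v\ge\alpha$ only the top $\alpha$ positions $a_1>\cdots>a_\alpha$ enter $\mu_\alpha$, while each of the $a_\alpha-1$ lower positions is free (absent, or one of $b-1$ values), contributing a factor $b^{a_\alpha-1}$; summing this out produces a series $\sum_{g_\alpha\ge1}b^{(1-2\lambda\alpha)g_\alpha}$ that converges precisely because $\lambda>1/(2\alpha)$, yielding $\frac{b}{b^{2\lambda\alpha}-b}\prod_{i=1}^{\alpha-1}\frac{b-1}{b^{2\lambda i}-1}$. Collecting the two pieces gives the exact value, and passing to the stated constant uses only $\tfrac{b}{b-1}>1$ and $b>b-1$ as slack.

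For the second bound, $b^{n}\mid k$ forces the lowest $n$ digits of $k$ to vanish, so $k=b^{n}k'$ with $k'\ge 1$, $\delta_b(k)=\delta_b(k')$ (hence $k'\in\Ecal_b$), and $\lfloor k/b\rfloor=b^{n-1}k'$. If $k'$ has nonzero digits at positions $a_1>\cdots>a_v\ge 1$, multiplication by $b^{n-1}$ shifts these to $a_i+n-1$, so $\mu_{\alpha}(\lfloor k/b\rfloor)=\sum_{i\le\min(v,\alpha)}(a_i+n-1)$. Since $k'\in\Ecal_b$ constrains the digit values at these positions to sum to $0\bmod b$, their number is exactly $N_b(v)$ from Lemma~\ref{lem:sum-of-digit1}; in particular the $v=1$ term vanishes. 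Using the gap substitution (anchored at $a_v\ge 1$), the bound $N_b(v)\le (b-1)^{v-1}$, and the split into $2\le v\le\alpha-1$ and $v\ge\alpha$, every contribution carries a factor $b^{-2\lambda\min(v,\alpha)(n-1)}$. Because $\alpha\ge 2$ and $v\ge 2$ give $\min(v,\alpha)\ge 2$, I would estimate $b^{-2\lambda\min(v,\alpha)(n-1)}\le b^{2\lambda\min(v,\alpha)}\,b^{-4\lambda n}$, which extracts the target factor $b^{-4\lambda n}$; the remaining factors then reassemble exactly into the two summands of $A_{\alpha,b,\lambda,2}$ (with the tail again requiring $\lambda>1/(2\alpha)$ for the geometric series in $v$, which is where $\frac{b^{2\lambda\alpha}-1}{b^{2\lambda\alpha}-b}$ appears before being absorbed).

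The main obstacle in both parts is the tail $v\ge\alpha$, where $\mu_\alpha$ ignores all but the top $\alpha$ positions: the surplus positions and digits must be summed out separately, the geometric series in $v$ converges only under the hypothesis $\lambda>1/(2\alpha)$, and I must keep careful track of which steps are equalities and which are the inequalities coming from $N_b(v)\le(b-1)^{v-1}$ and from extracting $b^{-4\lambda n}$, so that the final bookkeeping lands precisely on the constants $A_{\alpha,b,\lambda,1}$ and $A_{\alpha,b,\lambda,2}$ of Theorem~\ref{theorem:existence}.
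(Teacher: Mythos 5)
Your proposal is correct; your Part 2 follows the paper's argument essentially verbatim, while your Part 1 takes a genuinely different and in fact sharper route. The paper proves the first bound by expanding $k\in\Ecal_b$ itself in its nonzero digits, invoking $N_b(v)\le(b-1)^{v-1}$ from Lemma \ref{lem:sum-of-digit1}, and then splitting into the cases $a_v=1$ and $a_v>1$ to track how the floor $\lfloor k/b\rfloor$ shifts the positions; consolidating those two cases is precisely what produces the prefactor $\frac{b}{b-1}$ in $A_{\alpha,b,\lambda,1}$. Your observation that $k\mapsto\lfloor k/b\rfloor$ is a bijection from $\Ecal_b$ onto $\nat$ (the digit-sum condition fixes the least significant digit uniquely, and $m=0$ is excluded) eliminates both the counting lemma and the case split, turning the left-hand side into the exact quantity $\sum_{m\ge 1}b^{-2\lambda\mu_\alpha(m)}$, which evaluates to $\frac{b-1}{b}A_{\alpha,b,\lambda,1}$; your parametrization of the tail by the top $\alpha$ positions plus $b^{a_\alpha-1}$ free lower digits likewise replaces the paper's nested sums over $a_{\alpha+1},\dots,a_v$. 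What the paper's heavier machinery buys is reusability: in Part 2 the weight depends on all shifted positions of $k'=k/b^n$ while the constraint $\delta_b(k')\equiv 0\pmod b$ couples the digit values, so the bijection trick is unavailable and one genuinely needs $N_b(v)$ --- which is why your Part 2 reverts to exactly the paper's route (split at $\min(v,\alpha)\ge 2$, extract $b^{-4\lambda n}$, absorb $\frac{b^{2\lambda\alpha}-1}{b^{2\lambda\alpha}-b}$). Two harmless blemishes in Part 1: the phrase ``$k\in\nat$ forces $m\ge 1$'' should read ``$k\in\Ecal_b$ forces $m\ge 1$'' (for $m=0$ the unique admissible digit is $\kappa_0=0$, giving $k=0\notin\nat$); and your stated tail value $\frac{b}{b^{2\lambda\alpha}-b}\prod_{i=1}^{\alpha-1}\frac{b-1}{b^{2\lambda i}-1}$ carries a spurious factor $\frac{b}{b-1}$ (the exact tail has numerator $b-1$, not $b$), so ``gives the exact value'' is not quite accurate --- but since this overestimates, the sum of your two pieces is still at most $\frac{b}{b-1}$ times the true value and hence still bounded by $A_{\alpha,b,\lambda,1}$, so the stated inequality survives.
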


\begin{proof}
Let us consider the first part of the lemma. We consider the $b$-adic expansion of $k\in \Ecal_b$ of the form $k=\kappa_1b^{a_1-1}+\dots+\kappa_vb^{a_v-1}$ with $0< \kappa_1,\dots,\kappa_v<b$ and $a_1>\dots>a_v>0$, and arrange every element of $\Ecal_b$ according to the value of $v$ in their expansions. Since the choice of $\kappa_1,\dots,\kappa_v$ does not change $\mu_{\alpha}(k)$, we have
  \begin{align*}
    \sum_{k\in \Ecal_b}b^{-2\lambda\mu_{\alpha}(\lfloor k/b\rfloor)}=\sum_{v=1}^{\infty}N_b(v)\sum_{0<a_v <\dots <a_1}b^{-2\lambda \mu_{\alpha}(\lfloor (b^{a_1-1}+\dots +b^{a_{v}-1})/b\rfloor)} ,
  \end{align*}
where $N_b(v)$ is defined as in Lemma \ref{lem:sum-of-digit1}. Using the result of Lemma \ref{lem:sum-of-digit1} and considering the cases $a_v=1$ and $a_v>1$ separately, we have
  \begin{align*}
    \sum_{k\in \Ecal_b}b^{-2\lambda\mu_{\alpha}(\lfloor k/b\rfloor)} \le & \sum_{v=2}^{\infty}(b-1)^{v-1}\sum_{0<a_v<\dots <a_1}b^{-2\lambda \mu_{\alpha}(\lfloor (b^{a_1-1}+\dots +b^{a_v-1})/b\rfloor)}  \\
    = & \sum_{v=2}^{\infty}(b-1)^{v-1}\sum_{1<a_{v-1}<\dots <a_1}b^{-2\lambda \mu_{\alpha}(b^{a_1-2}+\dots +b^{a_{v-1}-2})} \\
    & + \sum_{v=2}^{\infty}(b-1)^{v-1}\sum_{1<a_v<\dots <a_1}b^{-2\lambda \mu_{\alpha}(b^{a_1-2}+\dots +b^{a_v-2})} \\
    = & \sum_{v=1}^{\infty}(b-1)^v\sum_{0<a_v<\dots <a_1}b^{-2\lambda \mu_{\alpha}(b^{a_1-1}+\dots +b^{a_v-1})} \\
    & + \frac{1}{b-1}\sum_{v=2}^{\infty}(b-1)^v\sum_{0<a_v<\dots <a_1}b^{-2\lambda \mu_{\alpha}(b^{a_1-1}+\dots +b^{a_v-1})} \\
    \le & \frac{b}{b-1}\sum_{v=1}^{\infty}(b-1)^v\sum_{0<a_v<\dots <a_1}b^{-2\lambda \mu_{\alpha}(b^{a_1-1}+\dots +b^{a_v-1})} .
  \end{align*}
From the definition of $\mu_{\alpha}$ we have
  \begin{align*}
     \mu_{\alpha}(b^{a_1-1}+\cdots +b^{a_{v}-1})= \left\{ \begin{array}{ll}
     a_1+\dots+a_v & \text{if}\quad v < \alpha,  \\
     a_1+\dots+a_{\alpha} & \text{if}\quad v \ge \alpha .  \\
    \end{array} \right.
  \end{align*}
In the following, we split the last infinite sum over $v$ into one finite sum from $v=1$ to $v=\alpha-1$ and the other infinite sum over $v\ge \alpha$ to obtain
  \begin{align}\label{eq:sum-of-digit_pf1}
    \sum_{k\in \Ecal_b}b^{-2\lambda\mu_{\alpha}(\lfloor k/b\rfloor)} \le & \frac{b}{b-1}\sum_{v=1}^{\alpha-1}(b-1)^v\sum_{0<a_v<\dots <a_1}b^{-2\lambda (a_1+\dots +a_v)} \nonumber \\
    & + \frac{b}{b-1}\sum_{v=\alpha}^{\infty}(b-1)^v\sum_{0<a_v<\dots <a_1}b^{-2\lambda (a_1+\dots +a_{\alpha})} .
  \end{align}
We now follow a way analogous to the proof of \cite[Lemma~3.1]{BDGP11}. For the first term on the right-hand side of (\ref{eq:sum-of-digit_pf1}), we have
  \begin{align*}
    \sum_{0<a_v<\cdots <a_1}b^{-2\lambda (a_1+\cdots +a_v)} & = \sum_{a_v=1}^{\infty}b^{-2\lambda a_v}\sum_{a_{v-1}=a_v+1}^{\infty}b^{-2\lambda a_{v-1}}\dots \sum_{a_1=a_2+1}^{\infty}b^{-2\lambda a_1} \\
    & = \frac{1}{b^{2\lambda}-1}\sum_{a_v=1}^{\infty}b^{-2\lambda a_v}\sum_{a_{v-1}=a_v+1}^{\infty}b^{-2\lambda a_{v-1}}\dots \sum_{a_2=a_3+1}^{\infty}b^{-4\lambda a_2} \\
    & \vdots \\
    & = \prod_{i=1}^{v-1}\left( \frac{1}{b^{2\lambda i}-1}\right)\sum_{a_v=1}^{\infty}b^{-2\lambda va_v} = \prod_{i=1}^{v}\left( \frac{1}{b^{2\lambda i}-1}\right) .
  \end{align*}
For the second term on the right-hand side of (\ref{eq:sum-of-digit_pf1}), we have
  \begin{align*}
    & \sum_{v=\alpha}^{\infty}(b-1)^{v}\sum_{0<a_v<\dots <a_1}b^{-2\lambda (a_1+\dots +a_{\alpha})} \\
    = & \sum_{v=\alpha}^{\infty}(b-1)^{v}\sum_{a_v=1}^{\infty}\sum_{a_{v-1}=a_v+1}^{\infty}\dots \sum_{a_{\alpha+1}=a_{\alpha+2}+1}^{\infty}\sum_{a_{\alpha}=a_{\alpha+1}+1}^{\infty}b^{-2\lambda a_{\alpha}}\dots \sum_{a_1=a_2+1}^{\infty}b^{-2\lambda a_1} \\
    = & \prod_{i=1}^{\alpha}\left( \frac{1}{b^{2\lambda i}-1}\right)\sum_{v=\alpha}^{\infty}(b-1)^{v}\sum_{a_v=1}^{\infty}\sum_{a_{v-1}=a_v+1}^{\infty}\dots \sum_{a_{\alpha+1}=a_{\alpha+2}+1}^{\infty}b^{-2\lambda \alpha a_{\alpha+1}} \\
    = & \prod_{i=1}^{\alpha}\left( \frac{b-1}{b^{2\lambda i}-1}\right)\sum_{v=\alpha}^{\infty}(b-1)^{v-\alpha}\left( \frac{1}{b^{2\lambda \alpha}-1}\right)^{v-\alpha} = \frac{b^{2\lambda \alpha}-1}{b^{2\lambda \alpha}-b}\prod_{i=1}^{\alpha}\left( \frac{b-1}{b^{2\lambda i}-1}\right) ,
  \end{align*}
where the last equality requires $\lambda > 1/(2\alpha)$. Substituting these results to (\ref{eq:sum-of-digit_pf1}), the result for the first part follows.

Let us move on to the second part of the lemma. If $b^n\mid k$, $k$ is of the form $lb^{n}$ for $l\in \nat$. Using the identity $\delta_b(lb^{n})=\delta_b(l)$, we have
  \begin{align*}
    \sum_{\substack{k\in \Ecal_b\\ b^n\mid k}}b^{-2\lambda\mu_{\alpha}(\lfloor k/b\rfloor)} = \sum_{l\in \Ecal_b}b^{-2\lambda\mu_{\alpha}(lb^{n-1})} .
  \end{align*}
As in the first part of this lemma, we consider the $b$-adic expansion of $l\in \Ecal_b$ of the form $l=\kappa_1b^{a_1-1}+\dots+\kappa_vb^{a_v-1}$ with $0< \kappa_1,\dots,\kappa_v<b$ and $a_1>\dots>a_v>0$, and partition $\Ecal_b$ according to the value of $v$ in their expansions. Using the result of Lemma \ref{lem:sum-of-digit1} and splitting the infinite sum over $v$ into one finite sum from $v=2$ to $v=\alpha-1$ and the other infinite sum over $v\ge \alpha$, we have 
  \begin{align}\label{eq:sum-of-digit_pf2}
    \sum_{\substack{k\in \Ecal_b\\ b^n\mid k}}b^{-2\lambda\mu_{\alpha}(\lfloor k/b\rfloor)} = & \sum_{v=1}^{\infty}N_b(v)\sum_{0<a_v<\cdots <a_1}b^{-2\lambda\mu_{\alpha}(b^{a_1+n-2}+\cdots +b^{a_v+n-2})} \nonumber \\
    \le & \sum_{v=2}^{\alpha-1}(b-1)^{v-1}\sum_{0<a_v<\cdots <a_1}b^{-2\lambda ((a_1+n-1)+\cdots +(a_v+n-1))} \nonumber \\
    & + \sum_{v=\alpha}^{\infty}(b-1)^{v-1}\sum_{0<a_v<\cdots <a_1}b^{-2\lambda ((a_1+n-1)+\cdots +(a_{\alpha}+n-1))} .
  \end{align}
We have for the first term on the rightmost side of (\ref{eq:sum-of-digit_pf2})
  \begin{align*}
    & \sum_{0<a_v<\cdots <a_1}b^{-2\lambda ((a_1+n-1)+\cdots +(a_v+n-1))} \\
    = & \frac{1}{b^{2\lambda v(n-1)}}\sum_{0<a_v<\cdots <a_1}b^{-2\lambda (a_1+\cdots +a_v)} \\
    = & \frac{1}{b^{2\lambda v(n-1)}}\prod_{i=1}^{v}\left( \frac{1}{b^{2\lambda i}-1}\right) \\
    \le & \frac{1}{b^{4\lambda n}}\prod_{i=1}^{v}\left( \frac{b^{2\lambda}}{b^{2\lambda i}-1}\right) ,
  \end{align*}
where we use, in the second equality, the result that appeared in the proof of the first part of this lemma, and the last inequality stems from the fact $v\ge 2$. As for the second term on the rightmost side of (\ref{eq:sum-of-digit_pf2}), we have
  \begin{align*}
    & \sum_{v=\alpha}^{\infty}(b-1)^{v-1}\sum_{0<a_v<\cdots <a_1}b^{-2\lambda ((a_1+n-1)+\cdots +(a_{\alpha}+n-1))} \\
    = & \sum_{v=\alpha}^{\infty}\frac{(b-1)^{v-1}}{b^{2\lambda \alpha (n-1)}}\sum_{a_v=1}^{\infty}\sum_{a_{v-1}=a_v+1}^{\infty}\dots \sum_{a_{\alpha+1}=a_{\alpha+2}+1}^{\infty}\sum_{a_{\alpha}=a_{\alpha+1}+1}^{\infty}b^{-2\lambda a_{v-1}}\dots \sum_{a_1=a_2+1}^{\infty}b^{-2\lambda a_1} \\
    = & \prod_{i=1}^{\alpha}\left( \frac{1}{b^{2\lambda i}-1}\right)\sum_{v=\alpha}^{\infty}\frac{(b-1)^{v-1}}{b^{2\lambda \alpha (n-1)}}\sum_{a_v=1}^{\infty}\sum_{a_{v-1}=a_v+1}^{\infty}\dots \sum_{a_{\alpha+1}=a_{\alpha+2}+1}^{\infty}b^{-2\lambda \alpha a_{\alpha+1}} \\
    = & \frac{(b-1)^{\alpha-1}}{b^{2\lambda \alpha (n-1)}}\prod_{i=1}^{\alpha}\left( \frac{1}{b^{2\lambda i}-1}\right)\sum_{v=\alpha}^{\infty}(b-1)^{v-\alpha}\left( \frac{1}{b^{2\lambda \alpha}-1}\right)^{v-\alpha} \\
    = & \frac{b^{2\lambda \alpha}-1}{b^{2\lambda \alpha}-b}\cdot\frac{(b-1)^{\alpha-1}}{b^{2\lambda \alpha (n-1)}}\prod_{i=1}^{\alpha}\left( \frac{1}{b^{2\lambda i}-1}\right) \\
    \le & \frac{1}{b^{4\lambda n}}\frac{b^{2\lambda}}{b^{2\lambda\alpha}-b}\prod_{i=1}^{\alpha-1}\left( \frac{b^{2\lambda}(b-1)}{b^{2\lambda i}-1}\right) ,
  \end{align*}
where we have the last inequality since $\alpha\ge 2$. Substituting these results into (\ref{eq:sum-of-digit_pf2}), the result for the second part follows.
\end{proof}
\noindent
Now we are ready to prove Theorem \ref{theorem:existence}. In the following proof, we shall use the following inequality that is sometimes
referred to as Jensen's inequality. For a sequence $(a_n)_{n\in \nat}$ of non-negative real numbers, we have
  \begin{align}\label{eq:jensen}
    \left( \sum_{n}a_n\right)^{\lambda} \le \sum_{n}a_n^{\lambda} ,
  \end{align}
for $0<\lambda \le 1$.

\begin{proof}[Proof of Theorem~\ref{theorem:existence}]
Due to an averaging argument, there exists at least one set of polynomials $\bsq\in G_{b,n}^s$ for which $B_{\alpha,\bsgamma}^{\lambda}(\bsq,p)$ is smaller than or equal to the average of $B_{\alpha,\bsgamma}^{\lambda}(\tilde{\bsq},p)$ over $\tilde{\bsq}\in G_{b,n}^s$ for any $1/(2\alpha)<\lambda\le 1$. That is,
  \begin{align}\label{eq:averaging}
    B_{\alpha,\bsgamma}^{\lambda}(\bsq,p) \le \frac{1}{b^{ns}}\sum_{\tilde{\bsq}\in G_{b,n}^s}B_{\alpha,\bsgamma}^{\lambda}(\tilde{\bsq},p)=:\bar{B}_{\alpha,\bsgamma,\lambda} .
  \end{align}
Applying the inequality (\ref{eq:jensen}), we have
  \begin{align*}
     \bar{B}_{\alpha,\bsgamma,\lambda} \le & \frac{1}{b^{ns}}\sum_{\tilde{\bsq}\in G_{b,n}^s}\sum_{\emptyset \ne u\subseteq \{1,\ldots,s\}}\gamma_u^{\lambda}C_{\alpha,b}^{\lambda|u|}\sum_{\substack{\bsk_u\in \Ecal_b^{|u|}\\ (\bsk_u,\bszero) \in \Dcal^{\perp}(\tilde{\bsq},p)}}b^{-2\lambda\mu_{\alpha}(\lfloor\bsk_u/b\rfloor)} \\
    = & \sum_{\emptyset \ne u\subseteq \{1,\ldots,s\}}\gamma_u^{\lambda}C_{\alpha,b}^{\lambda|u|}\sum_{\bsk_u\in \Ecal_b^{|u|}}b^{-2\lambda\mu_{\alpha}(\lfloor\bsk_u/b\rfloor)}\frac{1}{b^{n|u|}}\sum_{\substack{\tilde{\bsq}_u\in G_{b,n}^{|u|}\\ \rtr_n(\bsk_u)\cdot \tilde{\bsq}_u\equiv a \pmod p\\ \deg(a)<n-m}}1 ,
  \end{align*}
where we denote $\rtr_n(\bsk_u)\cdot \tilde{\bsq}_u=\sum_{j\in u}\rtr_n(k_j)\tilde{q}_j$. The innermost sum equals the number of solutions $\tilde{\bsq}_u\in G_{b,n}^{|u|}$ such that $\rtr_n(\bsk_u)\cdot \tilde{\bsq}_u\equiv a \pmod p$ with $\deg(a)<n-m$. If $\rtr_n(k_j)$ is a multiple of $p$ for all $j\in u$, we always have $\rtr_n(\bsk_u)\cdot \tilde{\bsq}_u\equiv 0 \pmod p$ independently of $\tilde{\bsq}_u$. Otherwise if there exists at least one component $\rtr_n(k_j)$ which is not a multiple of $p$, then there are $b^{n-m}$ possible choices for $a$ such that $\deg(a)<n-m$, for each of which there are $b^{n(|u|-1)}$ solutions $\tilde{\bsq}_u$ to $\rtr_n(\bsk_u)\cdot \tilde{\bsq}_u\equiv a \pmod p$. Thus we have
  \begin{align*}
    \frac{1}{b^{n|u|}}\sum_{\substack{\tilde{\bsq}_u\in G_{b,n}^{|u|}\\ \rtr_n(\bsk_u)\cdot \tilde{\bsq}_u\equiv a \pmod p\\ \deg(a)<n-m}}1 = \begin{cases}
     1 & \text{if}\quad  p|\rtr_n(k_j) \quad \text{for all}\; j\in u, \\
     \frac{1}{b^m} & \text{otherwise} .
    \end{cases}
  \end{align*}
For $k\in \nat$, suppose $k$ is expressed in the form $lb^n+k'$ such that $l\in \nat_0$ and $0\le k'<b^n$. If $k'=0$, we have $\rtr_n(k)=0$ and thus $p\mid \rtr_n(k)$. Otherwise if $k'>0$, $p\nmid \rtr_n(k)$. Using these results, we obtain
  \begin{align*}
    \bar{B}_{\alpha,\bsgamma,\lambda} \le & \sum_{\emptyset \ne u\subseteq \{1,\ldots,s\}}\gamma_u^{\lambda}C_{\alpha,b}^{\lambda|u|}\left[ \frac{1}{b^m}\sum_{\bsk_u\in \Ecal_b^{|u|}}b^{-2\lambda\mu_{\alpha}(\lfloor\bsk_u/b\rfloor)}+\sum_{\substack{\bsk_u\in \Ecal_b^{|u|}\\ p\mid \rtr_n(k_j), \forall j\in u}}b^{-2\lambda\mu_{\alpha}(\lfloor\bsk_u/b\rfloor)}\right] \\
    = & \sum_{\emptyset \ne u\subseteq \{1,\ldots,s\}}\gamma_u^{\lambda}C_{\alpha,b}^{\lambda|u|}\left[ \frac{1}{b^m}\left(\sum_{k\in \Ecal_b}b^{-2\lambda\mu_{\alpha}(\lfloor k/b\rfloor)}\right)^{|u|}+\left(\sum_{\substack{k\in \Ecal_b\\ b^n\mid k}}b^{-2\lambda\mu_{\alpha}(\lfloor k/b\rfloor)}\right)^{|u|}\right] \\
    \le & \sum_{\emptyset \ne u\subseteq \{1,\ldots,s\}}\gamma_u^{\lambda}C_{\alpha,b}^{\lambda|u|}\left[ \frac{A_{\alpha,b,\lambda,1}^{|u|}}{b^m}+\left(\frac{A_{\alpha,b,\lambda,2}}{b^{4\lambda n}}\right)^{|u|}\right] \\
    \le & \frac{1}{b^{\min(m,4\lambda n)}}\sum_{\emptyset \ne u\subseteq \{1,\ldots,s\}}\gamma_u^{\lambda}C_{\alpha,b}^{\lambda|u|}\left( A_{\alpha,b,\lambda,1}^{|u|}+A_{\alpha,b,\lambda,2}^{|u|}\right), 
  \end{align*}
where we use Lemma \ref{lem:sum-of-digit2} in the second inequality. From (\ref{eq:averaging}), this bound on $\bar{B}_{\alpha,\bsgamma,\lambda}$ is also a bound on $B_{\alpha,\bsgamma}^{\lambda}(\bsq,p)$. Hence the result follows.
\end{proof}

\begin{remark}\label{remark:existence}
If $n\ge \alpha m/2$, we always have $\min(m/\lambda, 4n)=m/\lambda$ for any $1/(2\alpha)<\lambda\le 1$, and thus obtain a bound on $\hat{e}^2(P_{b^m,s}(\bsq,p),\Kcal_{\alpha,\bsgamma})$,
  \begin{align*}
    B_{\alpha,\bsgamma}(\bsq,p) \le \frac{1}{b^{m/\lambda}}\left[\sum_{\emptyset \ne u\subseteq \{1,\ldots,s\}}\gamma_u^{\lambda}C_{\alpha,b}^{\lambda|u|}\left( A_{\alpha,b,\lambda,1}^{|u|}+A_{\alpha,b,\lambda,2}^{|u|}\right)\right]^{1/\lambda} .
  \end{align*}
This compares favorably with the bound on the mean square worst-case error of higher order polynomial lattice rules whose quadrature points are randomly digitally shifted but not folded using the $b$-TT, since $n\ge \alpha m$ is required to achieve the same convergence rate, see \cite[Theorem~4.4]{DP07}. As we cannot achieve the convergence rate of the mean square worst-case error of order $b^{-2\alpha m}$ in $\Hcal_{\alpha,\bsgamma}$ \cite{S63}, our result is almost optimal.
\end{remark}
\noindent
The following corollary of Theorem \ref{theorem:existence} gives sufficient conditions on the weights under which the bound on the information complexity $N(\varepsilon,s)$ does not depend on the dimension, or does depend only polynomially on the dimension.

The initial error in $\Hcal_{\alpha,\bsgamma}$ is given by
  \begin{align*}
     \hat{e}^2(P_{0,s},\Kcal_{\alpha,\bsgamma}) = & \int_{[0,1]^s}\int_{[0,1]^{2s}}\Kcal_{\alpha,\bsgamma}(\bsx,\bsy)\rd \bsx \rd \bsy \rd \bssigma \\
     = & \hat{\Kcal}_{\alpha,\bsgamma}(\bszero,\bszero) = \gamma_{\emptyset} .
  \end{align*}
From Remark \ref{remark:existence}, when $n\ge \alpha m/2$, we have 
  \begin{align*}
     \hat{e}^2(P_{b^m,s}(\bsq,p),\Kcal_{\alpha,\bsgamma})\le \frac{1}{b^{m/\lambda}}\left[\sum_{\emptyset \ne u\subseteq \{1,\ldots,s\}}\gamma_u^{\lambda}C_{\alpha,b}^{\lambda|u|}\left( A_{\alpha,b,\lambda,1}^{|u|}+A_{\alpha,b,\lambda,2}^{|u|}\right)\right]^{1/\lambda} ,
  \end{align*}
for any $1/(2\alpha)<\lambda \le 1$. By considering the inequality $\hat{e}(P_{b^m,s}(\bsq,p),\Kcal_{\alpha,\bsgamma})\le \varepsilon \hat{e}(P_{0,s},\Kcal_{\alpha,\bsgamma})$, the information complexity $N(\varepsilon,s)$ is bounded by
  \begin{align*}
     N(\varepsilon,s) \le \inf_{m\in \nat}\left\{ b^m : \exists \lambda\in \left( \frac{1}{2\alpha},1\right], \frac{1}{b^{m/\lambda}}\left[\sum_{\emptyset \ne u\subseteq \{1,\ldots,s\}}\gamma_u^{\lambda}C_{\alpha,b}^{\lambda|u|}\left( A_{\alpha,b,\lambda,1}^{|u|}+A_{\alpha,b,\lambda,2}^{|u|}\right)\right]^{1/\lambda} \le \varepsilon^{2}\gamma_{\emptyset} \right\} .
  \end{align*}
Since the proof of the following corollary is almost the same as that of \cite[Theorem~5.2]{DP07}, we omit it.

\begin{corollary}\label{cor:tractability}
We define
  \begin{align*}
    G_{\lambda,a}:=\limsup_{s\to \infty}\left[\frac{1}{s^a}\sum_{\emptyset \ne u\subseteq \{1,\ldots,s\}}\gamma_u^{\lambda}C_{\alpha,b}^{\lambda|u|}\left( A_{\alpha,b,\lambda,1}^{|u|}+A_{\alpha,b,\lambda,2}^{|u|}\right)\right] ,
  \end{align*}
for $a\ge 0$ and $1/(2\alpha)<\lambda \le 1$.
\begin{enumerate}
\item Assume $G_{\lambda,0}<\infty$ for some $1/(2\alpha)<\lambda\le 1$. Then $N(\varepsilon,s)$ is bounded independently of the dimension.
\item Assume $G_{\lambda,a}<\infty$ for some $1/(2\alpha)<\lambda\le 1$ and $a>0$. Then the bound on $N(\varepsilon,s)$ depends polynomially on the dimension.
\end{enumerate}
\end{corollary}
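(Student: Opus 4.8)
The plan is to make explicit the implicit bound on $N(\varepsilon,s)$ recorded just before the corollary and then read off the dimension dependence directly from the finiteness of $G_{\lambda,a}$. Write $T_{\lambda,s}:=\sum_{\emptyset\ne u\subseteq\{1,\dots,s\}}\gamma_u^{\lambda}C_{\alpha,b}^{\lambda|u|}\left(A_{\alpha,b,\lambda,1}^{|u|}+A_{\alpha,b,\lambda,2}^{|u|}\right)$ for the bracketed quantity, so that the existence bound from Remark~\ref{remark:existence} reads $\hat{e}^2(P_{b^m,s}(\bsq,p),\Kcal_{\alpha,\bsgamma})\le b^{-m/\lambda}T_{\lambda,s}^{1/\lambda}$ whenever $n\ge\alpha m/2$. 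Since $\hat{e}^2(P_{0,s},\Kcal_{\alpha,\bsgamma})=\gamma_{\emptyset}$, the defining requirement $\hat{e}(P_{b^m,s}(\bsq,p),\Kcal_{\alpha,\bsgamma})\le\varepsilon\,\hat{e}(P_{0,s},\Kcal_{\alpha,\bsgamma})$ becomes $b^{-m/\lambda}T_{\lambda,s}^{1/\lambda}\le\varepsilon^{2}\gamma_{\emptyset}$.

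First I would solve this inequality for $m$. Raising both sides to the power $\lambda$ (which preserves the inequality since $\lambda>0$) and rearranging gives the equivalent condition $b^{m}\ge T_{\lambda,s}\,\gamma_{\emptyset}^{-\lambda}\varepsilon^{-2\lambda}$. Taking $m$ to be the least positive integer satisfying this, and choosing the digit precision as $n=\lceil\alpha m/2\rceil$ so that the hypothesis $n\ge\alpha m/2$ of the existence bound is met, yields $b^{m}\le b\cdot T_{\lambda,s}\,\gamma_{\emptyset}^{-\lambda}\varepsilon^{-2\lambda}$. Hence for every admissible $\lambda\in(1/(2\alpha),1]$ we obtain the uniform estimate
\begin{equation*}
N(\varepsilon,s)\le b\,\gamma_{\emptyset}^{-\lambda}\,\varepsilon^{-2\lambda}\,T_{\lambda,s}.
\end{equation*}

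With this estimate in hand, both assertions follow by controlling $T_{\lambda,s}$ through $G_{\lambda,a}$. For Item~1, the assumption $G_{\lambda,0}<\infty$ for some $\lambda$ means $\limsup_{s\to\infty}T_{\lambda,s}<\infty$; since the finitely many initial values of $T_{\lambda,s}$ are in any case finite, $C_{1}:=\sup_{s}T_{\lambda,s}<\infty$, and the displayed estimate gives $N(\varepsilon,s)\le b\,C_{1}\,\gamma_{\emptyset}^{-\lambda}\varepsilon^{-2\lambda}$, a bound independent of $s$. For Item~2, the assumption $G_{\lambda,a}<\infty$ for some $\lambda$ and $a>0$ means $\limsup_{s\to\infty}s^{-a}T_{\lambda,s}<\infty$, hence $T_{\lambda,s}\le C_{2}\,s^{a}$ for a constant $C_{2}$ (enlarging $C_{2}$ if necessary to absorb the finitely many small $s$), and the estimate gives $N(\varepsilon,s)\le b\,C_{2}\,\gamma_{\emptyset}^{-\lambda}\varepsilon^{-2\lambda}\,s^{a}$, which is polynomial in $s$.

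The argument is essentially bookkeeping, so I do not anticipate a genuine obstacle; the only point requiring care is the interplay between the $\inf_{m}$ and the existential quantifier over $\lambda$ in the definition of $N(\varepsilon,s)$. Because the boxed estimate holds for each fixed admissible $\lambda$ separately, it suffices to exhibit a single $\lambda$ for which $G_{\lambda,0}$ (resp.\ $G_{\lambda,a}$) is finite, which is exactly the hypothesis. One should also record that the rounding step costs only the harmless factor $b$, and that the choice $n=\lceil\alpha m/2\rceil$ respects $m\le n$, since $\alpha\ge 2$ forces $\alpha m/2\ge m$; thus no conflict with the admissibility constraints of the existence bound arises.
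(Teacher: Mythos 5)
Your proposal is correct and follows exactly the route the paper intends: the paper omits the proof, pointing to the argument of \cite[Theorem~5.2]{DP07}, and that argument is precisely your bookkeeping --- fix a $\lambda$ with $G_{\lambda,a}<\infty$, take the least $m$ with $b^{m}\ge T_{\lambda,s}\,\gamma_{\emptyset}^{-\lambda}\varepsilon^{-2\lambda}$ (with $n=\lceil\alpha m/2\rceil$, which indeed satisfies $m\le n$ since $\alpha\ge 2$), and absorb the $\limsup$ into a uniform constant. The only cosmetic refinement worth recording is that in the degenerate case $T_{\lambda,s}\,\gamma_{\emptyset}^{-\lambda}\varepsilon^{-2\lambda}<1$ the least admissible $m\in\nat$ is $m=1$, so the clean statement of your estimate is $N(\varepsilon,s)\le b\max\bigl(1,\;\gamma_{\emptyset}^{-\lambda}\varepsilon^{-2\lambda}T_{\lambda,s}\bigr)$, which affects neither conclusion.
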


%%%%%%%%%%%%%%%%%%%%%%%%%%%%%%%%%%%%%%%%%%%%%%%%%%%%%%%%%%%

\end{document}